\newcommand{\real}{\ensuremath{\mathbb{R}}}
\newcommand{\dist}{\mathrm{dist}}
\newcommand{\hessianlip}{L_2}
\newcommand{\discretized}{\hat}
\newcommand{\interpolated}{\tilde}
\newcommand{\stepsize}{\eta}
\newcommand{\unicon}{c}
\newcommand{\smooth}{L_1}
\newcommand{\defn}{:=}
\newcommand{\matsnorm}[2]{|\!|\!| #1 | \! | \!|_{{#2}}}
\newcommand{\vecnorm}[2]{\left\| #1\right\|_{#2}}
\newcommand{\opnorm}[1]{\ensuremath{\matsnorm{#1}{\tiny{\mbox{op}}}}}
\newcommand{\inprod}[2]{\ensuremath{\langle #1 , \, #2 \rangle}}
\newcommand{\kull}[2]{\ensuremath{D_{\text{KL}}(#1 \| #2)}}
\newcommand{\Exs}{\ensuremath{{\mathbb{E}}}}
\newtheoremstyle{named}{}{}{\itshape}{}{\bfseries}{.}{.5em}{\thmnote{#3's }#1}
\theoremstyle{named}
\theoremstyle{plain}
\newtheorem{theorem}{Theorem}
\newtheorem{proposition}{Proposition}
\newtheorem{lemma}{Lemma}
\newtheorem{corollary}{Corollary}
\newtheorem{remark}{Remark}
\newlength{\widebarargwidth}
\newlength{\widebarargheight}
\newlength{\widebarargdepth}
\long\def\@makecaption#1#2{
        \vskip 0.8ex
        \setbox\@tempboxa\hbox{\small {\bf #1:} #2}
        \parindent 1.5em  
        \dimen0=\hsize
        \advance\dimen0 by -3em
        \ifdim \wd\@tempboxa >\dimen0
                \hbox to \hsize{
                        \parindent 0em
                        \hfil
                        \parbox{\dimen0}{\def\baselinestretch{0.96}\small
                                {\bf #1.} #2
                                }
                        \hfil}
        \else \hbox to \hsize{\hfil \box\@tempboxa \hfil}
        \fi
        }
\long\def\comment#1{}
\definecolor{battleshipgrey}{rgb}{0.52, 0.52, 0.51}
\definecolor{darkgray}{rgb}{0.66, 0.66, 0.66}
\definecolor{darkgreen}{rgb}{0.0, 0.2, 0.13}
\definecolor{darkspringgreen}{rgb}{0.09, 0.45, 0.27}
\definecolor{dukeblue}{rgb}{0.0, 0.0, 0.61}
\definecolor{olivedrab7}{rgb}{0.24, 0.2, 0.12}
\definecolor{darkblue}{rgb}{0.0, 0.0, 0.55}
\definecolor{darkscarlet}{rgb}{0.34, 0.01, 0.1}
\definecolor{candyapplered}{rgb}{1.0, 0.03, 0.0}
\definecolor{ao(english)}{rgb}{0.0, 0.5, 0.0}
\definecolor{applegreen}{rgb}{0.55, 0.71, 0.0}
\renewcommand*{\backrefalt}[4]{%
    \ifcase #1 \footnotesize{(Not cited.)}%
    \or        \footnotesize{(Cited on page~#2.)}%
    \else      \footnotesize{(Cited on pages~#2.)}%
    \fi}
\newtheorem{assumption}{Assumption}
\newcommand{\Wass}{\ensuremath{\mathcal{W}}}
\long\def\@makecaption#1#2{
        \vskip 0.8ex
        \setbox\@tempboxa\hbox{\small {\bf #1:} #2}
        \parindent 1.5em  
        \dimen0=\hsize
        \advance\dimen0 by -3em
        \ifdim \wd\@tempboxa >\dimen0
                \hbox to \hsize{
                        \parindent 0em
                        \hfil 
                        \parbox{\dimen0}{\def\baselinestretch{0.96}\small
                                {\bf #1.} #2
                                } 
                        \hfil}
        \else \hbox to \hsize{\hfil \box\@tempboxa \hfil}
        \fi
        }
\begin{document}

\begin{center}
{\bf{\LARGE{Improved Bounds for Discretization of Langevin Diffusions: Near-Optimal Rates without Convexity}}}

\vspace*{.2in}
 \large{
 \begin{tabular}{cccc}
  Wenlong Mou$^{ \diamond}$ & Nicolas Flammarion$^{ \diamond}$ &
  Martin J. Wainwright$^{\dagger, \diamond, \ddagger}$ &Peter L.
  Bartlett$^{\diamond, \dagger}$
 \end{tabular}

}

\vspace*{.2in}

 \begin{tabular}{c}
 Department of Electrical Engineering and Computer Sciences$^\diamond$\\
 Department of Statistics$^\dagger$ \\
 UC Berkeley\\
 \end{tabular}

 \vspace*{.1in}
 \begin{tabular}{c}
 The Voleon Group$^\ddagger$
 \end{tabular}

\vspace*{.2in}

\today

\vspace*{.2in}

\begin{abstract}%
  We present an improved analysis of the Euler-Maruyama discretization
  of the Langevin diffusion. Our analysis does not require global
  contractivity, and yields polynomial dependence on the time horizon.
  Compared to existing approaches, we make an additional smoothness
  assumption, and improve the existing rate from $O(\eta)$ to
  $O(\eta^2)$ in terms of the KL divergence. This result matches
  the correct order for numerical SDEs, without suffering from
  exponential time dependence.  When applied to algorithms for
  sampling and learning, this result simultaneously improves all those
  methods based on Dalayan's approach.
\end{abstract}

\end{center}

\section{Introduction}

In recent years, the machine learning and statistics communities have
witnessed a surge of interest in the Langevin diffusion process, and
its connections to stochastic algorithms for sampling and
optimization.  The Langevin diffusion in $\real^d$ is defined via the
It\^o stochastic differential equation (SDE)
\begin{align}
\label{eq:langevin-equation}
 d X_t = b(X_t) dt + dB_t,
\end{align}
where $B_t$ is a standard $d$-dimensional Brownian motion, and the
function $b: \real^d \to \real^d$ is known as the drift term.  For a
drift term of the form $b(x) = -\frac{1}{2}\nabla U(x)$ for some
differentiable function $U: \real^d \rightarrow \real$, the Langevin
process~\eqref{eq:langevin-equation} has stationary distribution with
density $\gamma(x) \propto e^{-U(x)}$; moreover, under mild growth
conditions on $U$, the diffusion converges to this stationary
distribution as $t \rightarrow \infty$. See~\cite{Pav14} for more
background on these facts, which underlie the development of sampling
algorithms based on discretizations of the Langevin diffusion.
Diffusive processes of this nature also play an important role in
understanding stochastic optimization; in this context, the Gaussian
noise helps escaping shallow local minima and saddle points in finite
time, making it especially useful for non-convex optimization.  From a
theoretical point of view, the continuous-time process is attractive
to analyze, amenable to a range of tools coming from stochastic
calculus and Brownian motion theory~\citep{MR1725357}. However, in
practice, an algorithm can only run in discrete time, so that the
understanding of discretized versions of the Langevin diffusion is
very important.

The discretization of SDEs is a central topic in the field of
scientific computation, with a wide variety of schemes proposed and
studied~\citep{platen,higham2001algorithmic}.  The most commonly used
discretization is the Euler-Maruyama discretization: parameterized by
a step size $\stepsize > 0$, it is defined by the recursion
\begin{align}
\label{eq:langevin-euler}
\discretized{X}_{(k+1)\stepsize} = \discretized{X}_{k \stepsize} + \stepsize b(\discretized{X}_{k \stepsize}) +
\sqrt{\stepsize} \xi_k, \quad \mbox{for $k= 0, 1, 2, \ldots$.}
\end{align}
Here the sequence $\{\xi_k \}_{k=1}^{+\infty}$ is formed of
$\mathrm{i.i.d.}$ $d$-dimensional standard Gaussian random vectors.

From past work, the Euler-Murayama scheme is known to have first-order
accuracy under appropriate smoothness conditions.  In particular, the
Wasserstein distances $\Wass_p$ for $p \geq 1$
between the original Langevin
diffusion and the discretized version decays as $O(\stepsize)$ as
$\stepsize$ decays to zero, with the dimension $d$ and time horizon
$T$ captured in the order notation~\citep[see,
  e.g.,][]{alfonsi2014optimal}.  When the underlying dependence on the
time horizon $T$ is explicitly calculated, it can grow exponentially,
due to the underlying \mbox{Gr\"{o}nwall} inequality. If the potential
$U$ is both suitably smooth and strongly convex, then the scaling with
$\stepsize$ remains first-order, and the bound becomes independent of
time $T$~\citep{durmus2017nonasymptotic,dalalyan2017user}. These
bounds, in conjunction with the coupling method, have been used to
bound the mixing time of the unadjusted Langevin algorithm (ULA) for
sampling from strongly-log-concave densities. Moreover, this bound
aligns well with the classical theory of discretization for ordinary
differential equations (ODEs), where finite-time discretization error
may suffer from bad dependence on $T$, and either contraction
assumptions or symplectic structures are needed in order to control
long-time behavior~\citep{iserles2009first}.

On the surface, it might seem that SDEs pose greater numerical
challenges than ODEs; however, the presence of randomness actually has
been shown to help in the long-term behavior of discretization.
\citet{dalalyan2017theoretical} showed that the pathwise
Kullback-Leibler (KL) divergence between the original Langevin
diffusion~\eqref{eq:langevin-equation} and the Euler-Maruyma
discretization~\eqref{eq:langevin-euler} is bounded as $O(\stepsize
T)$ with only smoothness conditions. This result enables comparison of
the discretization with the original diffusion over long time
intervals, even without contraction.  The discretization techniques
of~\citet{dalalyan2017theoretical} serve as a foundation for a number
of recent papers on sampling and non-convex learning, including the
papers~\citep{raginsky2017non,tzen2018local,liang2017statistical}.

On the other hand, this $O(\stepsize)$ bound on the KL error is likely to
be loose in general. Under suitable smoothness conditions, standard
transportation
inequalities~\citep{bolley2005weighted}
guarantee that such a KL bound can be translated into a
$O(\sqrt{\stepsize})$-bound in Wasserstein distance. Yet, as mentioned in
the previous paragraph, the Wasserstein rate should be $O(\stepsize)$ under
enough smoothness assumption. This latter result either requires
assuming contraction or leads to exponential time dependence, leading
naturally to the question: can we achieve best of both worlds?
That is, is it possible to prove a $O(\stepsize T)$ Wasserstein bound
without convexity or other contractivity conditions?

\paragraph{Our contributions:}
In this paper, we answer the preceding question in the affirmative:
more precisely, we close the gap between the correct rate for the
Euler-Maruyama method and the linear dependence on time horizon,
without any contractivity assumptions. As long as the drift term
satisfies certain first and second-order smoothness, as well as growth
conditions at far distance, we show the KL divergence between marginal
distributions of equation~\eqref{eq:langevin-equation} and
equation~\eqref{eq:langevin-euler}, at any time $T$, is bounded as
$O(\stepsize^2 d^2 T)$. Note that this bound is non-asymptotic, with
polynomial dependence on all the smoothness parameters, and linear
dependence on $T$.  As a corollary of this improved discretization
bound, we give improved bounds for using the unadjusted Langevin
algorithm (ULA) for sampling from a distribution satisfying a
log-Sobolev inequality.  In addition, our improved discretization
bound improves a number of previous results on non-convex optimization
and inference, all of which are based on the discretized Langevin
diffusion.

In the proof of our main theorem, we introduce a number of new
techniques.  A central challenge is how to study the evolution of time
marginals of the interpolation of discrete-time Euler algorithm, and
in order to do so, we derive a Fokker-Planck equation for the
interpolated process, where the drift term is the backward conditional
expectation of $b$ at the previous step, conditioned on the current
value of $x$. The difference between this new drift term for the
interpolated process and $b$ itself can be much smaller than the
difference between $b$ at two time points.  Indeed, taking the
conditional expectation cancels out the bulk of the noise terms,
assuming the density from the previous step is smooth enough. We
capture the smoothness of density at the previous step by its Fisher
information, and develop De Bruijn-type inequalities to control the
Fisher information along the path. Combining this regularity estimate
with suitable tail bounds leads to our main result. We suspect that
our analysis of this interpolated process and associated techniques
for regularity estimates may be of independent interest.


\begin{table}[t]
  \centering {\small{
    \begin{tabular}{|c|c|c|c|c|c|}
    \hline Paper& \stackanchor{Require}{contraction}& \footnote{We
      only listed time horizon dependence for methods that guarantee
      discretization error between continuous-time and discrete-time
      for any time. If the proof requires mixing and does not give the
      difference between the one-time distributions, we mark it as
      ``-''.}Time $T$& \footnote{The distances are measured in
      $\Wass_p$. If the original bound is shown for
      KL, it is transformed into $\Wass_p$ using transportation inequalities,
      resulting in the same rate. We mark with * if the original
      bound was shown in KL}
       Step size $\stepsize$ &
    \stackanchor{Require}{mixing} &
    \stackanchor{Additional}{assumptions} \\ \hline
    \cite{dalalyan2017theoretical}& No& $O(\sqrt{T})$&$O(\sqrt{\stepsize})$
    &No&None \\ \hline
    \cite{alfonsi2014optimal}&No&$O(e^{cT})$&$O(\stepsize)$&No&\stackanchor{Second-order}{smooth
      drift}\\ \hline
    \stackanchor{\cite{dalalyan2017user},}{\cite{durmus2017nonasymptotic}}&Yes&
    -&$O(\stepsize)$&Yes&\stackanchor{Second-order}{smooth drift}\\ \hline
    \stackanchor{\cite{cheng2017convergence},}{\cite{ma2018sampling}}&No&-&$O(\sqrt{\stepsize})$
    &Yes&\stackanchor{strong convexity}{outside a ball}\\ \hline
    \stackanchor{\cite{cheng2018sharp},}{\cite{bou2018coupling}}\footnote{For
      Hamiltonian Monte-Carlo, which is based on discretization of ODE, instead of SDE.}&No&-&$O(\stepsize)$&Yes&\stackanchor{strong
      convexity}{outside a ball}\\ \hline This
    paper&No&$O(\sqrt{T})$&$O(\stepsize)$
    &No&\stackanchor{Second-order}{smooth drift}\\ \hline
    \end{tabular}
    }}
    \caption{Comparison between discretization of Langevin diffusion
      and sampling algorithms.}
    \label{tab:comparison}
\end{table}

\paragraph{Related work:} Recent years have witnessed a flurry of activity in
statistics and machine learning on the Langevin diffusion and related
stochastic processes.  A standard application is sampling from a
density of the form $\gamma(x) \propto e^{-U(x)}$ based on an oracle
that returns the pair $( U(x), \nabla U(x))$ for any query point $x$.
In the log-concave case, algorithms for sampling under this model are
relatively well-understood, with various methods for discretization
and variants of Langevin diffusion proposed in order to refine the
dependence on dimension, accuracy level and condition
number~\citep{dalalyan2017theoretical,durmus2017nonasymptotic,cheng2017underdamped,lee2018algorithmic,mangoubi2018dimensionally,dwivedi2018log}.

When the potential function $U$ is non-convex, the analysis of
continuous-time convergence and the discretization error analysis both
become much more involved. When the potential satisfies a logarithmic
Sobolev inequalities, continuous-time convergence rates can be
established~\citep[see e.g.][]{markowich2000trend}, and these
guarantees have been leveraged for sampling
algorithms~\citep{bernton2018langevin,wibisono2018sampling,
  ma2018sampling}. Coupling-based results for the Wasserstein distance
$\Wass_2$ have also been shown for variants of Langevin
diffusion~\citep{cheng2018sharp,bou2018coupling}. Beyond sampling, the
global convergence nature of Langevin diffusion has been used in
non-convex optimization, since the stationary distribution is
concentrated around global minima. Langevin-based optimization
algorithms have been studied under log-Sobolev
inequalities~\citep{raginsky2017non}, bounds on the Stein
factor~\citep{erdogdu2018global}; in addition, accelerated methods
have been studied~\citep{chen2018accelerating}.  The dynamics of
Langevin algorithms have also been studied without convergence to
stationarity, including exiting times~\citep{tzen2018local}, hitting
times~\citep{zhang2017hitting}, exploration of a
basin-of-attraction~\citep{lee2018beyond}, and statistical inference
using the path~\citep{liang2017statistical}. Most of the works in
non-convex setting are based on the discretization methods introduced
by~\citet{dalalyan2017theoretical}.

Finally, in a concurrent and independent line of work,
\citet{fang2018multilevel} also studied a multi-level sampling
algorithm without imposing a contraction condition, and obtained
bounds for the mean-squared error; however, their results do not give
explicit dependence on problem parameters.  Since the proofs involve
bounding the moments of Radon-Nikodym derivative, their results may be
exponential in dimension, as opposed to the polynomial-dependence
given here.

\paragraph{Notation:} We let $\Vert x \Vert_2$ denote the Euclidean norm of
a vector $x \in \real^d$. For a matrix $M$ we let $\opnorm{M}$ denote
its spectral norm. For a function $b: \real^d \rightarrow \real^d$, we
let $\nabla b(x) \in \real^{d \times d}$ denote its Jacobian evaluated
at $x$. We use $\mathcal{L}(X)$ to denote the law of random variable
$X$. We define the constant $A_0=\Vert b(0) \Vert_2$. When the
variable of the integrand is not explicitly written, integrals are
taking with respect to the Lebesgue measure: in particular, for an
integrable function $g: \real^d \rightarrow \real$, we use $\int g$ as
a shorthand for $\int_{\real^d}g(x)dx $. For a probability density function $p$ (with respect to the Lebesgue measure) in $\real^d$, we let $H (p)$ to denote the differential entropy of $p$.


\section{Main Results}
\label{SecMain}

We now turn to our main results, beginning with our assumptions and a
statement of our main theorem.  We then develop and discuss a number
of corollaries of these main results.

\subsection{Statement of main results}

Our main results involve three conditions on the drift term $b$, and
one on the initialization:
\begin{assumption}[Lipschitz drift term]
\label{assume-lipschitz-drift}
There is a finite constant $\smooth$ such that
\begin{align}
  \Vert b(x)-b(y)\Vert_2 \leq \smooth \Vert x-y \Vert_2 \quad
  \mbox{for all $x,y \in \real^d$.}
\end{align}
\end{assumption}
\begin{assumption}[Smooth drift term]\label{assume-smooth-drift}
There is a finite constant $\hessianlip$ such that
\begin{align}
  \opnorm{\nabla b(x) - \nabla b(y)}\leq \hessianlip \Vert x-y \Vert_2 \quad
  \mbox{for all $x,y \in \real^d$.}
\end{align}
\end{assumption}
\begin{assumption}[Distant dissipativity]
  \label{assume-strong-dissipative}
  There exist strictly positive constants $\mu, \beta$ such that
  \begin{align}
    \langle b(x),x \rangle \leq -\mu \Vert x \Vert_2^2+\beta \quad
    \mbox{for all $x \in \real^d$.}
  \end{align}
\end{assumption}
\begin{assumption}[Smooth Initialization]
  \label{assume-smooth-initialize}  
  The initializations $X_0$ and $\discretized{X}_0$, for the
  processes~\eqref{eq:langevin-equation} and~\eqref{eq:langevin-euler}
  respectively, are drawn from a density $\pi_0$ such that
  \begin{align}
    \label{eq:smooth-initialize}
    -\log \pi_0(x)\leq h_0+\frac{\Vert x \Vert_2^2}{\sigma_0^2} \quad
    \mbox{for all $x \in \real^d$.}
  \end{align}
\end{assumption}
\noindent Note that no contractivity assumption on the drift term $b$
is imposed. Rather, we use the notion of distant dissipativity, which
is substantially weaker; even this assumption is relaxed in
Theorem~\ref{ThmWeakAssumption}. The initialization
condition~\eqref{eq:smooth-initialize} is clearly satisfied by the
standard Gaussian density, but
Assumption~\ref{assume-smooth-initialize} allows for other densities
with quadratic tail behavior. \\

\noindent With these definitions, the main result of this paper is the
following:
\begin{theorem}
\label{ThmMain}
Consider the original Langevin diffusion~\eqref{eq:langevin-equation}
under Assumptions~\ref{assume-lipschitz-drift},
~\ref{assume-smooth-drift}, ~\ref{assume-strong-dissipative}
and~\ref{assume-smooth-initialize}.  Then there are universal
constants $(\unicon_0, \unicon_1)$ such that for any $\stepsize \in
(0, \frac{1}{2 \smooth})$ and all times $T > 0$, the KL error of the
Euler-Maruyama discretization~\eqref{eq:langevin-euler} is bounded as
\begin{multline}
  \label{EqnMainBound}
    \kull{\discretized{\pi}_{T} }{ \pi_T } \leq \unicon_0 \stepsize^2
    \bigg( h_0 + H(\pi_0) + A_0^2 + \Big( \sigma_0^2 d + \frac{\beta + d }{\mu}
    \Big) \Big( \frac{1}{\sigma_0^{2}} + T \smooth^2\Big) + T
    \hessianlip^2 d^2 \bigg) \\
+ \unicon_1 \stepsize^4 \hessianlip^2 \bigg(A_0^4 + \smooth^4
\Big(\sigma_0^2 d + \frac{(\beta+d)^2}{\mu} + d^2 \Big)\bigg).
  \end{multline}

\end{theorem}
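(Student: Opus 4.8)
The plan is to compare the Euler--Maruyama recursion to the diffusion through a continuous-time interpolation driven by the \emph{same} Brownian motion, followed by an entropy-dissipation argument. First I would introduce the piecewise interpolation $\interpolated{X}_t := \discretized{X}_{k\stepsize} + (t-k\stepsize)\,b(\discretized{X}_{k\stepsize}) + (B_t - B_{k\stepsize})$ for $t\in[k\stepsize,(k+1)\stepsize]$, so that $\interpolated{X}_{k\stepsize}=\discretized{X}_{k\stepsize}$ and $\mathcal{L}(\interpolated{X}_T)=\discretized\pi_T$. Since the drift $b(\discretized X_{k\stepsize})$ of this process is not adapted to the current state, I would pass to the Fokker--Planck equation for its time-marginal $\discretized\pi_t$, whose effective drift is the \emph{backward conditional expectation} $b_t(x):=\Exs\brackets{b(\interpolated X_{k\stepsize})\mid \interpolated X_t=x}$ for $t\in[k\stepsize,(k+1)\stepsize)$; that is, $\partial_t\discretized\pi_t = -\nabla\cdot(b_t\,\discretized\pi_t)+\tfrac12\Delta\discretized\pi_t$, to be compared with $\partial_t\pi_t = -\nabla\cdot(b\,\pi_t)+\tfrac12\Delta\pi_t$.

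With both Fokker--Planck equations in hand, I would differentiate $D(t):=\kull{\discretized\pi_t}{\pi_t}$ in time. After integration by parts, the $\tfrac12\Delta$ terms cancel against the heat-flow part of the entropy production, leaving $\frac{d}{dt}D(t) = \int \discretized\pi_t\,\inprod{b_t-b}{\nabla\log(\discretized\pi_t/\pi_t)} - \tfrac12\int\discretized\pi_t\vecnorm{\nabla\log(\discretized\pi_t/\pi_t)}{2}^2$. Discarding the (nonpositive) dissipation term and applying Young's inequality yields the clean bound $\frac{d}{dt}D(t)\le \tfrac12\,\Exs\vecnorm{b_t(\interpolated X_t)-b(\interpolated X_t)}{2}^2$, with $D(0)=0$ since $\discretized\pi_0=\pi_0$. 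Integrating over $[0,T]$ reduces the whole problem to a uniform-in-time estimate of the one-step drift mismatch $\Exs\vecnorm{b_t(\interpolated X_t)-b(\interpolated X_t)}{2}^2$, weighted by the elapsed-time factor $s_t := t - \stepsize\lfloor t/\stepsize\rfloor \le \stepsize$ that a Taylor expansion will produce.

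The heart of the argument is to show this drift mismatch is genuinely \emph{second order} in the step size. Write $x=\interpolated X_t$, $Y=\interpolated X_{k\stepsize}$, $W=B_t-B_{k\stepsize}\sim\NORMAL(0,s_t I)$ (independent of $Y$), so that $x = Y + s_t b(Y) + W$. A first-order Taylor expansion of $b$ around $x$ together with Assumption~\ref{assume-smooth-drift} gives $b(Y)-b(x) = -\nabla b(x)\parenth{s_t b(Y)+W} + \interpolated R$ with $\vecnorm{\interpolated R}{2}\le \tfrac{\hessianlip}{2}\vecnorm{s_t b(Y)+W}{2}^2$. Taking $\Exs\brackets{\cdot\mid x}$ and using the Tweedie-type identity $\Exs\brackets{W\mid \interpolated X_t = x}=-s_t\nabla\log\discretized\pi_t(x)$ (valid because $\interpolated X_t$ is the $\NORMAL(0,s_t I)$-convolution of the push-forward of $\discretized\pi_{k\stepsize}$ under $y\mapsto y+s_tb(y)$), then inverting the well-conditioned matrix $I+s_t\nabla b(x)$ (here $s_t\smooth\le\tfrac12$), yields the pointwise estimate
\begin{align*}
\vecnorm{b_t(x)-b(x)}{2} \;\lesssim\; s_t\smooth\parenth{\vecnorm{b(x)}{2}+\vecnorm{\nabla\log\discretized\pi_t(x)}{2}} + \hessianlip\,\Exs\brackets{\vecnorm{s_t b(Y)+W}{2}^2\,\big|\,x}.
\end{align*}
The crucial point is that conditioning on $x$ cancels the bulk of the increment $W$, so its residual first-order effect is captured by $\vecnorm{\nabla\log\discretized\pi_t(x)}{2}$, whose $L^2(\discretized\pi_t)$-norm is exactly the Fisher information $\Fishermatrix(\discretized\pi_t):=\int\discretized\pi_t\vecnorm{\nabla\log\discretized\pi_t}{2}^2$ of the interpolated marginal --- this is the mechanism producing the extra factor of $\stepsize$.

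It then remains to supply two families of a priori bounds, uniform in $t$. The first is a set of moment bounds $\sup_t\Exs\vecnorm{\interpolated X_t}{2}^2$ and $\sup_t\Exs\vecnorm{\interpolated X_t}{2}^4$ (hence for $\Exs\vecnorm{b(\interpolated X_t)}{2}^2$ and $\Exs\vecnorm{b(\interpolated X_t)}{2}^4$), obtained by a routine Lyapunov/It\^o argument from the distant-dissipativity Assumption~\ref{assume-strong-dissipative} and the quadratic-tail initialization Assumption~\ref{assume-smooth-initialize}; these contribute the $\sigma_0^2 d + (\beta+d)/\mu$ factors and their squares, while the $\hessianlip^2\,\Exs\vecnorm{W}{2}^4\lesssim \hessianlip^2 s_t^2 d^2$ piece of the remainder produces the $T\hessianlip^2 d^2$ term. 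The second and far more delicate estimate is a uniform bound on the Fisher information $\Fishermatrix(\discretized\pi_t)$ along the interpolated flow. For this I would derive a De Bruijn--type differential inequality: differentiating $t\mapsto\Fishermatrix(\discretized\pi_t)$ along $\partial_t\discretized\pi_t=-\nabla\cdot(b_t\discretized\pi_t)+\tfrac12\Delta\discretized\pi_t$ produces a strictly negative dissipation term $-\int\discretized\pi_t\opnorm{\nabla^2\log\discretized\pi_t}^2$ plus cross terms involving $\nabla b_t$; after bounding $\nabla b_t$ in terms of $\smooth$ and $\Fishermatrix(\discretized\pi_t)$ itself (using that $b_t$ is a conditional expectation of an $\smooth$-Lipschitz field against a smooth density) and absorbing into the dissipation, one reaches an inequality of the form $\frac{d}{dt}\Fishermatrix(\discretized\pi_t)\le -c\,\Fishermatrix(\discretized\pi_t)^2 + (\text{bounded drift term})$, so $\Fishermatrix(\discretized\pi_t)$ stays below a fixed level set by $\Fishermatrix(\pi_0)$ --- controlled via Assumption~\ref{assume-smooth-initialize}, which is where the $h_0+H(\pi_0)$ and $1/\sigma_0^2$ contributions enter --- with the Gaussian smoothing of the Brownian step ($\Fishermatrix(\discretized\pi_t)\le d/s_t$ on the first subinterval, tamed by the $s_t^2$ weight) keeping things finite even for a rough $\pi_0$. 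This Fisher-information control is the main obstacle, precisely because the effective drift $b_t$ is a conditional expectation rather than a gradient field, so bounding $\nabla b_t$ and making the De Bruijn computation rigorous (including the first subinterval) is the technically demanding step. Finally, substituting the moment and Fisher-information bounds into the pointwise estimate, integrating $\frac{d}{dt}D(t)\le\tfrac12\Exs\vecnorm{b_t(\interpolated X_t)-b(\interpolated X_t)}{2}^2$ over $[0,T]$ using $\int_0^T s_t^2\,dt\le\tfrac13\stepsize^2 T$ and $\int_0^T s_t^4\,dt\le\tfrac15\stepsize^4 T$, and collecting the resulting $O(\stepsize^2)$ terms (with the leftover higher-order remainder pieces giving the $O(\stepsize^4)$ correction), produces the claimed bound~\eqref{EqnMainBound}.
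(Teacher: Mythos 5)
Your Steps 1--2 track the paper closely and are sound: the same interpolation, the same Fokker--Planck equation with the backward conditional drift $\discretized{b}_t$, the bound $\frac{d}{dt}\kull{\discretized{\pi}_t}{\pi_t}\le\frac12\Exs\Vert \discretized{b}_t(\discretized{X}_t)-b(\discretized{X}_t)\Vert_2^2$, and a Taylor expansion in which the conditional mean of the Brownian increment is traded for a score term. Your Tweedie identity $\Exs[W\mid \discretized{X}_t=x]=-s_t\nabla\log\discretized{\pi}_t(x)$ is a legitimate variant of the paper's integration-by-parts (which instead produces $-(t-k\stepsize)\,\Exs[\nabla\log\discretized{\pi}_{k\stepsize}(\discretized{X}_{k\stepsize})\mid \discretized{X}_t=x]$, i.e.\ the score at the previous grid point), and the $I_2$/$I_3$/remainder-type terms are handled the same way.

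The genuine gap is in your Step 3, the Fisher-information control, which is exactly the step you flag as ``technically demanding'' but then only assert. You propose a \emph{uniform-in-time} bound on $\Fishermatrix(\discretized{\pi}_t)$ via a differential inequality $\frac{d}{dt}\Fishermatrix(\discretized{\pi}_t)\le -c\,\Fishermatrix(\discretized{\pi}_t)^2+C$, which requires bounding $\nabla \discretized{b}_t$ ``in terms of $\smooth$ and $\Fishermatrix(\discretized{\pi}_t)$.'' This is not available: $\discretized{b}_t$ is a backward conditional expectation, and its Jacobian contains a conditional-covariance term of order $\frac{1}{s_t}\,\mathrm{Cov}\bigl(b(\discretized{X}_{k\stepsize}),\,W\mid \discretized{X}_t=x\bigr)$, which is not bounded by $\smooth$ and degenerates like $1/s_t$ at the start of each subinterval; closing your ODE (with the right dimension dependence, since the heat-flow dissipation only yields $-\frac{c}{d}\Fishermatrix^2$) is an unresolved piece of analysis, not a routine one. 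Moreover, your claim that $\Fishermatrix(\discretized{\pi}_t)$ ``stays below a level set by $\Fishermatrix(\pi_0)$'' appeals to a quantity that Assumption~\ref{assume-smooth-initialize} does not control (it bounds $-\log\pi_0$ pointwise, hence entropy-type quantities $h_0$, $H(\pi_0)$, but $\Fishermatrix(\pi_0)$ may be infinite). The paper avoids all of this: since the KL derivative is integrated over $[0,T]$ anyway, it only needs a \emph{time-averaged} Fisher bound, obtained from a short De Bruijn/entropy identity (Lemma~\ref{lemma-regularity-time-integral}) bounding $\int_0^T\Fishermatrix(\discretized{\pi}_t)\,dt$ by $h_0+H(\pi_0)$ and second moments, together with a one-step grid comparison (Lemma~\ref{lemma-regularity-relative-difference}) that uses monotonicity of Fisher information under the heat semigroup and a change of variables under the Euler map, yielding Proposition~\ref{prop-fisher-grid}. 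Replacing your uniform bound by this time-integrated argument (which is compatible with your Tweedie-based decomposition, since $s_t\le\stepsize$ lets you pull the weight out of $\int_0^T s_t^2\,\Fishermatrix(\discretized{\pi}_t)\,dt$) would close the gap; as written, the proposal's central estimate is unproven and rests on an assumption the theorem does not make.
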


If we track only the dependence on $(\stepsize, T, d)$, the
result~\eqref{EqnMainBound} can be summarized as a bound of the form
$\kull{\discretized{\pi}_{T}}{\pi_T} \lesssim \stepsize^2 d^2 T$. This
result should be compared to the $O(\stepsize d T)$ bound obtained
by~\citet{dalalyan2017theoretical} using only
Assumption~\ref{assume-smooth-drift}. It is also worth noticing that
the term $\stepsize^2d^2\hessianlip^2T$ only comes with the third
order derivative bound, which coincides with the Wasserstein distance
result, based on a coupling proof, as obtained
by~\citet{durmus2017nonasymptotic}
and~\citet{dalalyan2017user}. However, these works do not study
separately the discretization error of the discrete process and assume
contractivity.

Note that Assumption~\ref{assume-strong-dissipative} can be
substantially relaxed when the drift is negative gradient of a
function. Essentially, we only require this function to be
non-negative, along with the smoothness assumptions. In such case, we
have the following discretization error bound:

\begin{theorem}
\label{ThmWeakAssumption}
Consider the original Langevin diffusion~\eqref{eq:langevin-equation}
under Assumptions~\ref{assume-lipschitz-drift},
~\ref{assume-smooth-drift}, and~\ref{assume-smooth-initialize}, and
suppose that $b = - \nabla f$ for some non-negative function $f$. Then
for any stepsize $\stepsize \in (0, \frac{1}{2 \smooth})$ and time $T
> 0$, the KL error of the Euler-Maruyama
discretization~\eqref{eq:langevin-euler} is bounded as
\begin{multline}
  \label{EqnMainTwo}
    \kull{\discretized{\pi}_{T} }{ \pi_T } \leq \unicon_0 \stepsize^2
    \bigg( A_0^2 + \Big( \sigma_0^2 d + f(0) + \smooth T \sigma_0^2
    (h_0 + H(\pi_0) + d) \Big) \Big( \frac{1}{\sigma_0^{2}} + T \smooth^2\Big) + T
    \hessianlip^2 d^2 \bigg) \\
+ \unicon_1 \stepsize^4 \hessianlip^2 \bigg(A_0^4 + \smooth^4 \Big(
f(0)^2 + \smooth^2 T^2 \sigma_0^4 (h_0 + d)^2 + \smooth^2 T^4 d^2
\Big)\bigg).
  \end{multline}
\end{theorem}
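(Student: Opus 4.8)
The plan is to re-run the argument behind Theorem~\ref{ThmMain}, changing only the inputs that Assumption~\ref{assume-strong-dissipative} was used to supply. In the dissipative proof, distant dissipativity enters in exactly one place: it furnishes bounds that are \emph{uniform in time} on (a) the second moments $\Exs\enorm{X_t}^2$ and $\Exs\enorm{\discretized{X}_t}^2$ of the diffusion~\eqref{eq:langevin-equation} and of the interpolation of~\eqref{eq:langevin-euler}; (b) the drift energies $\Exs\enorm{b(X_t)}^2$, $\Exs\enorm{b(\discretized{X}_t)}^2$; and (c), through the De Bruijn-type recursion, the Fisher information $J(\discretized{\pi}_t)=\Exs_{\discretized{\pi}_t}\enorm{\nabla\log\discretized{\pi}_t}^2$ of the interpolated time-marginals along the path. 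My replacement uses the elementary fact that $L_1$-smoothness of $f$ together with $f\ge 0$ forces $\enorm{\nabla f(x)}^2\le 2\smooth\, f(x)$ for every $x$ (minimize the quadratic upper model of $f$ centered at $x$ and use non-negativity), so that a bound on $\Exs[f(\cdot)]$ converts directly into one on the drift energy, and the role of the uniform second-moment ball is played instead by a Lyapunov function whose expectation grows at most linearly in time.

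First I would establish the Lyapunov estimates. Along the diffusion, It\^o's formula and $\Delta f\le\smooth d$ (a consequence of Assumption~\ref{assume-lipschitz-drift}) give $\tfrac{d}{dt}\Exs[f(X_t)]=-\Exs\enorm{\nabla f(X_t)}^2+\tfrac12\Exs[\Delta f(X_t)]\le\tfrac{\smooth d}{2}$, hence $\Exs[f(X_t)]\le\Exs[f(X_0)]+\tfrac{\smooth d}{2}t$, from which $\Exs\enorm{b(X_t)}^2\le 2\smooth\Exs[f(X_0)]+\smooth^2 d\,t$ and, integrating the identity, $\int_0^T\Exs\enorm{b(X_t)}^2\,dt\lesssim\smooth\Exs[f(X_0)]\,T+\smooth^2 d\,T^2$; a second-moment computation for $f(X_t)^2$ gives the analogous bound on the fourth drift moment needed for the Taylor remainder. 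For the discretized chain I would use a one-step recursion rather than a continuous-in-$t$ Gr\"onwall estimate: $L_1$-smoothness and the Euler update yield $\Exs[f(\discretized{X}_{(k+1)\stepsize})\mid\discretized{X}_{k\stepsize}]\le f(\discretized{X}_{k\stepsize})-\stepsize(1-\tfrac{\smooth\stepsize}{2})\enorm{\nabla f(\discretized{X}_{k\stepsize})}^2+\tfrac{\smooth\stepsize d}{2}$, so for $\stepsize\in(0,\tfrac1{2\smooth})$ the coefficient $1-\tfrac{\smooth\stepsize}{2}>\tfrac34$ is positive and $\Exs[f(\discretized{X}_{k\stepsize})]\le\Exs[f(X_0)]+\tfrac{\smooth d}{2}k\stepsize$ with a summed gradient bound; the one-step displacement estimate $\Exs\enorm{\discretized{X}_t-\discretized{X}_{k\stepsize}}^2\le\stepsize^2\Exs\enorm{b(\discretized{X}_{k\stepsize})}^2+\stepsize d$ transfers these $t$-linear controls to all $t\in[0,T]$. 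Finally $\Exs[f(X_0)]$ is bounded by expanding $f$ to second order at the origin: $\Exs[f(X_0)]\lesssim f(0)+A_0^2+\smooth\,\sigma_0^2 d$, using $\enorm{\nabla f(0)}=A_0$ and the second-moment scale $\sigma_0^2 d$ afforded by Assumption~\ref{assume-smooth-initialize}; this is the source of the $f(0)$, $A_0$, $\sigma_0$ dependence in~\eqref{EqnMainTwo}.

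Next I would feed these into the Fokker--Planck/Fisher-information part of the proof of Theorem~\ref{ThmMain}. Writing the interpolated marginals as the solution of a Fokker--Planck equation with effective drift $\widehat{b}_t(x)=\Exs[b(\discretized{X}_{k\stepsize})\mid\discretized{X}_t=x]$, the relative-entropy identity between two diffusions with common unit diffusion and the arithmetic--geometric inequality give $\kull{\discretized{\pi}_T}{\pi_T}\le\tfrac12\int_0^T\Exs_{\discretized{\pi}_t}\enorm{\widehat{b}_t(x)-b(x)}^2\,dt$ (the relative Fisher information is non-positive and is discarded, so $T$ enters only through the integral). A Taylor expansion of $b$ around $\discretized{X}_t$ splits $\widehat{b}_t(x)-b(x)=\Exs[b(\discretized{X}_{k\stepsize})-b(\discretized{X}_t)\mid\discretized{X}_t=x]$ into a second-order remainder, bounded by $\tfrac{\hessianlip}{2}\Exs[\enorm{\discretized{X}_{k\stepsize}-\discretized{X}_t}^2\mid x]$ and contributing, after integration, the clean leading term of order $\stepsize^2\hessianlip^2 d^2 T$ plus an $\stepsize^4\hessianlip^2$ term carrying the (now growing) fourth drift moment; and a first-order term $\nabla b(x)\,\Exs[\discretized{X}_{k\stepsize}-\discretized{X}_t\mid x]$, where the conditional Brownian increment is identified via a Tweedie/Gaussian-integration-by-parts step with the score $-\,(t-k\stepsize)\nabla\log\discretized{\pi}_t(x)$, reducing its control to $\smooth^2\stepsize^2\int_0^T J(\discretized{\pi}_t)\,dt$. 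The path Fisher information is then handled by the De Bruijn-type machinery: the integral $\int_0^T J(\discretized{\pi}_t)\,dt$ is bounded via an entropy-difference identity (the heat part produces the self-improving dissipation $\tfrac{d}{dt}J\le-\tfrac1d J^2+\dots$, which beats the $L_1$-drift term), at the cost of the slowly growing second moments entering the source; carrying these through is precisely what upgrades the clean $\stepsize^2 d^2 T$ scaling of Theorem~\ref{ThmMain} to the higher-degree polynomials $\smooth T\sigma_0^2(h_0+H(\pi_0)+d)$ and $\smooth^2 T^2\sigma_0^4(h_0+d)^2+\smooth^2 T^4 d^2$ in~\eqref{EqnMainTwo}, while the $A_0^4$ remainder is structurally the same as in~\eqref{EqnMainBound}.

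The main obstacle I anticipate is exactly this Fisher-information-along-the-path step without a uniform-in-time moment ball: in the dissipative proof one closes the recursion against a fixed second-moment radius $\tfrac{\beta+d}{\mu}$, whereas here the second moment drifts upward at rate $O(\smooth^2 d)$, so one must show $J(\discretized{\pi}_t)$ nonetheless stays controlled on all of $[0,T]$ by playing the entropy-dissipating effect of the Brownian part off against the Lipschitz drift, and then track how the slowly growing moments propagate into the source term — this is what produces the higher powers of $T$ rather than the clean $\stepsize^2 d^2 T$ bound, and it is the only place where the weaker hypothesis genuinely costs something. A secondary technical point, which is routine but must be done carefully, is making the one-step Lyapunov and displacement bookkeeping for the discretized chain uniform over the position of $t$ within a step, which is where the restriction $\stepsize\in(0,\tfrac1{2\smooth})$ is consumed.
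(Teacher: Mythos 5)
Your proposal is correct in substance and reuses the paper's architecture (interpolated Fokker--Planck equation, KL-derivative bound, Taylor split with the conditional expectation inside the norm, De Bruijn-type control of the integrated Fisher information), but it deviates from the paper's proof in two genuine ways, both of which are legitimate. First, for the first-order term you apply Tweedie/Gaussian integration by parts with respect to the \emph{current} marginal, identifying $\Exs[\discretized{X}_{k\stepsize}-\discretized{X}_t\mid \discretized{X}_t=x]$ through $(t-k\stepsize)\nabla\log\discretized{\pi}_t(x)$ minus a conditional drift term; the paper instead integrates by parts against $\discretized{\pi}_{k\stepsize}$ (Lemma~\ref{lemma-i1-regularity}), which is why it then needs the grid-point transfer machinery (Lemmas~\ref{lemma-mimic-forward-euler}--\ref{lemma-non-increasing-fisher-heat} and Lemma~\ref{lemma-regularity-relative-difference}) to convert the time-integrated Fisher information of Lemma~\ref{lemma-regularity-time-integral} into an average over $\{k\stepsize\}$. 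Your version needs only the time integral $\int_0^T\int\discretized{\pi}_t\enorm{\nabla\log\discretized{\pi}_t}^2\,dt$, so if carried out it actually bypasses that transfer step --- a mild simplification. Second, for the moments you use the pointwise inequality $\enorm{\nabla f(x)}^2\le 2\smooth f(x)$ together with a Lyapunov computation, whereas the paper's Lemma~\ref{lemma:gradient-sum-bound-24-nonnegative} telescopes the one-step descent inequality (your conditional-expectation recursion is exactly that inequality) and then feeds the resulting gradient-energy sums into an It\^o/Cauchy--Schwarz bound on $\Exs\enorm{\interpolated{X}_t}^2$ and $\Exs\enorm{\interpolated{X}_t}^4$ (Lemma~\ref{lemma:moment-24-only-non-neg}); the two routes are essentially equivalent and both yield the $T^2$ second-moment and $T^4$ fourth-moment growth that produces the extra powers of $T$ in~\eqref{EqnMainTwo}.

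Two small cautions. Your remark that the one-step displacement bound ``transfers'' the $t$-linear controls is too weak as stated: to bound $\Exs\enorm{\discretized{X}_t}^2$ and $\Exs\enorm{\discretized{X}_t}^4$ themselves you must accumulate the drift over all steps and pay a Cauchy--Schwarz factor of $T$ against the gradient-energy sum (this is precisely what the paper's quadratic-inequality step does), which is where the $T^2$ and $T^4$ terms come from --- your final scalings show you expect this, but the mechanism should be the accumulated bound, not the one-step one. Also, the parenthetical invoking a self-improving dissipation $\frac{d}{dt}J\lesssim-\frac{1}{d}J^2$ is neither needed nor the mechanism used: the correct (and sufficient) tool is the entropy-difference identity you also state, which bounds the time-integrated Fisher information by $h_0+H(\pi_0)+\sigma_0^{-2}\Exs\enorm{\discretized{X}_T}^2$ plus the integrated drift energy, exactly as in Lemma~\ref{lemma-regularity-time-integral}.
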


Once again tracking only the dependence on $(\stepsize, T, d)$, the
bound~\eqref{EqnMainTwo} can be summarized as
$\kull{\discretized{\pi}_T}{\pi_T} \lesssim \stepsize^2 T d(d +
T)$. This bound has weaker dependency on $T$, but it holds for any
non-negative potential function without any growth conditions.

When the problem of sampling from a target distribution $\gamma(x)
\propto e^{-U(x)}$ is considered, the above bounds applied to the
drift term $b(x) = -\frac{1}{2}\nabla U(x)$ yield bounds in TV
distance, more precisely via the convergence of the Fokker-Planck
equation and the Pinsker
inequality~\citep{dalalyan2017theoretical}. Instead, in this paper, so
as to obtain a sharper result, we directly combine the result of
Theorem~\ref{ThmMain} with the analysis
of~\citet{cheng2017convergence}.  A notable feature of this strategy
is that it completely decouples analyses of the discretization error
and of the convergence of the continuous-time diffusion process. The
convergence of the continuous-time process is guaranteed when the
target distribution satisfies a log-Sobolev
inequality~\citep{toscani1999entropy,markowich2000trend}.

Given an error tolerance $\varepsilon>0$ and a distance function
$\dist$, we define the associated \emph{mixing time} of the discretized process
\begin{align}
\label{EqnMixingTime}
N(\varepsilon, \dist) & \defn \arg \min_{k = 1, 2, \ldots} \left \{
\dist(\discretized \pi_{k \stepsize},\gamma) \leq \epsilon \right \}.
\end{align}
With this definition, we have the following:
\begin{corollary}
  \label{cor:main}
  Consider a density of the form $\gamma(x) \propto \exp(-U(x))$ such
  that:
  \begin{enumerate}
\item[(a)] The gradient $\nabla U$ satisfies
  Assumptions~\ref{assume-lipschitz-drift},~\ref{assume-smooth-drift},
  and~\ref{assume-strong-dissipative}.
\item[(b)] The distribution defined by $\gamma$ satisfies a
  log-Sobolev inequality with constant $\rho > 0$.
  \end{enumerate}
Then under the initialization
Assumption~\ref{assume-smooth-initialize}, for any $\varepsilon>0$,
the unadjusted Langevin algorithm~\eqref{eq:langevin-euler} with drift
\mbox{$b = - \frac{1}{2} \nabla U$} and step size
$\stepsize=\frac{\sqrt{\varepsilon \rho}}{d}(\log
\frac{1}{\rho})^{-1}$ has mixing times bounded as:
\begin{align*}
   \begin{cases}
     N(\varepsilon, D_{KL})=
     \tilde{O}\left(\varepsilon^{-1/2}d{\rho}^{-3/2}\right)&\text{in
       KL-divergence}, \\ N(\varepsilon, TV)=\tilde{O}\left(d
     \varepsilon^{-1}\rho^{-\frac{3}{2}}\right)&\text{in TV
       distance},\\ N(\varepsilon, \Wass_2)=\tilde{O}\left(d
     \varepsilon^{-1}\rho^{-\frac{5}{2}}\right)&\text{in
     }\Wass_2 \text{ distance},\\
     N(\varepsilon,\Wass_1)=\tilde{O}\left( d^{\frac{3}{2}}
     \varepsilon^{-1}\rho^{-\frac{3}{2}}\right) &\text{in }\Wass_1 \text{ distance}.
   \end{cases}
 \end{align*}
\end{corollary}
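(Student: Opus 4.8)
The plan is to \emph{decouple} the analysis into a discretization component, governed by Theorem~\ref{ThmMain}, and a continuous-time convergence component, governed by the log-Sobolev inequality (LSI), following the strategy of~\citet{cheng2017convergence}. Write $\discretized\pi_{k\stepsize}$ for the law of the $k$-th Euler iterate, $\pi_t$ for the time-$t$ marginal of the Langevin diffusion with drift $b=-\tfrac12\nabla U$, and $\interpolated\pi_t$ for the marginal of the continuous interpolation of the Euler scheme whose Fokker--Planck equation is derived in the proof of Theorem~\ref{ThmMain} (with drift the backward conditional expectation of $b$). The key observation is that the relative entropy of $\interpolated\pi_t$ with respect to the \emph{target} $\gamma$ satisfies a differential inequality
\[
  \frac{d}{dt}\,\kull{\interpolated\pi_t}{\gamma}\;\le\;-\rho\,\kull{\interpolated\pi_t}{\gamma}\;+\;R(t),
\]
where the contraction term $-\rho(\cdot)$ comes from inserting the LSI into the entropy-dissipation identity, and $R(t)$ is exactly the drift-mismatch term that the proof of Theorem~\ref{ThmMain} controls, so that $\int_0^T R(t)\,dt$ is bounded by (essentially) the right-hand side of~\eqref{EqnMainBound}. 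Integrating with Gr\"onwall's inequality yields, up to lower-order terms and $\polylog$ factors,
\[
  \kull{\discretized\pi_T}{\gamma}\;\lesssim\;e^{-\rho T}\,\kull{\pi_0}{\gamma}\;+\;\frac{\stepsize^2 d^2}{\rho}.
\]

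I would then balance the two terms. One first checks that $\kull{\pi_0}{\gamma}$ and every initialization-dependent quantity in~\eqref{EqnMainBound} --- $h_0$, $H(\pi_0)$, $A_0$, $\sigma_0^2 d$, and $(\beta+d)/\mu$ --- is polynomial in $(d,1/\rho)$; this uses Assumption~\ref{assume-smooth-initialize} for $\pi_0$ and the fact that LSI$(\rho)$ implies a Poincar\'e inequality, hence sub-Gaussian tails for $\gamma$ and bounds on its moments. Taking $T\asymp\rho^{-1}\log(1/\varepsilon)$ drives the first term below $\varepsilon/2$; taking $\stepsize\asymp\tfrac{\sqrt{\varepsilon\rho}}{d}(\log\tfrac1\rho)^{-1}$ --- which for small $\varepsilon$ satisfies $\stepsize<\tfrac{1}{2\smooth}$ and makes the $\stepsize^4$ terms of~\eqref{EqnMainBound} negligible --- drives the second term below $\varepsilon/2$. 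Hence $N(\varepsilon, D_{KL})=\lceil T/\stepsize\rceil=\tilde{O}(d\,\varepsilon^{-1/2}\rho^{-3/2})$.

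The three remaining bounds follow by converting this KL guarantee through standard transportation inequalities and re-optimizing $\stepsize$ for the target accuracy. Pinsker's inequality, $\mathrm{TV}(\discretized\pi_T,\gamma)\le\sqrt{\tfrac12\kull{\discretized\pi_T}{\gamma}}$, shows that achieving $\mathrm{TV}\le\varepsilon$ needs the KL of order $\varepsilon^2$, giving $N(\varepsilon,\mathrm{TV})=\tilde{O}(d\,\varepsilon^{-1}\rho^{-3/2})$. The Otto--Villani theorem gives that LSI$(\rho)$ implies Talagrand's $T_2$ inequality $\Wass_2(\cdot,\gamma)^2\le\tfrac2\rho\kull{\cdot}{\gamma}$, so $\Wass_2\le\varepsilon$ needs KL of order $\rho\varepsilon^2$, and carrying the now $\rho$-dependent target through the parameter choice yields $N(\varepsilon,\Wass_2)=\tilde{O}(d\,\varepsilon^{-1}\rho^{-5/2})$. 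For $\Wass_1$, I would instead bound $\Wass_1(\discretized\pi_T,\gamma)$ by a second-moment--weighted interpolation with $\mathrm{TV}$ --- roughly $\Wass_1\lesssim M^{1/2}\,\mathrm{TV}^{1/2}$ with $M=O(d/\rho)$ the second-moment bound for $\gamma$ and $\discretized\pi_{k\stepsize}$ (again from sub-Gaussianity) --- which produces $N(\varepsilon,\Wass_1)=\tilde{O}(d^{3/2}\,\varepsilon^{-1}\rho^{-3/2})$.

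The main obstacle is making the decoupling genuinely rigorous: KL divergence has no triangle inequality, so one cannot naively combine $\kull{\discretized\pi_T}{\pi_T}$ from Theorem~\ref{ThmMain} with $\kull{\pi_T}{\gamma}$ from continuous-time convergence to bound $\kull{\discretized\pi_T}{\gamma}$. The fix is to re-run the entropy-dissipation argument of the proof of Theorem~\ref{ThmMain} with $\gamma$ --- rather than the diffusion marginal $\pi_t$ --- as the reference measure, so that the LSI contraction $-\rho\,\kull{\interpolated\pi_t}{\gamma}$ appears alongside the \emph{same} drift-mismatch source term; this requires verifying that the Fisher-information estimates and tail bounds along the interpolated path that form the heart of that proof remain valid under this change of reference. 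A secondary technical point is confirming that the intermediate iterate laws $\discretized\pi_{k\stepsize}$ retain the quadratic-tail regularity needed for the per-step mismatch to stay uniformly $O(\stepsize^3 d^2)$, and that every parameter-dependent prefactor is polynomial in $(d,1/\rho)$ so that it is absorbed into the $\tilde{O}(\cdot)$ notation.
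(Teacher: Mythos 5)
Your overall strategy for the KL, TV, and $\Wass_2$ bounds is exactly the paper's: the paper also differentiates $\kull{\discretized{\pi}_t}{\gamma}$ directly along the interpolated Fokker--Planck equation, splits the dissipation via Young's inequality into a $-\tfrac{\rho}{4}\kull{\discretized{\pi}_t}{\gamma}$ contraction (from the LSI) plus the same drift-mismatch term $\int \discretized{\pi}_t\Vert \discretized{b}_t-b\Vert_2^2$ already controlled in the proof of Theorem~\ref{ThmMain}, and then integrates; the obstacle you flag (no triangle inequality for KL) is resolved in precisely the way you propose, and the regularity/moment estimates carry over unchanged because they concern the interpolated process only, not the reference measure. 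Pinsker and Otto--Villani/Talagrand then give the TV and $\Wass_2$ statements as you describe.

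The genuine gap is your $\Wass_1$ argument. The moment-weighted interpolation $\Wass_1 \lesssim M^{1/2}\,\mathrm{TV}^{1/2}$ with $M = O(d/\rho)$, combined with Pinsker, gives $\Wass_1 \lesssim (d/\rho)^{1/2}\,\kull{\discretized{\pi}_T}{\gamma}^{1/4}$, so reaching $\Wass_1\le\varepsilon$ forces a KL target of order $\varepsilon^4\rho^2/d^2$; feeding this into the KL mixing time $N(\delta)=\tilde O(d\rho^{-3/2}\delta^{-1/2})$ yields $N=\tilde O(d^{2}\varepsilon^{-2}\rho^{-5/2})$, which is strictly worse than the claimed $\tilde O(d^{3/2}\varepsilon^{-1}\rho^{-3/2})$: the detour through TV costs a square root (you only get $\mathrm{KL}^{1/4}$ rather than $\mathrm{KL}^{1/2}$). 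The paper instead invokes the weighted Csisz\'ar--Kullback--Pinsker inequality of \citet{bolley2005weighted},
\begin{align*}
\Wass_1(\discretized{\pi}_T,\gamma)\;\le\; C_\gamma\,\sqrt{\kull{\discretized{\pi}_T}{\gamma}},
\qquad
C_\gamma \defn 2\inf_{\alpha>0}\Bigl(\tfrac{1}{2\alpha}\bigl(1+\log\textstyle\int e^{\alpha\Vert x\Vert_2^2}d\gamma\bigr)\Bigr)^{1/2},
\end{align*}
and verifies $C_\gamma = O\bigl(\sqrt{(\beta+d)/\mu}\bigr)=O(\sqrt d)$ by establishing exponential integrability of $\gamma$: the moment bound of Lemma~\ref{lemma-tail-strong-dissipative} is applied to the stationary diffusion itself, and a Taylor-series/Stirling computation bounds $\int e^{\alpha\Vert x\Vert_2^2}d\gamma$ for a suitable $\alpha\asymp\mu$. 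With $\Wass_1\le C_\gamma\sqrt{\mathrm{KL}}$ the required KL target is $\varepsilon^2/d$, which produces the stated $d^{3/2}\varepsilon^{-1}\rho^{-3/2}$. So your proposal needs this linear-in-$\sqrt{\mathrm{KL}}$ transport inequality (plus the exponential-moment verification for $\gamma$) to close the $\Wass_1$ case; the TV-interpolation route cannot deliver the claimed rate.
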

The set of distributions satisfying a log-Sobolev
inequality~\citep{gross1975logarithmic} includes strongly log-concave
distributions~\citep{bakry1985diffusions} as well as perturbations
thereof~\citep{holley1987logarithmic}. For example, it includes
distributions that are strongly log-concave outside of a bounded
region, but non-log-concave inside of it, as analyzed in some recent
work~\citep{ma2018sampling}. Under the additional smoothness
Assumption~\ref{assume-smooth-drift}, we obtain an improved
mixing-time $O(d/\sqrt{\varepsilon})$ compared to $O(d/\varepsilon)$
of \citet{ma2018sampling}. On the other hand, we obtain the same
mixing time in $\Wass_2 \text{ distance}$ as the
papers~\citep{durmus2017nonasymptotic, dalalyan2017user} but under
weaker assumptions on the target distribution---namely, those that
satisfy a log-Sobolev inequality as opposed to being strongly
log-concave.


\subsection{Overview of proof}

In this section, we provide a high-level overview of the three main
steps that comprise the proof of Theorem~\ref{ThmMain}; the subsequent
Sections~\ref{sec:fokker},~\ref{sec:kl}, and~\ref{sec:regu} provide
the details of these steps.

\paragraph{Step 1:}  First,  we construct a
continuous-time interpolation $\discretized X_t$ of the discrete-time
process $\discretized X_{k \stepsize}$, and prove that its density
$\discretized \pi_t$ satisfies an analogue of the Fokker-Plank
equation (see Lemma~\ref{lemma-Fokker-Planck-for-interploated}). The
elliptic operator of this equation is time-dependent, with a drift
term $\discretized b_t = \mathbb{E} (b(\discretized X_{k \stepsize}) |
\discretized X_t =x)$ given by the backward conditional expectation of
the original drift term $b$. By direct calculation, the time
derivative of the KL divergence between the interpolated and the
original Langevin diffusion $\kull{\discretized \pi_t}{\pi_t}$ is
controlled by the mean squared difference between the drift terms of
the Fokker-Planck equations for the original and the interpolated
processes, namely the quantity
\begin{align}
  \label{EqnMSETerm}
  \int \discretized \pi_t(x)\Vert b(x) - \discretized b_t (x)
  \Vert_2^2 dx.
\end{align}
See Lemma~\ref{lemma-kl-time-derivative} for details.
\paragraph{Step 2:}  Our next step is to control the mean-squared
error term~\eqref{EqnMSETerm}.  Compared to the MSE bound obtained
from the Girsanov theorem by~\citet{dalalyan2017theoretical}, our
bound has an additional backward conditional expectation inside the
norm. Directly pulling this latter outside the norm by convexity
inevitably entails a KL bound $O(\stepsize)$ due to fluctuations of the
Brownian motion. However, taking the backward expectation cancels out
most of the noises, as long as the distribution of the initial iterate
at each step is smooth enough. This geometric intuition is explained
precisely in Section~\ref{subsection-geometric-intuition}, and
concretely implemented in Section~\ref{subsection-mse-to-fisher}. The
following proposition summarizes the main conclusion from Steps 1 and
2:
\begin{proposition}
  \label{prop-KL-derivative-fisher-moment}
Under
Assumptions~\ref{assume-lipschitz-drift},~\ref{assume-smooth-drift}
and~\ref{assume-smooth-initialize}, for any $t \in [ k \stepsize, (k+1)
  \stepsize]$, we have 
\begin{multline}
  \label{EqnKLDerivativeFisher}
\frac{d}{dt}\kull{\discretized \pi_t}{ \pi_t } \leq 4 \smooth^2 (t-k \stepsize)^2
\int \discretized{\pi}_{k \stepsize}\Vert \nabla \log \discretized{\pi}_{k \stepsize}\Vert_2^2 +
12 \smooth^4 ( t - k \stepsize)^3d \\
+ 16 (t-k \stepsize)^4 \hessianlip^2 \left(A_0^4 + \smooth^4 \mathbb{E} \Vert
\discretized{X}_{k \stepsize} \Vert_2^4 \right) + 48 (t-k \stepsize)^2 \hessianlip^2 d^2.
\end{multline}
\end{proposition}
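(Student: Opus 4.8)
The plan is to carry out the two-step reduction previewed in Sections~\ref{sec:fokker}--\ref{sec:regu}. By Lemma~\ref{lemma-Fokker-Planck-for-interploated} the interpolated density $\discretized\pi_t$ solves a Fokker--Planck equation whose drift at a point $x$ is the backward conditional mean $\discretized b_t(x)=\Exs\bigl(b(\discretized{X}_{k\stepsize})\mid \discretized{X}_t=x\bigr)$, and Lemma~\ref{lemma-kl-time-derivative} bounds $\frac{d}{dt}\kull{\discretized\pi_t}{\pi_t}$ by a constant multiple of the mean-squared drift gap $\int \discretized\pi_t(x)\,\Vert b(x)-\discretized b_t(x)\Vert_2^2\,dx$. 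Since $b(\discretized X_t)$ is measurable with respect to $\discretized X_t$, the tower rule rewrites this quantity as $\Exs\bigl\Vert\Exs\bigl(b(\discretized X_t)-b(\discretized X_{k\stepsize})\mid \discretized X_t\bigr)\bigr\Vert_2^2$, so everything comes down to estimating it for $t\in[k\stepsize,(k+1)\stepsize]$. Write $s\defn t-k\stepsize\in[0,\stepsize]$; along the interpolation one has $\discretized X_t=\discretized X_{k\stepsize}+s\,b(\discretized X_{k\stepsize})+\sqrt s\,\xi$ with $\xi\sim\NORMAL(0,I)$ independent of $\discretized X_{k\stepsize}$, and the hypothesis $\stepsize<\tfrac1{2\smooth}$ places us in the regime $s\smooth\le\tfrac12$, which I will use to absorb lower-order contributions.

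First I would dispose of the curvature term. A first-order Taylor expansion of $b$ around $\discretized X_{k\stepsize}$ gives $b(\discretized X_t)-b(\discretized X_{k\stepsize})=\nabla b(\discretized X_{k\stepsize})(\discretized X_t-\discretized X_{k\stepsize})+R$ with $\Vert R\Vert_2\le\tfrac{\hessianlip}{2}\Vert \discretized X_t-\discretized X_{k\stepsize}\Vert_2^2$ by Assumption~\ref{assume-smooth-drift}. Using $\Vert \discretized X_t-\discretized X_{k\stepsize}\Vert_2^2\le 2s^2\Vert b(\discretized X_{k\stepsize})\Vert_2^2+2s\Vert\xi\Vert_2^2$ together with the Lipschitz growth bound $\Vert b(y)\Vert_2\le A_0+\smooth\Vert y\Vert_2$ (Assumption~\ref{assume-lipschitz-drift}) and the Gaussian moments $\Exs\Vert\xi\Vert_2^2=d$, $\Exs\Vert\xi\Vert_2^4\lesssim d^2$, the remainder $\Exs\Vert\Exs(R\mid \discretized X_t)\Vert_2^2\le\Exs\Vert R\Vert_2^2$ produces exactly the two terms $16s^4\hessianlip^2(A_0^4+\smooth^4\Exs\Vert \discretized X_{k\stepsize}\Vert_2^4)$ and $48s^2\hessianlip^2 d^2$ of~\eqref{EqnKLDerivativeFisher}, the numerical factors coming from the crude constants in these Cauchy--Schwarz/AM--GM steps.

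The substantive part is the first-order term $\Exs\bigl(\nabla b(\discretized X_{k\stepsize})(\discretized X_t-\discretized X_{k\stepsize})\mid \discretized X_t=x\bigr)$. The naive bound $\le\smooth\,\Exs(\Vert \discretized X_t-\discretized X_{k\stepsize}\Vert_2\mid \discretized X_t=x)$ scales like $\smooth\sqrt{sd}$ and only reproduces the classical $O(\stepsize)$ KL rate; the whole point is to gain a further $\sqrt s$ on the $\sqrt s\,\xi$ part of the increment by exploiting the \emph{backward} conditioning. Conditionally on $\discretized X_t=x$, the law of $\discretized X_{k\stepsize}$ has density $\propto \discretized\pi_{k\stepsize}(y)\,g_s(x,y)$, where $g_s(x,y)\propto\exp\!\bigl(-\tfrac1{2s}\Vert x-y-s\,b(y)\Vert_2^2\bigr)$ is the one-step Euler transition kernel. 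Differentiating the kernel in $y$ gives $(x-y-s\,b(y))\,g_s(x,y)=s\,(I+s\nabla b(y))^{-\top}\nabla_y g_s(x,y)$; integrating by parts in $y$ therefore transfers the gradient onto $\discretized\pi_{k\stepsize}$ and yields
\[
 x-\Exs\bigl(\discretized X_{k\stepsize}\mid \discretized X_t=x\bigr)
 = -\,s\,\Exs\bigl(\nabla\log\discretized\pi_{k\stepsize}(\discretized X_{k\stepsize})\mid \discretized X_t=x\bigr)
 + s\,\discretized b_t(x) + s\,\mathcal{E}(x),
\]
where $\mathcal{E}(x)$ collects the correction from $(I+s\nabla b)^{-\top}=I-s\nabla b^\top+\cdots$ and obeys $\Vert\mathcal{E}(x)\Vert_2\lesssim\smooth\sqrt s\,\Exs\bigl(\Vert\xi\Vert_2\mid \discretized X_t=x\bigr)$. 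Substituting this back into the first-order term, using $\opnorm{\nabla b}\le\smooth$, and invoking the $L^2$-contractivity of conditional expectation — so that $\Exs\Vert\Exs(\nabla\log\discretized\pi_{k\stepsize}(\discretized X_{k\stepsize})\mid \discretized X_t)\Vert_2^2\le\int\discretized\pi_{k\stepsize}\Vert\nabla\log\discretized\pi_{k\stepsize}\Vert_2^2$ — the score contribution is responsible for the leading term $4\smooth^2 s^2\int\discretized\pi_{k\stepsize}\Vert\nabla\log\discretized\pi_{k\stepsize}\Vert_2^2$, squaring and integrating $\mathcal{E}$ produces the dimension term $12\smooth^4 s^3 d$, and the pieces $s\,\discretized b_t(x)$ and the drift part $s\,\nabla b(\discretized X_{k\stepsize})b(\discretized X_{k\stepsize})$ of the increment — which are already of order $s$ before conditioning — contribute only at order $s^2$, one portion being absorbed back into the left-hand side via $2s\smooth\le1$ and the remainder being dominated by the terms already isolated. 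Collecting the four contributions gives~\eqref{EqnKLDerivativeFisher}.

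The main obstacle is the integration-by-parts step just above: because the Euler drift $s\,b(y)$ sits inside the Gaussian kernel, this is not the textbook Stein/Tweedie identity, so one must carefully propagate the non-commuting matrix $(I+s\nabla b(y))^{-\top}$ and verify that each of its higher-order-in-$s$ corrections is dominated either by the Fisher information of $\discretized\pi_{k\stepsize}$ or by $\hessianlip$ through Assumption~\ref{assume-smooth-drift}. A second, conceptual, point is that the regularity functional that emerges is the Fisher information of the density at the \emph{start} of the step, $\int\discretized\pi_{k\stepsize}\Vert\nabla\log\discretized\pi_{k\stepsize}\Vert_2^2$; the Proposition deliberately leaves this unestimated, and bounding it uniformly along the trajectory is exactly the role of the De Bruijn-type estimates of Section~\ref{sec:regu}.
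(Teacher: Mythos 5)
Your proposal follows essentially the same route as the paper's proof: Lemma~\ref{lemma-Fokker-Planck-for-interploated} and Lemma~\ref{lemma-kl-time-derivative} reduce the problem to the drift gap $\int\discretized\pi_t\Vert b-\discretized b_t\Vert_2^2$, a Taylor split isolates an $\hessianlip$-remainder of the right order, and the heart of the argument is the same integration by parts against the one-step Euler transition kernel, which (writing $s=t-k\stepsize$) converts the backward conditional mean displacement into $-s\,\Exs\left(\nabla\log\discretized\pi_{k\stepsize}(\discretized X_{k\stepsize})\mid\discretized X_t=x\right)$ plus an $O(\smooth s^{3/2}\sqrt d)$ correction plus $s\,\discretized b_t(x)$, followed by Jensen/contractivity to reach the Fisher information of $\discretized\pi_{k\stepsize}$. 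Your $(I+s\nabla b)^{-\top}$ bookkeeping is algebraically the same as the paper's explicit decomposition $y-x=a_1-a_2-a_3$; your $\mathcal{E}$ and $s\,\discretized b_t$ are exactly the paper's $I_2$ and $I_3$ of Lemma~\ref{lemma-i2i3-bounds}.

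The one junction that does not go through as written is where the Taylor expansion meets the displacement identity. You expand $b$ around $\discretized X_{k\stepsize}$, so your first-order term is $\Exs\left(\nabla b(\discretized X_{k\stepsize})(\discretized X_t-\discretized X_{k\stepsize})\mid\discretized X_t=x\right)$, with a \emph{random} Jacobian inside the backward conditional expectation; your integration-by-parts identity, however, is for the unweighted displacement $x-\Exs(\discretized X_{k\stepsize}\mid\discretized X_t=x)$, and ``substituting it back'' while invoking $\opnorm{\nabla b}\le\smooth$ is not legitimate---bounding the matrix first forces the norm back inside the conditional expectation and returns you to the naive $O(\sqrt{\stepsize})$ estimate you set out to beat. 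The paper sidesteps this by expanding at the conditioning point $x$, so that $\nabla b(x)$ is deterministic given $\discretized X_t=x$ and factors out of the conditional expectation (see~\eqref{eq:taylor}); alternatively, you can keep your expansion point but must add and control the commutator $\Exs\left((\nabla b(\discretized X_{k\stepsize})-\nabla b(x))(\discretized X_t-\discretized X_{k\stepsize})\mid\discretized X_t=x\right)$, which Assumption~\ref{assume-smooth-drift} bounds by $\hessianlip\,\Exs\left(\Vert\discretized X_t-\discretized X_{k\stepsize}\Vert_2^2\mid\discretized X_t=x\right)$, i.e.\ the same order as the remainder you already handle. Finally, your claim that the $s\,\discretized b_t(x)$ piece is ``absorbed back into the left-hand side via $2\stepsize\smooth\le 1$'' is vaguer than the paper's treatment, which bounds it explicitly as $I_3$ in Lemma~\ref{lemma-i2i3-bounds}; this affects bookkeeping of lower-order terms but not the order of the stated bound.
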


\paragraph{Step 3:}  The third step
is to bound the moments of $\nabla \log \discretized \pi_{k \stepsize}$ and $\discretized
X_{k \stepsize}$, so as to control the right-hand side of
equation~\eqref{EqnKLDerivativeFisher}.  In order to bound the Fisher
information term $\int \discretized{\pi}_{k \stepsize}\Vert \nabla \log
\discretized{\pi}_{k \stepsize}\Vert_2^2$, we prove an extended version of the De
Brujin formula for the Fokker-Planck equation of $\discretized \pi_t$ (see
Lemma~\ref{lemma-regularity-time-integral}). It bounds the time
integral of $\int \discretized{\pi}_{t}\Vert \nabla \log
\discretized{\pi}_{t}\Vert_2^2$ by moments of $X_t$. Since
Proposition~\ref{prop-KL-derivative-fisher-moment} requires control of
the Fisher information at the grid points $\{k \stepsize \}_{k \in
  \mathbb{N}}$, we bound the integral at time $k \stepsize$ by the one at
time $t \in[(k-1) \stepsize, k \stepsize]$; see
Lemma~\ref{lemma-regularity-relative-difference} for the precise
statement. Combining these results, we obtain the following bound of
the averaged Fisher information.
\begin{proposition}
 \label{prop-fisher-grid}
    Under
    Assumptions~\ref{assume-lipschitz-drift},~\ref{assume-smooth-drift}
    and~\ref{assume-smooth-initialize}, for $T=N \stepsize$ and $N \in
    \mathbb{N}_+$, we have
    \begin{multline*}
        \frac{1}{N}\sum_{k=1}^N\! \int \!\discretized{\pi}_{k \stepsize}\Vert \nabla
        \log \discretized{\pi}_{k \stepsize}\Vert_2^2 \leq (32h_0+128A_0^2T  + H (\pi_0))\\
        +
        32 \sigma_0^{-2} \Exs \vecnorm{\discretized{X}_T}{2}^2 + 128\smooth^2 \int_0^T   \mathbb{E}\Vert \discretized{X}_t \Vert_2^2dt+32 \stepsize^2d^2\hessianlip^2T.
    \end{multline*}
\end{proposition}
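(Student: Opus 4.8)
The plan is to combine an extended De Bruijn formula for the interpolated Fokker--Planck equation with a comparison between the Fisher information at a grid point $k\stepsize$ and its time-average over the preceding interval $[(k-1)\stepsize, k\stepsize]$. Concretely, I would first invoke Lemma~\ref{lemma-regularity-time-integral}, which (per the proof overview) controls the time integral $\int_0^T \int \discretized\pi_t \Vert \nabla \log \discretized\pi_t\Vert_2^2\,dt$ in terms of the entropy/initialization data $h_0 + H(\pi_0)$, a linear-in-$T$ term coming from $A_0 = \Vert b(0)\Vert_2$ (hence the $128 A_0^2 T$), the second moment $\Exs\vecnorm{\discretized X_T}{2}^2$ weighted by $\sigma_0^{-2}$ (from differentiating the relative entropy against $\pi_0$ whose tail is quadratic with scale $\sigma_0$), the running second moment $\int_0^T \Exs\Vert \discretized X_t\Vert_2^2\,dt$ weighted by $\smooth^2$ (Lipschitz constant of $b$), and the discretization remainder $\stepsize^2 d^2 \hessianlip^2 T$ produced by the extra drift term $\discretized b_t - b$ in the interpolated equation. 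This is essentially an entropy-dissipation (De Bruijn) identity $\frac{d}{dt}H(\discretized\pi_t) = $ (Fisher information) $+$ (cross term with $\discretized b_t$), integrated over $[0,T]$ and rearranged to solve for the time-integrated Fisher information, with the cross terms bounded via Cauchy--Schwarz and the growth assumptions.

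Next I would pass from the continuous time-integral to the discrete grid sum. Since Proposition~\ref{prop-KL-derivative-fisher-moment} only needs the Fisher information at the integer grid points, I would apply Lemma~\ref{lemma-regularity-relative-difference}, which bounds $\int \discretized\pi_{k\stepsize}\Vert\nabla\log\discretized\pi_{k\stepsize}\Vert_2^2$ by (a constant multiple of) the average $\frac{1}{\stepsize}\int_{(k-1)\stepsize}^{k\stepsize}\int\discretized\pi_t\Vert\nabla\log\discretized\pi_t\Vert_2^2\,dt$ plus lower-order corrections; this is the technical heart, since the Fisher information can in principle spike at a single time, and one must use the regularizing effect of the Brownian part of the dynamics over an interval of length $\stepsize$ to rule that out. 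Summing over $k = 1, \ldots, N$ and dividing by $N$ turns $\sum_k \frac{1}{\stepsize}\int_{(k-1)\stepsize}^{k\stepsize}(\cdots)\,dt$ into $\frac{1}{N\stepsize}\int_0^T(\cdots)\,dt = \frac{1}{T}\int_0^T(\cdots)\,dt$, so the $N$ in the denominator exactly absorbs the $T$ that Lemma~\ref{lemma-regularity-time-integral} produces on the $A_0^2$ and $\hessianlip^2$ terms, while the $\sigma_0^{-2}\Exs\vecnorm{\discretized X_T}{2}^2$ and $\smooth^2\int_0^T\Exs\Vert\discretized X_t\Vert_2^2\,dt$ terms survive intact (the latter because $\frac{1}{T}\cdot T\int_0^T = \int_0^T$ once one also accounts for the interval length; tracking the constants $32, 128$ is routine bookkeeping).

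The main obstacle I anticipate is the grid-to-interval comparison in Lemma~\ref{lemma-regularity-relative-difference}: one needs a quantitative statement that the Fisher information $t \mapsto \int\discretized\pi_t\Vert\nabla\log\discretized\pi_t\Vert_2^2$ cannot be much larger at the endpoint $k\stepsize$ than its average over $[(k-1)\stepsize, k\stepsize]$, which requires differential control of how fast the Fisher information can change along the interpolated Fokker--Planck flow. This is where the second-order smoothness Assumption~\ref{assume-smooth-drift} (the constant $\hessianlip$) enters, and where the $\stepsize^2 d^2\hessianlip^2 T$ error term is generated. The remaining inputs---bounding $\Exs\vecnorm{\discretized X_T}{2}^2$ and $\int_0^T\Exs\Vert\discretized X_t\Vert_2^2\,dt$---I would leave implicit here, since they are furnished by the moment bounds established for the discretized/interpolated process and plugged in at the end when assembling Theorem~\ref{ThmMain}; within this proposition they appear on the right-hand side unevaluated.
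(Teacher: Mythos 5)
Your proposal matches the paper's own proof: it bounds the time-integrated Fisher information via the De Bruijn-type Lemma~\ref{lemma-regularity-time-integral}, then transfers this to the grid points with Lemma~\ref{lemma-regularity-relative-difference} by averaging over $t_0 \in [(k-1)\stepsize, k\stepsize]$ and summing over $k$, exactly as in Section~\ref{sec:regu}. The remaining constant bookkeeping you defer is indeed routine, so this is essentially the same argument.
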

It remains to bound the moments of $\discretized{X}_t$ along the
path. By Proposition~\ref{prop-KL-derivative-fisher-moment} and Proposition~\ref{prop-fisher-grid}, the second and fourth moment of $\discretized{X}_t$ are used. With different assumptions on the drift term, different moments bounds can be established, leading to Theorem~\ref{ThmMain} and Theorem~\ref{ThmWeakAssumption}, respectively.

\begin{itemize}
    \item Under distant dissipativity (Assumption~\ref{assume-strong-dissipative}), the $p$-th moment of this process can be bounded from above, for arbitrary value of $p > 1$. (see
Lemma~\ref{lemma-tail-strong-dissipative}). The proof is based on the
Burkholder-Davis-Gundy inequality for continuous martingales. Collecting these results yields Eq~\eqref{EqnMainBound}, which completes our sketch of the proof of Theorem~\ref{ThmMain}.
    \item Without Assumption~\ref{assume-strong-dissipative}, if the drift term is negative gradient of a function $b = - \nabla f$ with $f \geq 0$, the second and fourth moment can still be bounded (see Lemma~\ref{lemma:moment-24-only-non-neg}). The proof uses the moment bounds of $\nabla f$ along the path.  Collecting these results yields Eq~\eqref{EqnMainTwo}, which completes our sketch of the proof of Theorem~\ref{ThmWeakAssumption}.
  
\end{itemize}


\section{Interpolation, KL Bounds and Fokker-Planck Equation}
\label{sec:fokker}

Following~\cite{dalalyan2017theoretical}, the first step of the proof
is to construct a continuous-time interpolation for the discrete-time
algorithm~\eqref{eq:langevin-euler}.  In particular, we define a
stochastic process over the interval $t \in [\stepsize, (k+1) \stepsize]$ via
\begin{align}
\label{eq:def-interpolation}
\discretized{X}_t & \defn \discretized{X}_{k \stepsize} + \int_0^{t-k \stepsize} b(\discretized{X}_{k
  \stepsize}) ds + \int_{k \stepsize}^{t} d \discretized B_s. 
\end{align}
Let $\{ \discretized{\mathcal{F}}_t \, \mid \, t \geq 0\}$ be the
natural filtration associated with the Brownian motion $\{
\discretized{B}_t \, \mid \, t \geq 0\}$. Conditionally on
$\discretized{\mathcal{F}}_{k \stepsize}$, the process $\{
(\discretized{X}_t | \mathcal{F}_{k \stepsize}) \, \mid \, t \in [k
  \stepsize, (k+1) \stepsize ] \}$ is a Brownian motion with constant
drift $b(\discretized{X}_{k \stepsize})$ and starting at
$\discretized{X}_{k \stepsize}$. This interpolation has been used in
past work~\citep{dalalyan2017theoretical,cheng2017convergence}.  In
their work, the KL divergence between the law of processes $\{ X_t
\mid t \in [0,T] \}$ and $ \{ \discretized{X}_t \mid t \in [0, T] \}$
is controlled, via a use of the Girsanov theorem, by bounding
Radon-Nikodym derivatives.  This approach requires controlling the
quantity $\mathbb{E} \Vert b(\discretized{X}_t) - b(\discretized{X}_{k
  \stepsize}) \Vert_2^2$ for $t \in[k \stepsize,(k+1)\stepsize]$.  It
is should be noted that it scales as $O(\stepsize)$, due to the scale
of oscillation of Brownian motions.

In our approach, we overcome this difficulty by only considering the
KL divergence of the one-time marginal laws
$\kull{\mathcal{L}(\discretized{X}_T) }{ \mathcal{L}(X_T)}$.  Let us
denote the densities of $X_t$ and $\discretized{X}_t$ with respect to Lebesgue
measure in $\real^d$ by $\pi_t$ and $\discretized{\pi}_t$, respectively. It is
well-known that when $b$ is Lipschitz, then the density $\pi_t$
satisfies the Fokker-Planck equation
\begin{align}
  \label{eq:fokker_langevin}
  \frac{\partial \pi_t}{\partial t} = -  \nabla \cdot(\pi_t b) +
  \frac{1}{2} \Delta \pi_t,
\end{align}
where $\Delta$ denotes the Laplacian operator.  On the other hand, the
interpolated process $\discretized{X}_{k \stepsize}$ is not Markovian, and so does
not have a semigroup generator.  For this reason, it is difficult to
directly control the KL divergence between it and the original
Langevin diffusion. In the following lemma, we construct a different
partial differential equation that is satisfied by $\discretized{\pi}_t$.
\begin{lemma}
\label{lemma-Fokker-Planck-for-interploated}
The density $\discretized{\pi}_t$ of the process $\discretized{X}_t$ defined in
\eqref{eq:def-interpolation} satisfies the PDE
\begin{align}
  \label{eq:fokker_inter}
  \frac{\partial \discretized{\pi}_t}{\partial t}= - \nabla \cdot
  \left(\discretized{\pi}_t \discretized{b}_t \right)+\frac{1}{2}\Delta \discretized{\pi}_t
  \qquad \mbox{over the interval $t \in[k \stepsize,(k+1)\stepsize]$,}
\end{align}
where $\discretized{b}_t(x) \defn \mathbb{E}\left(b(\discretized{X}_{k \stepsize})\big|
\discretized{X}_t = x \right)$ is a time-varying drift term.
\end{lemma}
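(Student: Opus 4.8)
The plan is to derive the stated PDE first in its weak (distributional) form, by the classical route: apply It\^o's formula to $\varphi(\discretized X_t)$ for an arbitrary test function $\varphi \in C_c^\infty(\realdim)$, take expectations, kill the martingale term, and then identify the drift by the tower property of conditional expectation. Throughout, one fixes the index $k$ and works on the interval $t \in [k\stepsize,(k+1)\stepsize]$, so that the drift $\discretized b_t$ always refers to conditioning on $\discretized X_{k\stepsize}$, the value at the left endpoint of the current interval. The first thing to pin down is regularity. Conditionally on $\discretized{\mathcal{F}}_{k\stepsize}$, the process $\discretized X_t$ on $[k\stepsize,(k+1)\stepsize]$ is a Brownian motion with the constant drift $b(\discretized X_{k\stepsize})$ started at $\discretized X_{k\stepsize}$, hence conditionally Gaussian with mean $\discretized X_{k\stepsize} + (t-k\stepsize)\,b(\discretized X_{k\stepsize})$ and covariance $(t-k\stepsize)I_d$. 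Therefore, for every $t > k\stepsize$ the law of $\discretized X_t$ is a mixture of nondegenerate Gaussians: it has a density $\discretized\pi_t$ that is strictly positive, $C^\infty$, with Gaussian-type tails, and $t \mapsto \discretized\pi_t(x)$ is smooth with derivatives obtained by differentiating under the expectation. All of this, together with $\Exs\|b(\discretized X_{k\stepsize})\|_2 < \infty$ (Lipschitz drift plus finite moments), also guarantees that $\discretized b_t(x) = \Exs[b(\discretized X_{k\stepsize}) \mid \discretized X_t = x]$ is well-defined and that $\discretized b_t \discretized\pi_t$ is integrable and decays at infinity.

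\textbf{It\^o's formula and the weak form.} For $\varphi \in C_c^\infty(\realdim)$, applying It\^o's formula to the interpolation~\eqref{eq:def-interpolation} gives, for $t \in [k\stepsize,(k+1)\stepsize]$,
\[
\varphi(\discretized X_t) = \varphi(\discretized X_{k\stepsize}) + \int_{k\stepsize}^t \Big( \nabla\varphi(\discretized X_s) \cdot b(\discretized X_{k\stepsize}) + \tfrac12 \Delta\varphi(\discretized X_s) \Big) ds + \int_{k\stepsize}^t \nabla\varphi(\discretized X_s) \cdot d\discretized B_s .
\]
Since $\nabla\varphi$ is bounded, the stochastic integral is a true, mean-zero martingale; taking expectations and differentiating in $t$ yields
\[
\frac{d}{dt}\Exs[\varphi(\discretized X_t)] = \Exs\big[ \nabla\varphi(\discretized X_t) \cdot b(\discretized X_{k\stepsize}) \big] + \tfrac12 \Exs\big[\Delta\varphi(\discretized X_t)\big].
\]

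\textbf{Conditioning and integration by parts.} The conceptual step is to rewrite the first term via the tower property, conditioning on $\discretized X_t$:
\[
\Exs\big[ \nabla\varphi(\discretized X_t) \cdot b(\discretized X_{k\stepsize}) \big] = \Exs\big[ \nabla\varphi(\discretized X_t) \cdot \discretized b_t(\discretized X_t) \big] = \int \nabla\varphi(x) \cdot \discretized b_t(x)\, \discretized\pi_t(x)\, dx .
\]
Writing the remaining expectations as integrals against $\discretized\pi_t$ and integrating by parts — legitimate since $\varphi$ has compact support and $\discretized b_t\discretized\pi_t$, $\nabla\discretized\pi_t$ decay at infinity — gives
\[
\int \varphi(x)\, \frac{\partial \discretized\pi_t}{\partial t}(x)\, dx = \int \varphi(x) \Big( - \nabla\cdot\big(\discretized\pi_t \discretized b_t\big)(x) + \tfrac12 \Delta\discretized\pi_t(x) \Big) dx .
\]
As $\varphi \in C_c^\infty$ is arbitrary, equation~\eqref{eq:fokker_inter} holds in the distributional sense, and the smoothness of $\discretized\pi_t$ established above (equivalently, interior parabolic regularity) upgrades it to a classical identity on $(k\stepsize,(k+1)\stepsize]$.

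\textbf{Main obstacle.} The argument is short once the conditional-expectation drift is in place; the only genuine work is the bookkeeping in the first step — verifying that $\discretized\pi_t$ is truly smooth, that $t \mapsto \discretized\pi_t$ is differentiable and differentiation under the expectation is valid, and that $\discretized b_t \discretized\pi_t$ decays fast enough to integrate by parts. All of this rests on the explicit conditional-Gaussian structure of the interpolation~\eqref{eq:def-interpolation}, the Lipschitz assumption on $b$, and finiteness of the moments of $\discretized X_{k\stepsize}$, so I would treat it carefully but expect no surprises; the tower-property identity in the last step, although the heart of the construction, is immediate.
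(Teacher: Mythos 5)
Your proposal is correct, but it takes a genuinely different route from the paper. The paper argues at the level of densities: conditionally on $\discretized{\mathcal{F}}_{k\stepsize}$, the process is a Brownian motion with constant drift started from a Dirac mass, so its conditional density satisfies an explicit Fokker--Planck equation; the paper then takes expectations of that PDE over the conditioning and justifies interchanging $\partial_t$, $\nabla\cdot$ and $\Delta$ with the expectation by dominated convergence, using Gaussian-tail estimates on $\discretized{\pi}_{k\stepsize}$ (its Lemma~\ref{lemma-coarse-estimate}), with Bayes' rule identifying the averaged drift as $\discretized{b}_t(x)=\mathbb{E}(b(\discretized{X}_{k\stepsize})\mid\discretized{X}_t=x)$. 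You instead work with the weak formulation: It\^o's formula applied to a compactly supported test function, the martingale property of the stochastic integral, the tower property to introduce $\discretized{b}_t$, and integration by parts, followed by an upgrade from the distributional to the classical identity using the smoothness of the Gaussian-mixture density $\discretized{\pi}_t$. Your route is more elementary in that it avoids manipulating the conditional PDE with Dirac initial data and never needs to differentiate the conditional density in $t$; the tower-property step is indeed the whole conceptual content. What it costs is that the regularity bookkeeping does not disappear but moves to the end: to pass from the weak identity $\int\varphi\,\partial_t\discretized{\pi}_t=\int\varphi\bigl(-\nabla\cdot(\discretized{\pi}_t\discretized{b}_t)+\tfrac12\Delta\discretized{\pi}_t\bigr)$ to the pointwise equation you must still verify (by differentiation under the expectation, exactly the dominated-convergence work the paper does) that $\discretized{\pi}_t$ is smooth in $(t,x)$ and that $\discretized{b}_t\discretized{\pi}_t=\mathbb{E}\bigl[b(\discretized{X}_{k\stepsize})\,\varphi_{(t-k\stepsize)I}(x-\discretized{X}_{k\stepsize}-(t-k\stepsize)b(\discretized{X}_{k\stepsize}))\bigr]$ is $C^1$ in $x$, which requires the same moment and tail control on $\discretized{X}_{k\stepsize}$; you flag this, so the argument is complete at the paper's own level of rigor. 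Both treatments share the minor endpoint caveat that the classical identity is really obtained for $t>k\stepsize$, with the value at $t=k\stepsize$ handled by one-sided limits.
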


See Section~\ref{sec:appendix-fokker-planck} for the proof of this
lemma. The key observation is that, conditioned on the $\sigma$-field
$\discretized{\mathcal{F}}_{k \stepsize} = \sigma(\discretized{X}_t:0 \leq t \leq k \stepsize)$,
the process $\left \{ (\discretized{X}_t \mid\discretized{\mathcal{F}}_{k \stepsize}) \,
\mid \, t \in [\stepsize, (k+1) \stepsize] \right \}$ is a Brownian motion with
constant drift, whose conditional density $\discretized{\pi}_t \mid
\discretized{\mathcal{F}}_{k \stepsize}$ satisfies a Fokker-Planck equation. Taking
the expectation on both sides, and interchanging the integral with the
derivatives, we obtain the Fokker-Planck equation for the density
$\discretized{\pi}_t$ unconditionally.

In Lemma~\ref{lemma-Fokker-Planck-for-interploated}, we have a
Fokker-Planck equation with time-varying coefficients; it is satisfied
by the one-time marginal densities of the continuous-time
interpolation for \eqref{eq:langevin-euler}. This
representation provides convenient tool for bounding the time
derivative of KL divergence, a task to which we turn in the next
section.

\subsection{Proof of Lemma~\ref{lemma-Fokker-Planck-for-interploated}}
\label{sec:appendix-fokker-planck}

We first consider the conditional distribution of $(\discretized{X}_t:k \stepsize
\leq t \leq (k+1)\stepsize)$, conditioned on $\discretized{\mathcal{F}}_{k
  \stepsize}$. At time $t=k \stepsize$, it starts with an atomic mass (viewed as
Dirac $\delta$-function at point $\discretized{X}_{k \stepsize}$, which is a member
of the tempered distribution space $\mathcal{S}'$~\citep[see,
  e.g.,][]{rudin-functional}. Its derivatives and Hessian are
well-defined as well.)  For $t>k \stepsize$, this conditional density
follows the Fokker-Planck equation for a Brownian motion with constant
drift:
\begin{align}
\label{eq:fokker_bm}
\frac{\partial
  \left(\discretized{\pi}_t|_{\discretized{\mathcal{F}}_{k \stepsize}}\right)}{\partial
  t}= - \nabla \cdot \left(\discretized{\pi}_t|_{\discretized{\mathcal{F}}_{k \stepsize}}b(\discretized{X}_{k \stepsize})\right)+
\frac{1}{2}\Delta \discretized{\pi}_t|_{\discretized{\mathcal{F}}_{k \stepsize}},
\end{align}
where the partial derivatives are in terms of the dummy variable
$x$. Take expectations of both sides of \eqref{eq:fokker_bm}. By interchanging derivative and integration, we obtain the following identities.  Rigorous justification are provided below.
\begin{subequations}
\begin{align}
     \mathbb{E}\left(\frac{\partial
    \discretized{\pi}_t|_{\discretized{\mathcal{F}}_{k \stepsize}}}{\partial t}(x)\right) & =
  \frac{\partial \discretized{\pi}_t}{\partial t}(x)\label{subeq-id-time-derivative} \\ 
 	\mathbb{E} \left(\nabla
        \left(\discretized{\pi}_t|_{\discretized{\mathcal{F}}_{k \stepsize}}(x)b(\discretized{X}_{k \stepsize})
        \right) \right)
        & = \nabla \cdot
        \left(\discretized{\pi}_t(x)\mathbb{E}\left(b(\discretized{X}_{k \stepsize})\big|
        \discretized{X}_t=x \right)\right)\label{subeq-id-gradient-drift}\\
        \mathbb{E}\left(\Delta
\discretized{\pi}_t|_{\discretized{\mathcal{F}}_{k \stepsize}}\right) &= \Delta \discretized{\pi}_t.\label{subeq-id-laplacian}
\end{align}
\end{subequations}
\paragraph{Proof of equation~\eqref{subeq-id-time-derivative}:}
 We show:
\begin{align*}
  \mathbb{E}\left(\frac{\partial \discretized{\pi}_t
    \mid_{\discretized{\mathcal{F}}_{k \stepsize}}}{\partial
    t}(x)\right) =\int_{\real^d}\discretized{\pi}_{k
    \stepsize}(y)\frac{\partial \discretized{\pi}_t
    \mid_{\discretized{\mathcal{F}}_{k \stepsize}}}{\partial t}(x|y)
  dy \stackrel{(i)}{=}\frac{\partial}{\partial t}
  \int_{\real^d}\discretized{\pi}_{k \stepsize}(y) \discretized{\pi}_t
  \mid_{\discretized{\mathcal{F}}_{k \stepsize}}(x|y) dy =
  \frac{\partial \discretized{\pi}_t}{\partial t}(x),
  \end{align*}
  Applying Lemma~\ref{lemma-coarse-estimate} in Appendix~\ref{sect:appendix-coarse-estimates}, we can show that the
  density $\discretized{\pi}_{k \stepsize}$ has a tail decaying as
  $Ce^{- r\Vert y\Vert^2 }$. We then note that $\frac{\partial
    \discretized{\pi}_t|_{\discretized{\mathcal{F}}_{k
        \stepsize}}}{\partial t}(x|y)$ is equal to the semigroup
  generator of the conditional Brownian motion with constant drift,
  which also decays exponentially with $y$, in a small neighborhood of
  $t$, for fixed $x$. So the quantity $\discretized{\pi}_{k
    \stepsize}(y)\frac{\partial
    \discretized{\pi}_t|_{\discretized{\mathcal{F}}_{k
        \stepsize}}}{\partial t}(x|y)$ has a dominating function of
  the form of $C(1 + \Vert y\Vert) e^{-r \Vert y\Vert^2}$ in a small
  neighborhood of $t$.  Combining with the dominated convergence
  theorem justifies step (i).

\paragraph{Proof of equation~\eqref{subeq-id-gradient-drift}:} We have:
\begin{align*}
 	\mathbb{E} \left(\nabla
        \left(\discretized{\pi}_t|_{\discretized{\mathcal{F}}_{k
            \stepsize}}(x)b(\discretized{X}_{k \stepsize}) \right)
        \right)& = \int_{\real^d } \discretized{\pi}_{k
          \stepsize}(y) \nabla_x \cdot \left(
        \discretized{\pi}_t|_{\discretized{\mathcal{F}}_{k
            \stepsize}}(x|y) b(y) \right) dy\\ & \stackrel{(i)}{=}
        \nabla_x \cdot \int_{\real^d } \discretized{\pi}_{k
          \stepsize}(y)
        \discretized{\pi}_t|_{\discretized{\mathcal{F}}_{k
            \stepsize}}(x|y) b(y) dy\\ & \stackrel{(ii)}{=} \nabla
        \cdot
        \left(\discretized{\pi}_t(x)\mathbb{E}\left(b(\discretized{X}_{k
          \stepsize})\big| \discretized{X}_t=x \right)\right).
\end{align*}
In order to justify step (i), we first note that, according to
Assumption~\ref{assume-lipschitz-drift}, both of the functions $y
\mapsto b(y)$ and $y \mapsto \nabla_x \log
\discretized{\pi}_t|_{\discretized{\mathcal{F}}_{k \stepsize}}(x|y) $
grow at most linearly in $y$, for fixed $t$. By the rapid decay of the
tail of $\discretized{\pi}_t$ shown in
Lemma~\ref{lemma-coarse-estimate}, and the decay of the tail of
$\discretized{\pi}_t|_{\discretized{\mathcal{F}}_{k \stepsize}}(x|y)$
obtained by elementary results on the Gaussian density, we have a
dominating function of the form of $C ( 1 + \Vert y\Vert^2) e^{- r
  \Vert y\Vert^2}$. This justifies $(i)$ by the dominated convergence
theorem. Then $(ii)$ simply follows from the Bayes rule.
\paragraph{Proof of equation\eqref{subeq-id-laplacian}:} We similarly have:
\begin{align*}
    \mathbb{E}\left(\Delta
\discretized{\pi}_t|_{\discretized{\mathcal{F}}_{k \stepsize}}(x)\right) =  \int_{\real^d} \Delta_x \left(\discretized{\pi}_t|_{\discretized{\mathcal{F}}_{k \stepsize}}(x|y) \right) \discretized{\pi}_{k \stepsize}(y) dy.
\end{align*}
Note that $\Delta p (x) = (\Delta \log p + \Vert \nabla \log p\Vert^2 )p$ for any density function $p$. Since $\log \discretized{\pi}_t|_{\discretized{\mathcal{F}}_{k \stepsize}}(x|y)$ is a quadratic function in the variable $x$, its gradient is linear (it also grows at most linearly with $\Vert y\Vert$), and its Laplacian is constant. Therefore, we have a dominating function of form $C( 1 + \Vert y\Vert^2 )e^{- r\Vert y\Vert^2}$ for the integrand, which guarantees the interchange between the integral and the Laplacian operator. This leads to $\mathbb{E}\left(\Delta
\discretized{\pi}_t|_{\discretized{\mathcal{F}}_{k \stepsize}}(x)\right) = \Delta \discretized{\pi}_t (x)$.

Combining these identities yields
\begin{align*}
	\frac{\partial \discretized{\pi}_t}{\partial t}(x)=\nabla \cdot
        \left(\discretized{\pi}_t(x)\discretized{b}_t(x)\right)+\frac{1}{2}\Delta
        \discretized{\pi}_t,\quad t \in[k \stepsize,(k+1)\stepsize],
\end{align*}
where $\discretized{b}_t(x)=\mathbb{E}\left(b(\discretized{X}_{k \stepsize})\big|
\discretized{X}_t=x \right)$ for $t \in[k \stepsize,(k+1)\stepsize]$.

\section{Controlling the KL divergence: Proof of Proposition~\ref{prop-KL-derivative-fisher-moment}}
\label{sec:kl}

We now turn to the proof of
Proposition~\ref{prop-KL-derivative-fisher-moment}, which involves
bounding the derivative
\mbox{$\frac{d}{dt}\kull{\discretized{\pi}_t}{\pi_t)}$}. We first compute
the derivative using the Fokker-Planck equation established in
Lemma~\ref{lemma-Fokker-Planck-for-interploated}, and then upper bound
it by a regularity estimate of the density $\discretized{\pi}_{k \stepsize}$ and
moment bounds on $\discretized{X}_{k \stepsize}$. The key geometric intuition
underlying our argument is the following: if the drift $b$ is
second-order smooth and the initial distribution at each step is also
smooth, most of the Gaussian noise is cancelled out, and only
higher-order terms remain. This intuition is fleshed out in
Section~\ref{subsection-geometric-intuition}.

In the following lemma, we give an explicit upper bound on the KL
divergence between the one-time marginal distributions of the
interpolated process and the original diffusion, based on
Fokker-Planck equations derived above.
\begin{lemma}
\label{lemma-kl-time-derivative}
Suppose that the densties $\pi$ and $\discretized \pi$ satisfy the
Fokker-Planck equations~\eqref{eq:fokker_langevin}
and~\eqref{eq:fokker_inter}, respectively. Then
\begin{align}
\label{eq:boundkl}
\frac{d}{dt}\kull{\discretized{\pi}_t}{\pi_t} \leq
\frac{1}{2}\int_{\real^d}\discretized{\pi}_t(x)\Vert
\discretized{b}_t(x)-b(x)\Vert_2^2dx.
\end{align}
\end{lemma}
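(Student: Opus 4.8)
The plan is to run the classical entropy-dissipation argument: differentiate the KL functional in time and substitute the two Fokker--Planck equations. Set $g_t \defn \log\bigl(\discretized{\pi}_t/\pi_t\bigr)$, so that $\kull{\discretized{\pi}_t}{\pi_t} = \int \discretized{\pi}_t\, g_t$. Since $\int \discretized{\pi}_t = 1$ for every $t$, we have $\int \partial_t \discretized{\pi}_t = 0$, and the product rule together with $\partial_t g_t = \partial_t\discretized{\pi}_t/\discretized{\pi}_t - \partial_t\pi_t/\pi_t$ gives
\[
\frac{d}{dt}\kull{\discretized{\pi}_t}{\pi_t}
 = \int (\partial_t\discretized{\pi}_t)\, g_t + \int \discretized{\pi}_t\, \partial_t g_t
 = \int (\partial_t\discretized{\pi}_t)\, g_t \;-\; \int \frac{\discretized{\pi}_t}{\pi_t}\, \partial_t \pi_t .
\]

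Next I would plug in $\partial_t\discretized{\pi}_t = -\nabla\cdot(\discretized{\pi}_t \discretized{b}_t)+\tfrac12\Delta\discretized{\pi}_t$ from~\eqref{eq:fokker_inter} and $\partial_t\pi_t = -\nabla\cdot(\pi_t b)+\tfrac12\Delta\pi_t$ from~\eqref{eq:fokker_langevin}, and integrate by parts to move every spatial derivative onto $g_t$. Using the elementary identities $\nabla(\discretized{\pi}_t/\pi_t) = (\discretized{\pi}_t/\pi_t)\,\nabla g_t$ and $\nabla\log\discretized{\pi}_t - \nabla\log\pi_t = \nabla g_t$, the four resulting integrals collapse to the exact identity
\[
\frac{d}{dt}\kull{\discretized{\pi}_t}{\pi_t}
 = \int \discretized{\pi}_t(x)\,\langle \discretized{b}_t(x)-b(x),\, \nabla g_t(x)\rangle\, dx
   \;-\; \frac12 \int \discretized{\pi}_t(x)\,\Vert \nabla g_t(x) \Vert_2^2\, dx .
\]
Applying Young's inequality $\langle u,v\rangle \le \tfrac12\Vert u\Vert_2^2 + \tfrac12\Vert v\Vert_2^2$ pointwise with $u = \discretized{b}_t(x)-b(x)$ and $v = \nabla g_t(x)$, the positive Fisher-information-type term exactly cancels the $-\tfrac12\int\discretized{\pi}_t\Vert\nabla g_t\Vert_2^2$ contribution, leaving $\tfrac12\int\discretized{\pi}_t\Vert\discretized{b}_t - b\Vert_2^2$, which is the claimed bound~\eqref{eq:boundkl}.

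The algebra above is routine; the real work is analytic justification, and I expect this to be the only genuine obstacle. First, the computation takes place on the open subinterval $t\in(k\stepsize,(k+1)\stepsize)$, where $\discretized{\pi}_t$ --- a Gaussian convolution of the law of $\discretized{X}_{k\stepsize}$ with a constant-drift Brownian transition kernel --- is smooth and strictly positive, so that $g_t$ and $\nabla g_t$ are well defined pointwise and the identities above make sense. Second, both differentiation under the integral sign in $\tfrac{d}{dt}\int\discretized{\pi}_t g_t$ and the vanishing of all boundary terms at infinity in the integrations by parts must be justified; this can be done exactly as in the proof of Lemma~\ref{lemma-Fokker-Planck-for-interploated}, using the Gaussian-type tail decay of $\discretized{\pi}_t$, $\pi_t$, and their first two derivatives (Lemma~\ref{lemma-coarse-estimate}) against the at-most-linear growth of $b$ and of $\discretized{b}_t$. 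Finally, one should check that $\kull{\discretized{\pi}_t}{\pi_t}$ is finite and continuous across the grid points so that integrating this differential inequality over $[0,T]$ later is legitimate; this too follows from the coarse density estimates.
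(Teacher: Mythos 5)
Your proposal is correct and follows essentially the same route as the paper: differentiate $\int \discretized{\pi}_t \log(\discretized{\pi}_t/\pi_t)$, substitute both Fokker--Planck equations, integrate by parts to reach the exact identity with the cross term and the negative Fisher-type term, and finish with Young's inequality, with the interchange of derivative/integral and the boundary terms justified by the Gaussian-tail estimates of Lemma~\ref{lemma-coarse-estimate}. The only cosmetic difference is that you use $\int \partial_t \discretized{\pi}_t = 0$ to drop the constant term that the paper carries as the ``$+1$'' in $\log\discretized{\pi}_t + 1 - \log\pi_t$.
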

\noindent See Appendix~\ref{app:proof} for the proof of this claim.

It is worth noting the key difference between our approach and the
method of~\citet{dalalyan2017theoretical}, which is based on the
Girsanov theorem.  His analysis controls the KL divergence via the
quantity $\int_0^T \mathbb{E}\Vert b(\discretized{X}_{k
  \stepsize})-b(\discretized{X}_t)\Vert_2^2dt$, a term which scales as $O(\stepsize)$
even for the simple case of the Ornstein-Uhlenbeck process.  Indeed,
the Brownian motion contributes to an $O(\stepsize)$ oscillation in $\Vert
\discretized{X}_{k \stepsize}-\discretized{X}_t \Vert_2^2$, dominating other lower-order
terms.  By contrast, we control the KL divergence using the quantity
$\int_0^T \mathbb{E}\Vert
\discretized{b}_t(\discretized{X}_t)-b(\discretized{X}_t)\Vert_2^2dt$.  Observe that
$\discretized{b}_t$ is exactly the backward conditional expectation of
$b(\discretized{X}_{k \stepsize})$ conditioned on the value of $\discretized{X}_t$. Having
the conditional expectation inside (rather than outside) the norm
enables the lower-order oscillations to cancel out.

In the remainder of this section, we focus on bounding the integral on
the right-hand side of \eqref{eq:boundkl}. Since the
difference between $\discretized{X}_{k \stepsize}$ and $\discretized{X}_t$ comes mostly
from an isotropic noise, we may expect it to mostly cancel out. In
order to exploit this intuition, we use the third-order smoothness
condition (see Assumption~\ref{assume-smooth-drift}) so as to perform
the Taylor expansion
\begin{align}
\label{eq:taylor}
  \discretized{b}_t(x) - b(x) = \mathbb{E} \left(b(\discretized{X}_{k \stepsize}) -
  b(\discretized{X}_t) \big | \discretized{X}_t = x \right) =\nabla b(x) \mathbb{E}
  \left( \discretized{X}_{k \stepsize} - \discretized{X}_t \big | \discretized{X}_t=x \right) +
  \discretized{r}_t(x).
\end{align}
The reminder term $\discretized{r}_t(x)=\mathbb{E}\left(\int_0^1 s \nabla^2 b\left( (1-s)
\discretized{X}_t+s \discretized{X}_{k \stepsize} \right) \left[\discretized{X}_{k
    \stepsize}-\discretized{X}_t,\discretized{X}_{k \stepsize}-\discretized{X}_t \right] ds
\Big|\discretized{X}_t=x \right)$ is relatively easy to control, since it
contains a $\Vert \discretized{X}_{k \stepsize}-\discretized{X}_t \Vert_2^2$ factor, which
is already of order $O(\stepsize)$.  More formally, we have:
\begin{lemma}
\label{lemma-remainder}
Let us define $ \discretized{r}_t(x)\defn \mathbb{E}\left(\int_0^1
s \nabla ^2b((1\!-s)\discretized{X}_t\!+s \discretized{X}_{k \stepsize})[\discretized{X}_{k \stepsize}\!-\!\discretized{X}_t,\discretized{X}_{k \stepsize}\!-\!\discretized{X}_t]ds \Big|\discretized{X}_t\!=\!x \right)$. We have under Assumptions~\ref{assume-lipschitz-drift}
and~\ref{assume-smooth-drift}:
\begin{align*}
  \mathbb{E}\Vert \discretized{r}_t(\discretized{X}_t)\Vert_2^2 \leq
  8(t-k \stepsize)^4\hessianlip^2 \left(A_0^4+\smooth^4 \mathbb{E}\Vert
  \discretized{X}_{k \stepsize}\Vert_2^4 \right)+24(t-k \stepsize)^2\hessianlip^2 d^2, \text{
    for any } t \in[k \stepsize,(k+1)\stepsize).
\end{align*}
\end{lemma}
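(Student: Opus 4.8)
The plan is to bound $\Vert \discretized{r}_t(x)\Vert_2$ pointwise via two applications of the conditional Jensen inequality and then reduce everything to a fourth-moment bound on the one-step displacement $\discretized{X}_{k\stepsize}-\discretized{X}_t$. Since $\discretized{r}_t(x)$ is a conditional expectation and $\Vert\cdot\Vert_2$ is convex,
\begin{align*}
\Vert \discretized{r}_t(x)\Vert_2 \;\leq\; \mathbb{E}\Big( \int_0^1 s\, \opnorm{\nabla^2 b((1-s)\discretized{X}_t + s\discretized{X}_{k\stepsize})}\,\Vert \discretized{X}_{k\stepsize} - \discretized{X}_t\Vert_2^2 \, ds \;\Big|\; \discretized{X}_t = x \Big).
\end{align*}
Assumption~\ref{assume-smooth-drift} is exactly what lets me replace the inner operator norm by $\hessianlip$: reading the Jacobian-Lipschitz bound as a difference quotient shows $\Vert \nabla^2 b(\xi)[u,u]\Vert_2 \leq \hessianlip\Vert u\Vert_2^2$ uniformly in $\xi$. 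Using $\int_0^1 s\,ds = \tfrac12$ and then applying Jensen a second time to pass the square through the remaining conditional expectation, I get
\begin{align*}
\mathbb{E}\Vert \discretized{r}_t(\discretized{X}_t)\Vert_2^2 \;\leq\; \frac{\hessianlip^2}{4}\, \mathbb{E}\Vert \discretized{X}_{k\stepsize} - \discretized{X}_t\Vert_2^4 .
\end{align*}

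It then remains to bound $\mathbb{E}\Vert \discretized{X}_{k\stepsize} - \discretized{X}_t\Vert_2^4$. From the interpolation formula~\eqref{eq:def-interpolation}, $\discretized{X}_t - \discretized{X}_{k\stepsize} = (t-k\stepsize)\,b(\discretized{X}_{k\stepsize}) + (\discretized{B}_t - \discretized{B}_{k\stepsize})$, and the Lipschitz bound $\Vert b(x)\Vert_2 \leq A_0 + \smooth\Vert x\Vert_2$ (Assumption~\ref{assume-lipschitz-drift}, with $A_0 = \Vert b(0)\Vert_2$) gives the three-term bound $\Vert\discretized{X}_t - \discretized{X}_{k\stepsize}\Vert_2 \leq (t-k\stepsize)A_0 + (t-k\stepsize)\smooth\Vert\discretized{X}_{k\stepsize}\Vert_2 + \Vert\discretized{B}_t - \discretized{B}_{k\stepsize}\Vert_2$. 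Raising this to the fourth power (using $(a+b+c)^4 \leq 27(a^4+b^4+c^4)$) and taking expectations leaves three pieces: a deterministic $(t-k\stepsize)^4 A_0^4$, a term $(t-k\stepsize)^4 \smooth^4\,\mathbb{E}\Vert\discretized{X}_{k\stepsize}\Vert_2^4$, and the Gaussian fourth moment $\mathbb{E}\Vert\discretized{B}_t - \discretized{B}_{k\stepsize}\Vert_2^4 = (t-k\stepsize)^2 d(d+2) \leq 3(t-k\stepsize)^2 d^2$ (since $\discretized{B}_t - \discretized{B}_{k\stepsize}\sim\mathcal{N}(0,(t-k\stepsize)I_d)$). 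Substituting into the previous display and collecting the numerical constants yields the claimed inequality with the stated $8$ and $24$.

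I do not expect a genuine obstacle here: the content of the lemma is the cancellation already built into the definition of $\discretized{r}_t$ through its $\Vert\discretized{X}_{k\stepsize} - \discretized{X}_t\Vert_2^2$ factor, which is of order $O(\stepsize)$, so the whole estimate is variance-type. The only point requiring a little care is the transfer of Assumption~\ref{assume-smooth-drift}, stated as Lipschitz continuity of the Jacobian $\nabla b$, into the uniform bound on $\Vert\nabla^2 b(\xi)[u,u]\Vert_2$ used above; this is also what makes the Taylor-remainder integrand well-defined and integrable, and the Gaussian-type tail estimate for $\discretized{\pi}_{k\stepsize}$ from Lemma~\ref{lemma-coarse-estimate} guarantees that $\mathbb{E}\Vert\discretized{X}_{k\stepsize}\Vert_2^4$ is finite so the manipulations are legitimate. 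The remainder is bookkeeping: the crude inequalities must be chosen consistently so the final constants land at $8$ and $24$, and there is in fact slack (using the exact Gaussian moment identity or dropping the odd-in-increment cross terms only sharpens them).
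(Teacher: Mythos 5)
Your proposal is correct and follows essentially the same route as the paper's proof: conditional Jensen to pull the norm inside, the uniform bound $\opnorm{\nabla^2 b}\leq \hessianlip$ from Assumption~\ref{assume-smooth-drift}, the factor $\int_0^1 s\,ds=\tfrac12$, a second Jensen step to reduce to $\mathbb{E}\Vert \discretized{X}_{k\stepsize}-\discretized{X}_t\Vert_2^4$, and then the drift-plus-Brownian-increment decomposition with $\Vert b(x)\Vert_2\leq A_0+\smooth\Vert x\Vert_2$ and the Gaussian fourth moment. The only difference is bookkeeping (your single three-term split with $(a+b+c)^4\leq 27(a^4+b^4+c^4)$ versus the paper's nested two-term splits), and your constants $\tfrac{27}{4}$ and $\tfrac{81}{4}$ indeed fall below the stated $8$ and $24$, so the claimed inequality follows.
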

\noindent See Appendix~\ref{app:proof} for the proof of this claim.

It remains to control the first order term.  From
Assumption~\ref{assume-lipschitz-drift}, the Jacobian norm $\opnorm{
\nabla b(x)}$ is at most $\smooth$; accordingly, we only need to
control the norm of the vector $\mathbb{E}\left(\discretized{X}_{k \stepsize} -
\discretized{X}_t \big| \discretized{X}_t=x \right)$.  It corresponds to the
difference between the best prediction about the past of the path and
the current point, given the current information.  Herein lies the
main technical challenge in the proof of
Proposition~\ref{prop-KL-derivative-fisher-moment}, apart from the
construction of the Fokker-Planck equation for the interpolated
process. Before entering the technical details, let us first provide
some geometric intuition for the argument.


\subsection{Geometric Intuition}
\label{subsection-geometric-intuition}

Suppose that we were dealing with the conditional expectation of
$\discretized{X}_t$, conditioned on $\discretized{X}_{k \stepsize}$; in this case, the
Gaussian noise would completely cancel out (see \eqref{eq:def-interpolation}). However, we are indeed
reasoning backward, and $\discretized{X}_t$ itself is dependent with the
Gaussian noise $\int_{k \stepsize}^t d \discretized B_s$ added to this process.
It is unclear whether the cancellation occurs when
computing $\mathbb{E}\left(\discretized{X}_{k \stepsize}\big|\discretized{X}_t
\right)-\discretized{X}_t$. In fact, it occurs only under particular
situations, which turn out to typical for the
discretized process.
\begin{figure}[t]
  \begin{center}
    \begin{tabular}{ccc}
      \includegraphics[width=0.45 \linewidth]{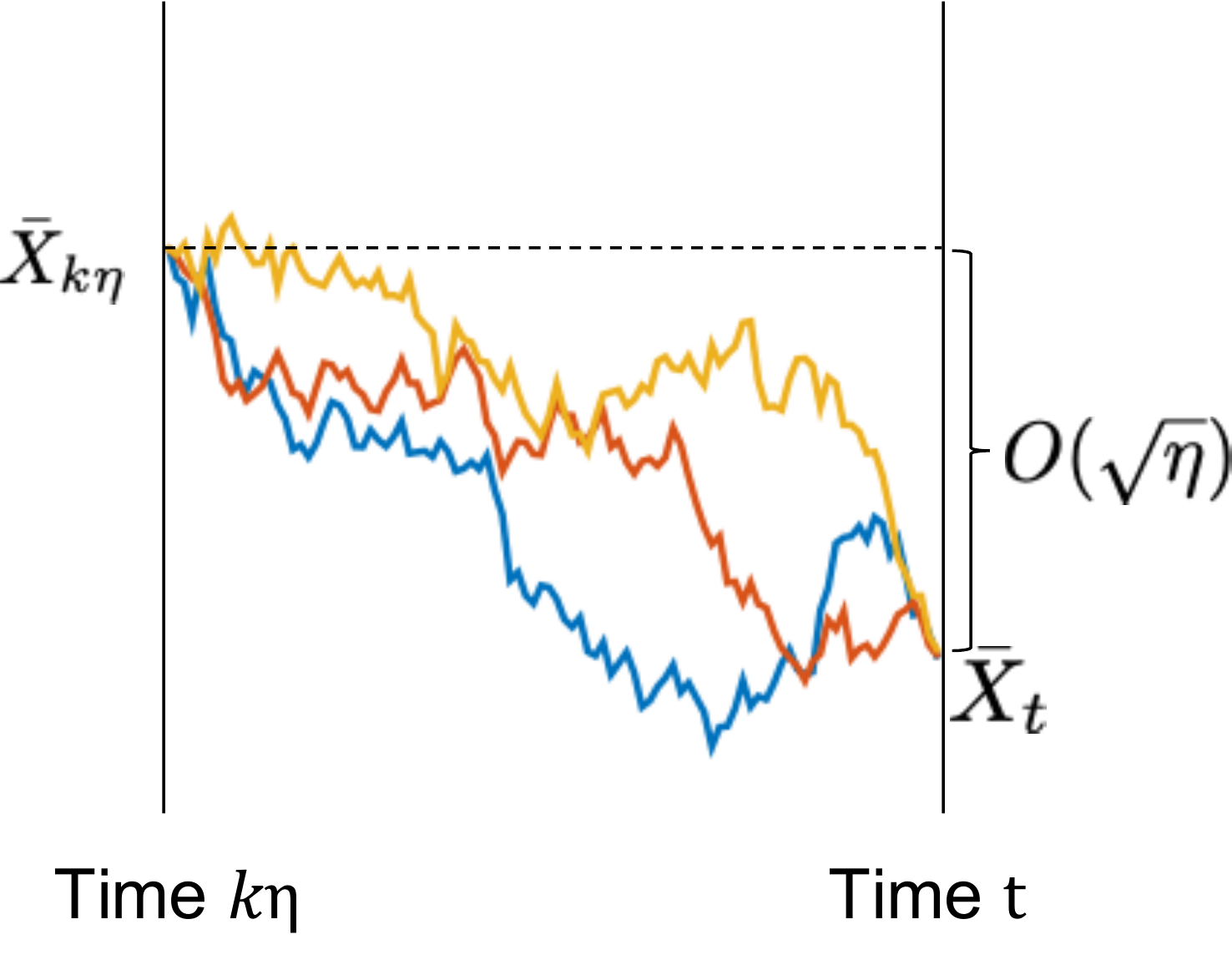} &&
      \includegraphics[width=0.45 \linewidth]{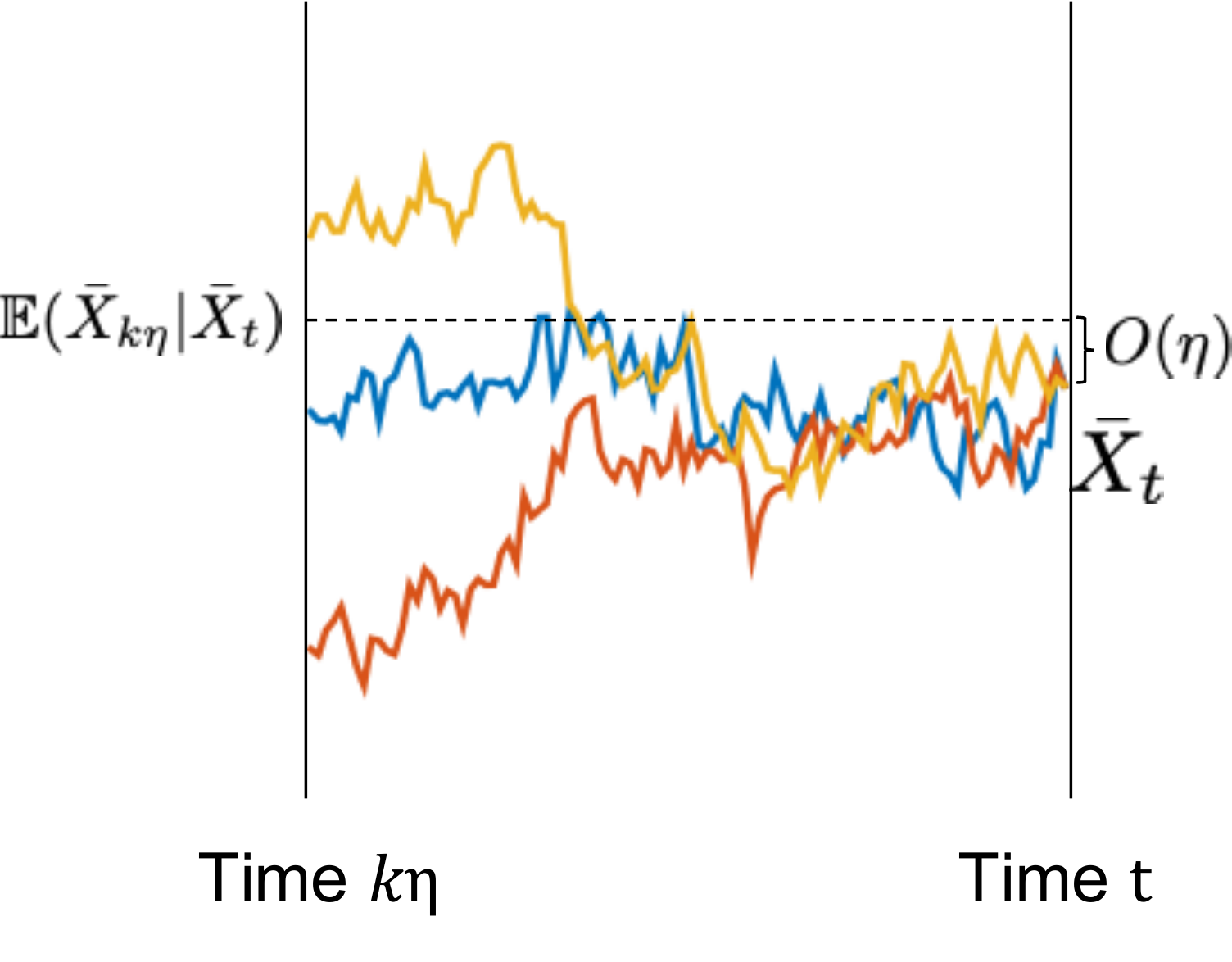} \\
      (a) && (b)
    \end{tabular}
    \caption{(a) Non-smooth initial distribution leads to
      $O(\sqrt{\stepsize})$ scaling.  (b) Smooth initial distribution:
      Oscillation cancels and leads to $O(\stepsize)$ error.}
\label{figure-bad-good}
  \end{center}
  \vspace{-.8cm}
\end{figure}

Due to the dependence between $\discretized{X}_t$ and Gaussian noise, we
cannot expect cancellation to occur in
general. Figure~\ref{figure-bad-good}(a) illustrates an extremal case,
where the initial distribution at time $k \stepsize$ is an atomic mass.
When we condition on the value at $\discretized{X}_{t}$ as well, the process
behaves like a Brownian bridge.  Consequently, it
makes no difference whether the conditional expectation is inside or
outside the norm: in either case, there is a term of the form $\Vert
\discretized{X}_{k \stepsize}-\discretized{X}_t \Vert_2$, which scales as $O(\sqrt{\stepsize})$.

On the other hand, as illustrated in Figure~\ref{figure-bad-good}(b),
if the initial distribution is uniform over some region, the initial
point is almost equally likely to be from anywhere around $\discretized{X}_t$, up
to the drift term, and most of the noise gets cancelled out. In
general, if the initial distribution is smooth, locally it looks
almost uniform, and similar phenomena should also hold true. Thus
we expect $\mathbb{E}(\discretized{X}_{k \stepsize}|\discretized{X}_t)-\discretized{X}_t$ to be
decomposed into terms coming from the drift and terms coming from the
smoothness of the initial distribution.


\subsection{Upper Bound via Integration by Parts}
\label{subsection-mse-to-fisher}

With this intuition in hand, we now turn to the proof itself.  In
order to leverage the smoothness of the initial distribution, we use
integration by parts to move the derivatives onto the density of
$\discretized{X}_{k \stepsize}$.  From Bayes' formula, we have
\begin{align}\label{eq:decis}
\mathbb{E}\left( \discretized{X}_{k \stepsize}\!-\!\discretized{X}_t \big| \discretized{X}_t\!=\!x \right)
\!=\! \int(y\!-\!x)p( \discretized{X}_{k \stepsize}\!=\!y|\discretized{X}_t\!=\!x)dy\! =\! \int (y\!-\!x)\textstyle
\frac{\discretized{\pi}_{k \stepsize}(y) p(\discretized{X}_t=x | \discretized{X}_{k
    \stepsize}=y)}{\discretized{\pi}_t(x)} dy.
\end{align}
Since the density $p(\discretized{X}_t = x \mid \discretized{X}_{k \stepsize} = y)$ is a
Gaussian centered at $y-(t-k \stepsize) b(y)$ with fixed covariance, the
gradient with respect to $y$ is the density itself times a linear
factor $x-y+(t-k \stepsize)b(y)$, with an additional factor depending on
the Jacobian of $b$. This elementary fact motivates a decomposition
whose goal is to express $\mathbb{E} \left( \discretized{X}_{k \stepsize} -
\discretized{X}_t \big | \discretized{X}_t = x \right)$ as the sum of the conditional
expectation of $\nabla \log \discretized{\pi}_{k \stepsize}$ and some other terms
which are easy to control.  More precisely, in order to expose a
gradient of the Gaussian density, we decompose the difference $y - x$
into three parts, namely $y-x = a_1(x,y) - a_2(x,y) - a_3(x,y)$, where
\begin{align*}
a_1(x,y) & \defn \left(I+(t-k \stepsize)\nabla b(y))(y-x+(t-k
\stepsize)b(y)\right), \\
a_2(x,y) & \defn (t-k \stepsize)\nabla b(y)(y-x+(t-k \stepsize)b(y)), \quad
\mbox{and} \\
a_3(x,y) & \defn (t-k \stepsize) b(y).
\end{align*}
We define the conditional expectations $I_i(x) \defn
\mathbb{E}\left(a_i(\discretized{X}_{k \stepsize},\discretized{X}_t)\big|\discretized{X}_{t}=x
\right)$ for $i=1,2,3$ and control the three terms separately.

Let us denote by $\varphi$ the $d$-dimensional standard Gaussian
density. The first term $I_1$ can directly be expressed in term of the gradient of $\varphi$:
\begin{align*}
    I_1(x) & = \int (I+(t-k \stepsize)\nabla b(y))(y-x+(t-k
    \stepsize)b(y))\varphi \left(\frac{x-y-(t-k \stepsize) b(y)}{\sqrt{t-k
        \stepsize}}\right)\frac{\discretized{\pi}_{k \stepsize}(y)}{\discretized{\pi}_t(x)}dy \\
& = (t-k \stepsize)\int \nabla_y \varphi \left(\frac{x-y-(t-k \stepsize)
      b(y)}{\sqrt{t-k \stepsize}}\right)\frac{\discretized{\pi}_{k
        \stepsize}(y)}{\discretized{\pi}_t(x)}dy,
\end{align*}
where we used the chain rule and   $\nabla \varphi(y)=-y \varphi(y)$. Thus, applying integration by parts, we write $I_1$ in a
revised form.
\begin{lemma}
  \label{lemma-i1-regularity}
For all $t \in[k \stepsize, (k+1)\stepsize]$, we have
\begin{align*}
I_1(x) = -( t - k \stepsize)\mathbb{E} \left(\nabla \log \discretized{\pi}_{k
  \stepsize}(\discretized{X}_{k \stepsize}) \big| \discretized{X}_t=x \right),
\end{align*}
and consequently,
\begin{align*}
    \mathbb{E}\Vert I_1(\discretized{X}_t)\Vert_2^2 \leq (t-k \stepsize)^2 \int
\discretized{\pi}_{k \stepsize}(x)\Vert \nabla \log \discretized{\pi}_{k
  \stepsize}(x)\Vert_2^2dx.
\end{align*}
\end{lemma}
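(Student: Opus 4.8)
The plan is to start from the integral representation of $I_1$ established just above the statement, namely
\[
I_1(x) = (t-k\stepsize)\int \nabla_y \varphi\!\left(\tfrac{x-y-(t-k\stepsize)b(y)}{\sqrt{t-k\stepsize}}\right)\frac{\discretized{\pi}_{k\stepsize}(y)}{\discretized{\pi}_t(x)}\,dy,
\]
and to integrate by parts in $y$ so as to transfer the gradient $\nabla_y$ off the Gaussian factor and onto $\discretized{\pi}_{k\stepsize}$. Writing $s := t-k\stepsize$ for brevity, this turns the integral into $-s\,\discretized{\pi}_t(x)^{-1}\int \varphi\!\big(\tfrac{x-y-sb(y)}{\sqrt s}\big)\,\nabla_y\discretized{\pi}_{k\stepsize}(y)\,dy$; I would then substitute $\nabla_y\discretized{\pi}_{k\stepsize}(y) = \discretized{\pi}_{k\stepsize}(y)\,\nabla\log\discretized{\pi}_{k\stepsize}(y)$.

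The next step is to recognize the remaining kernel as a conditional density. By the interpolation construction~\eqref{eq:def-interpolation}, conditionally on $\discretized{X}_{k\stepsize}=y$ the random vector $\discretized{X}_t$ is Gaussian with mean $y+sb(y)$ and covariance $sI$, so its transition density is $s^{-d/2}\varphi\!\big(\tfrac{x-y-sb(y)}{\sqrt s}\big)$; hence Bayes' rule identifies $s^{-d/2}\varphi\!\big(\tfrac{x-y-sb(y)}{\sqrt s}\big)\discretized{\pi}_{k\stepsize}(y)/\discretized{\pi}_t(x)$ with the conditional density $p(\discretized{X}_{k\stepsize}=y\mid\discretized{X}_t=x)$. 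Consequently the integral above equals $\mathbb{E}\big(\nabla\log\discretized{\pi}_{k\stepsize}(\discretized{X}_{k\stepsize})\mid\discretized{X}_t=x\big)$, which yields the claimed identity $I_1(x) = -s\,\mathbb{E}\big(\nabla\log\discretized{\pi}_{k\stepsize}(\discretized{X}_{k\stepsize})\mid\discretized{X}_t=x\big)$. The second (moment) assertion then follows by squaring, averaging over $\discretized{X}_t\sim\discretized{\pi}_t$, and applying conditional Jensen: with $Z:=\nabla\log\discretized{\pi}_{k\stepsize}(\discretized{X}_{k\stepsize})$ one has $\mathbb{E}\|I_1(\discretized{X}_t)\|_2^2 = s^2\,\mathbb{E}\big\|\mathbb{E}(Z\mid\discretized{X}_t)\big\|_2^2 \le s^2\,\mathbb{E}\|Z\|_2^2 = s^2\int\discretized{\pi}_{k\stepsize}(x)\|\nabla\log\discretized{\pi}_{k\stepsize}(x)\|_2^2\,dx$, using that $\discretized{X}_{k\stepsize}$ has law $\discretized{\pi}_{k\stepsize}$ and that $s = t-k\stepsize \le \stepsize$.

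The hard part will be making the integration by parts rigorous, i.e.\ checking that the surface terms at infinity vanish and that the relevant integrands are integrable. Since $b$ is Lipschitz (Assumption~\ref{assume-lipschitz-drift}) and $s<\tfrac{1}{2\smooth}$, one has $\|x-y-sb(y)\|_2\ge \tfrac12\|y\|_2 - C(x)$, so the Gaussian factor $\varphi\!\big(\tfrac{x-y-sb(y)}{\sqrt s}\big)$ decays like $e^{-c\|y\|_2^2}$; combined with the sub-Gaussian tail bound on $\discretized{\pi}_{k\stepsize}$ from Lemma~\ref{lemma-coarse-estimate}, this makes the boundary term vanish. For integrability of the right-hand side one needs $\nabla\discretized{\pi}_{k\stepsize}$ integrable against the Gaussian weight; here I would observe that the bound is trivial unless $\int\discretized{\pi}_{k\stepsize}\|\nabla\log\discretized{\pi}_{k\stepsize}\|_2^2$ is finite, in which case Cauchy--Schwarz gives $\int\discretized{\pi}_{k\stepsize}\|\nabla\log\discretized{\pi}_{k\stepsize}\|_2<\infty$, so that $\nabla\discretized{\pi}_{k\stepsize}=\discretized{\pi}_{k\stepsize}\nabla\log\discretized{\pi}_{k\stepsize}\in L^1(\real^d)$ (alternatively one can use that $\discretized{\pi}_{k\stepsize}$ is a Gaussian convolution of $\discretized{\pi}_{(k-1)\stepsize}$, which gives a direct pointwise bound on $\nabla\discretized{\pi}_{k\stepsize}$). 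The remaining interchanges of integral and derivative are of exactly the same flavor as those already justified in the proof of Lemma~\ref{lemma-Fokker-Planck-for-interploated}, via dominated convergence against a dominating function of the form $C(1+\|y\|_2^2)e^{-r\|y\|_2^2}$.
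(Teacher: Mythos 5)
Your proposal follows essentially the same route as the paper's own proof: integrate by parts in $y$ to move the gradient of the Gaussian kernel onto $\discretized{\pi}_{k\stepsize}$, identify the resulting kernel via Bayes' rule as the conditional density $p(\discretized{X}_{k\stepsize}=y\mid\discretized{X}_t=x)$, and then obtain the moment bound by conditional Jensen (Cauchy--Schwarz). The only difference is that you spell out the tail/domination argument justifying the vanishing boundary terms, which the paper handles by pointing to Lemma~\ref{lemma-coarse-estimate}, so the proof is correct and matches the paper's.
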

\noindent See Section~\ref{AppProofI1} for the proof of this
lemma. 

It is clear from Lemma~\ref{lemma-i1-regularity} that a regularity
estimates on the moments of $\nabla \log \discretized{\pi}_{k \stepsize}(\discretized{X}_{k
  \stepsize})$ gives an $O(\stepsize^2)$ estimates on the squared
integral. Such a bound with reasonable dimension dependence is
nontrivial to obtain. This is postponed to section~\ref{sec:regu}.

The remaining two terms are relatively easy to control, as summarized
in the following:
\begin{lemma}
  \label{lemma-i2i3-bounds} 
Under Assumption~\ref{assume-lipschitz-drift}, the following bounds
hold for all $t \in[k \stepsize, (k+1)\stepsize ]$:
\begin{subequations}
  \begin{align}
    \label{EqnI2Bound}
\mathbb{E} \Vert I_2(\discretized{X}_t) \Vert_2^2 & \leq 3 (t-k \stepsize)^3 \smooth^2
d, \quad \mbox{and} \\
\label{EqnI3Bound}
\mathbb{E} \Vert I_3(\discretized{X}_t) \Vert_2^2 & \leq 2 \stepsize^2(A_0^2+ \smooth^2
\mathbb{E} \Vert \discretized{X}_{k \stepsize} \Vert_2^2).
\end{align}
\end{subequations}
\end{lemma}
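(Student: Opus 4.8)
The plan is to bound each of the two conditional expectations directly, using only the Lipschitz bound on $b$ from Assumption~\ref{assume-lipschitz-drift} together with elementary Gaussian moment estimates for the increment $\discretized X_t - \discretized X_{k\stepsize}$. Recall from~\eqref{eq:def-interpolation} that conditionally on $\discretized{\mathcal F}_{k\stepsize}$ we have $\discretized X_t - \discretized X_{k\stepsize} = (t-k\stepsize) b(\discretized X_{k\stepsize}) + \int_{k\stepsize}^t d\discretized B_s$, so that, conditionally, this increment is Gaussian with mean $(t-k\stepsize) b(\discretized X_{k\stepsize})$ and covariance $(t-k\stepsize) I_d$.

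\textbf{Bounding $I_3$.} Here $a_3(x,y) = (t-k\stepsize)b(y)$, so $I_3(x) = (t-k\stepsize)\,\mathbb{E}\bigl(b(\discretized X_{k\stepsize}) \bigm| \discretized X_t = x\bigr)$. By Jensen's inequality applied to $\|\cdot\|_2^2$ and the tower property,
\begin{align*}
\mathbb{E}\Vert I_3(\discretized X_t)\Vert_2^2 \leq (t-k\stepsize)^2\,\mathbb{E}\Vert b(\discretized X_{k\stepsize})\Vert_2^2 \leq \stepsize^2\,\mathbb{E}\Vert b(\discretized X_{k\stepsize})\Vert_2^2.
\end{align*}
Then write $\Vert b(\discretized X_{k\stepsize})\Vert_2 \leq \Vert b(0)\Vert_2 + \smooth\Vert \discretized X_{k\stepsize}\Vert_2 = A_0 + \smooth\Vert \discretized X_{k\stepsize}\Vert_2$ using Assumption~\ref{assume-lipschitz-drift} and the definition $A_0 = \Vert b(0)\Vert_2$, and apply $(a+b)^2 \leq 2a^2 + 2b^2$ to obtain $\mathbb{E}\Vert b(\discretized X_{k\stepsize})\Vert_2^2 \leq 2A_0^2 + 2\smooth^2\,\mathbb{E}\Vert \discretized X_{k\stepsize}\Vert_2^2$, which gives~\eqref{EqnI3Bound}.

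\textbf{Bounding $I_2$.} Here $a_2(x,y) = (t-k\stepsize)\nabla b(y)\bigl(y - x + (t-k\stepsize)b(y)\bigr)$. The key observation is that $y - x + (t-k\stepsize)b(y) = -\bigl(\discretized X_t - \discretized X_{k\stepsize} - (t-k\stepsize)b(\discretized X_{k\stepsize})\bigr)$ when $(x,y) = (\discretized X_t, \discretized X_{k\stepsize})$; that is, it is exactly the negative of the Brownian increment $\int_{k\stepsize}^t d\discretized B_s$, which conditionally on $\discretized{\mathcal F}_{k\stepsize}$ is a centered Gaussian vector with covariance $(t-k\stepsize)I_d$. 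Using $\opnorm{\nabla b(y)} \leq \smooth$ (Assumption~\ref{assume-lipschitz-drift}), Jensen's inequality, and the tower property to condition first on $\discretized{\mathcal F}_{k\stepsize}$,
\begin{align*}
\mathbb{E}\Vert I_2(\discretized X_t)\Vert_2^2 &\leq (t-k\stepsize)^2\,\mathbb{E}\Bigl\Vert \nabla b(\discretized X_{k\stepsize})\,\textstyle\int_{k\stepsize}^t d\discretized B_s\Bigr\Vert_2^2 \leq (t-k\stepsize)^2\smooth^2\,\mathbb{E}\Bigl\Vert\textstyle\int_{k\stepsize}^t d\discretized B_s\Bigr\Vert_2^2 = (t-k\stepsize)^3 \smooth^2 d.
\end{align*}
This is slightly sharper than the claimed constant $3$; I would keep the constant $3$ as stated (it leaves room for a less careful application of Jensen, e.g.\ if one were to expand $\nabla b(\discretized X_{k\stepsize})\int_{k\stepsize}^t d\discretized B_s$ via an intermediate triangle inequality), giving~\eqref{EqnI2Bound}.

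I do not expect any genuine obstacle here: both bounds follow from Jensen's inequality (to move the squared norm past the conditional expectation), the triangle/Lipschitz bound on $b$ and $\nabla b$, and the fact that the relevant vectors are either $b(\discretized X_{k\stepsize})$ or the standard Gaussian increment $\int_{k\stepsize}^t d\discretized B_s$ with known second moment $(t-k\stepsize)d$. The only mild subtlety is recognizing the algebraic identity that $y - x + (t-k\stepsize)b(y)$ evaluated along the process is precisely the (negated) Brownian increment, which is what makes $I_2$ an $O(\stepsize^{3/2})$-norm quantity rather than $O(\stepsize)$.
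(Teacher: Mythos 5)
Your proposal is correct and follows essentially the same route as the paper: for $I_3$ it is identical (conditional Jensen, then the Lipschitz bound $\Vert b(y)\Vert_2\leq A_0+\smooth\Vert y\Vert_2$), and for $I_2$ you make the same key identification that $y-x+(t-k\stepsize)b(y)$ evaluated along the process is the negated Brownian increment $\int_{k\stepsize}^t d\discretized{B}_s$, then bound $\opnorm{\nabla b}\leq \smooth$ and use Jensen plus $\mathbb{E}\Vert\int_{k\stepsize}^t d\discretized{B}_s\Vert_2^2=(t-k\stepsize)d$. The only cosmetic difference is that the paper first bounds $\Vert I_2(x)\Vert_2$ pointwise via the explicit Gaussian transition density before squaring, whereas you apply conditional Jensen directly to the vector; both give the stated bounds (yours even with constant $1$ in place of the paper's slack factor $3$, which the paper also does not actually need).
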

\noindent See Section~\ref{AppProofI2I3} for the proof of this lemma. 

Combining the Taylor expansion~\eqref{eq:taylor} with the bounds from
Lemma~\ref{lemma-i1-regularity} and~\ref{lemma-i2i3-bounds} yields the
bound claimed in Proposition~\ref{prop-KL-derivative-fisher-moment}.

\subsection{Proof of Lemma~\ref{lemma-i1-regularity}}
\label{AppProofI1}

We prove here Lemma~\ref{lemma-i1-regularity} which controls the
dominant term $I_1$ of the decomposition of $\mathbb{E}\left(
\discretized{X}_{k \stepsize}\!-\!\discretized{X}_t \big|
\discretized{X}_t\!=\!x \right)$ in \eqref{eq:decis}. Recall $I_1$ is
expressed in term of the gradient of the Gaussian density:
\begin{align*}
    I_1(x) 
& = (t-k \stepsize)\int \nabla_y \varphi \left(\frac{x-y-(t-k \stepsize)
      b(y)}{\sqrt{t-k \stepsize}}\right)\frac{\discretized{\pi}_{k
        \stepsize}(y)}{\discretized{\pi}_t(x)}dy,
\end{align*}
where $\varphi$ is the $d$-dimensional standard Gaussian
density.
We first note the tail of the Gaussian density is
trivial, and the tail of $\discretized{\pi}_{k \stepsize}$ is justified by Appendix~\ref{sect:appendix-coarse-estimates}. Therefore we obtain applying integration by parts:   
\begin{align*}
  I_1(x) =&\int (I+(t-k \stepsize)\nabla b(y))(y-x+(t-k \stepsize)b(y))(2
  \pi(t-k \stepsize))^{-\frac{d}{2}}\\ &\quad \quad \quad \quad \quad
  \cdot \exp \left(-\frac{1}{2(t-k \stepsize)}\Vert x-y-(t-k \stepsize)
  b(y)\Vert_2^2 \right)\frac{\discretized{\pi}_{k
      \stepsize}(y)}{\discretized{\pi}_t(x)}dy \\ =&\int (t-k \stepsize)\nabla_y
  \exp \left(-\frac{1}{2(t-k \stepsize)}\Vert x-y-(t-k \stepsize)
  b(y)\Vert_2^2 \right)\frac{\discretized{\pi}_{k
            \stepsize}(y)}{\discretized{\pi}_t(x)}dy \\ =&-(t-k \stepsize)\int \exp
  \left(-\frac{1}{2(t-k \stepsize)}\Vert x-y-(t-k \stepsize) b(y)\Vert_2^2
  \right)\frac{\nabla_y \discretized{\pi}_{k \stepsize}(y)}{\discretized{\pi}_t(x)}dy
  \\ =&-(t-k \stepsize)\int \nabla_y \log \discretized{\pi}_{k
    \stepsize}(y)p(\discretized{X}_t=x|\discretized{X}_{k \stepsize}=y)\frac{\discretized{\pi}_{k
      \stepsize}(y)}{\discretized{\pi}_t(x)}dy \\ =&-(t-k
  \stepsize)\mathbb{E}\left(\nabla \log \discretized{\pi}_{k \stepsize}(\discretized{X}_{k
    \stepsize})\big|\discretized{X}_t=x \right).
\end{align*}
Then, applying the Cauchy-Schwartz inequality  yields
\begin{align*}
  \mathbb{E}\Vert I_1(\discretized{X}_t)\Vert_2^2=&(t-k \stepsize)^2
  \mathbb{E}\left \Vert \mathbb{E}\left(\nabla \log \discretized{\pi}_{k
    \stepsize}(\discretized{X}_{k \stepsize})\big|\discretized{X}_t \right)\right \Vert_2^2
  \\ \leq& (t-k \stepsize)^2 \mathbb{E}\Vert \nabla \log \discretized{\pi}_{k
    \stepsize}(\discretized{X}_{k \stepsize})\Vert_2^2=(t-k \stepsize)^2 \int \discretized{\pi}_{k
    \stepsize}\Vert \nabla \log \discretized{\pi}_{k \stepsize}\Vert_2^2.
\end{align*}
This last inequality concludes the proof of Lemma~\ref{lemma-i1-regularity}.

\subsection{Proof of Lemma~\ref{lemma-i2i3-bounds}}
\label{AppProofI2I3}

Recall that this lemma provides bounds on the remaining two terms
$I_2(x)$ and $I_3(x)$ of the decomposition of $\mathbb{E}\left(
\discretized{X}_{k \stepsize}\!-\!\discretized{X}_t \big|
\discretized{X}_t\!=\!x \right)$ in \eqref{eq:decis}. We split our
proof into two parts, corresponding to the two bounds.

\paragraph{Proof of the bound~\eqref{EqnI2Bound}:}

We directly bound the Jacobian matrix using
Assumption~\ref{assume-lipschitz-drift}.
\begin{align*}
\frac{\Vert I_2(x)\Vert_2}{t-k\stepsize} & = \left \Vert \int \nabla
b(y)(y-x+(t-k \stepsize)b(y))(2 \pi(t-k \stepsize))^{-\frac{d}{2}}\exp
\left(-\frac{\Vert x-y-(t-k \stepsize) b(y)\Vert_2^2}{2 (t-k
  \stepsize)} \right)\frac{\discretized{\pi}_{k
    \stepsize}(y)}{\discretized{\pi}_t(x)}dy \right \Vert_2 \\
& \leq \smooth \int \Vert (y - x + (t - k \stepsize) b(y) \Vert_2
\frac{\discretized{\pi}_{k \stepsize}(y)}{\discretized{\pi}_t(x)} p(
\discretized{X}_t = x \big | \discretized{X}_{k \stepsize} = y ) dy \\
& = \smooth \mathbb{E} \left( \Vert \discretized{X}_{k \stepsize} +
(t-k \stepsize) b(\discretized{X}_{k \stepsize}) - \discretized{X}_t
\Vert_2 \Big| \discretized{X}_t = x \right)\\
& = \smooth \mathbb{E}\left(\Vert \int_{k \stepsize}^t dB_s \Vert_2
\Big|\discretized{X}_t=x \right).
\end{align*}
Plugging into the squared integral yields
\begin{align*}
\mathbb{E} \Vert I_2(\discretized{X}_t)\Vert_2^2 \leq (t-k
\stepsize)^2 \smooth^2 \mathbb{E} \left(\mathbb{E}\left(\Vert \int_{k
  \stepsize}^t dB_s \Vert \Big | \discretized{X}_t \right)\right)^2
\leq (t-k \stepsize)^2\smooth^2 \mathbb{E} \Vert \int_{k \stepsize}^t
dB_s \Vert_2^2 \leq 3 (t-k \stepsize)^3 \smooth^2d.
\end{align*}

\paragraph{Proof of the bound~\eqref{EqnI3Bound}:}

The size of norm of $I_3$ is determined largely by
$b(\discretized{X}_{k \stepsize})$, which can be controlled using
Assumption~\ref{assume-lipschitz-drift}:
\begin{align*}
 \mathbb{E} \Vert I_3 (\discretized{X}_t) \Vert_2^2 = (t-k
 \stepsize)^2 \mathbb{E}\Vert \mathbb{E}(b(\discretized{X}_{k
   \stepsize}) | \discretized{X}_t)\Vert_2^2 \leq \stepsize^2
 \mathbb{E}\Vert b(\discretized{X}_{k \stepsize})\Vert_2^2 \leq 2
 \stepsize^2(A_0^2+ \smooth^2 \mathbb{E}\Vert \discretized{X}_{k
   \stepsize}\Vert_2^2).
\end{align*}


\section{Regularity and Moment Estimates}
\label{sec:regu}

From the previous section, we have upper bounded the time derivative
of the KL divergence between the Langevin diffusion and its Euler
discretization, using the Fisher information of $\pi_{k \stepsize}$ and the
moment of $\discretized{X}_{k \stepsize}$. In order to show that the above estimate
is $O(\stepsize^2)$, we derive, in the next section, upper bounds on the
Fisher information and the moments which are independent of the step
size.

Bounding the discretization error essentially relies on a $L^2$
estimate of $\nabla \log \discretized{\pi}_{k \stepsize}$, and a higher order
moment of $\discretized{X}_{k \stepsize}$. In this section, we provide
non-asymptotic bounds for both quantities. The regularity estimate is
using a variant of the famous De Bruijn identity that relates Fisher
information to entropy. This stands in sharp contrast to classical PDE
regularity theory, which suffers from exponential dimension
dependencies. The moment estimate comes from a standard martingale
argument, but with explicit dependence on all the parameters.

\subsection{Proof of Proposition~\ref{prop-fisher-grid}}

We now turn to the proof of Proposition~\ref{prop-fisher-grid}, which
gives a control on the Fisher information term needed by
Proposition~\ref{prop-KL-derivative-fisher-moment}.  We first bound
the time integral of $\int \discretized \pi_t \Vert \log \discretized \pi_t
\Vert_2^2$, and then relate it to the average at the grid points. The
techniques introduced are novel and of independent interests.

The De Bruijn identity relates the time derivative of the KL
divergence with the Fisher information for the heat
kernel~\citep{Cover}. We establish an analogous result for the
Fokker-Planck equation constructed in
Lemma~\ref{lemma-Fokker-Planck-for-interploated}. This serves as a
starting point of the regularity estimate used in this paper, though
going from time integral to discrete grid points still takes effort.
\begin{lemma}\label{lemma-regularity-time-integral}
	For the time-marginal densities $\discretized{\pi}_t$ of the interpolated process $\discretized{X}_t$, we have:
	\begin{align*}
	\int_0^T \int \discretized{\pi}_t(x)\Vert \nabla \log \discretized{\pi}_t(x)\Vert_2^2dxdt \leq 4 \left(h_0+\sigma_0^{-2} \mathbb{E}\Vert \discretized{X}_T \Vert_2^2 + H (\pi_0)\right)+16 \int_0^T \left(A_0^2+\smooth^2 \mathbb{E}\Vert \discretized{X}_t \Vert_2^2 \right)dt,
\end{align*}
where $H (\cdot)$ denotes the differential entropy.
\end{lemma}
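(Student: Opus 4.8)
The plan is to run a De Bruijn-type computation on the Fokker–Planck equation~\eqref{eq:fokker_inter} for the interpolated density $\discretized{\pi}_t$, with the quantity to be controlled being the time integral of the Fisher information $J(\discretized{\pi}_t)\defn\int\discretized{\pi}_t\Vert\nabla\log\discretized{\pi}_t\Vert_2^2$. First I would pick a nonnegative "test'' functional $\Phi(t)$ that dominates the relevant energy and whose derivative, computed using~\eqref{eq:fokker_inter}, produces $-\tfrac12 J(\discretized{\pi}_t)$ up to lower-order terms. The natural choice is a combination of the negative differential entropy $-H(\discretized{\pi}_t)=\int\discretized{\pi}_t\log\discretized{\pi}_t$ and a second-moment term $\sigma_0^{-2}\Exs\Vert\discretized{X}_t\Vert_2^2$, since the Laplacian part $\tfrac12\Delta\discretized{\pi}_t$ of the PDE contributes exactly $-\tfrac12 J(\discretized{\pi}_t)$ to $\tfrac{d}{dt}\int\discretized{\pi}_t\log\discretized{\pi}_t$ (integration by parts, $\int\nabla\cdot(\tfrac12\nabla\discretized{\pi}_t)\log\discretized{\pi}_t=-\tfrac12\int\Vert\nabla\discretized{\pi}_t\Vert^2/\discretized{\pi}_t$), while the drift part $-\nabla\cdot(\discretized{\pi}_t\discretized{b}_t)$ contributes $\int\discretized{b}_t\cdot\nabla\discretized{\pi}_t=\int\discretized{\pi}_t\,\discretized{b}_t\cdot\nabla\log\discretized{\pi}_t$, which by Cauchy–Schwarz and Young's inequality is absorbed into $\tfrac14 J(\discretized{\pi}_t)$ plus $\int\discretized{\pi}_t\Vert\discretized{b}_t\Vert^2$.

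Concretely, I would compute $\tfrac{d}{dt}\big(-H(\discretized{\pi}_t)\big)=\int\partial_t\discretized{\pi}_t(1+\log\discretized{\pi}_t)$; the "$1$'' kills against $\int\partial_t\discretized{\pi}_t=0$, and the rest gives $-\tfrac12 J(\discretized{\pi}_t)+\int\discretized{\pi}_t\,\discretized{b}_t\cdot\nabla\log\discretized{\pi}_t$. Then $\discretized{b}_t\cdot\nabla\log\discretized{\pi}_t\le \tfrac14\Vert\nabla\log\discretized{\pi}_t\Vert^2+\Vert\discretized{b}_t\Vert^2$, so integrating in time from $0$ to $T$ yields
\begin{align*}
\tfrac14\int_0^T J(\discretized{\pi}_t)\,dt \le H(\discretized{\pi}_T)-H(\pi_0)+\int_0^T\Exs\Vert\discretized{b}_t(\discretized{X}_t)\Vert_2^2\,dt.
\end{align*}
Now $H(\discretized{\pi}_T)=-\Exs\log\discretized{\pi}_T(\discretized{X}_T)$ still needs an upper bound in terms of a second moment; here I would invoke the initialization Assumption~\ref{assume-smooth-initialize} together with a heat-flow / data-processing argument showing $-\log\discretized{\pi}_T(x)\le h_0+\sigma_0^{-2}\Vert x\Vert_2^2$ is \emph{not} preserved in general — so instead one uses that entropy can only increase along the added Brownian noise relative to the drift contribution, bounding $H(\discretized{\pi}_T)\le \text{const}\cdot(h_0+\sigma_0^{-2}\Exs\Vert\discretized{X}_T\Vert_2^2+\cdots)$ via the Gaussian being the maximum-entropy distribution at fixed second moment, or more carefully by tracking $\tfrac{d}{dt}\big(h_0+\sigma_0^{-2}\Exs\Vert\discretized{X}_t\Vert_2^2+H(\discretized{\pi}_t)\big)$ and observing the second-moment growth (controlled via the constant-drift structure of the interpolation) offsets the entropy growth. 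Finally $\Vert\discretized{b}_t(x)\Vert_2^2=\Vert\Exs(b(\discretized{X}_{k\stepsize})\mid\discretized{X}_t=x)\Vert_2^2\le\Exs(\Vert b(\discretized{X}_{k\stepsize})\Vert_2^2\mid\discretized{X}_t=x)$ by Jensen, and Assumption~\ref{assume-lipschitz-drift} gives $\Vert b(z)\Vert_2^2\le 2A_0^2+2\smooth^2\Vert z\Vert_2^2$; taking expectations and using that $\Exs\Vert\discretized{X}_{k\stepsize}\Vert_2^2$ is comparable to $\Exs\Vert\discretized{X}_t\Vert_2^2$ on each sub-interval produces the $16\int_0^T(A_0^2+\smooth^2\Exs\Vert\discretized{X}_t\Vert_2^2)\,dt$ term, and collecting constants gives the factor-$4$ and factor-$16$ form in the statement.

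The main obstacle I anticipate is twofold. First, the rigor of the De Bruijn computation: one must justify all the integration-by-parts steps and the interchange of $\tfrac{d}{dt}$ with $\int$, which requires decay/integrability control on $\discretized{\pi}_t$ and $\nabla\log\discretized{\pi}_t$ — exactly the kind of tail estimate the paper relegates to its "coarse estimates'' appendix (Lemma~\ref{lemma-coarse-estimate}), and near $t=k\stepsize$ the density degenerates toward a mixture of Dirac masses so the Fisher information may blow up at the grid points, meaning the argument must be run on open sub-intervals and the boundary terms handled by a limiting argument. Second, and more essentially, bounding the boundary entropy term $H(\discretized{\pi}_T)$ cleanly by $h_0+\sigma_0^{-2}\Exs\Vert\discretized{X}_T\Vert_2^2+H(\pi_0)$ with a universal constant: the honest route is to track the combined functional $\sigma_0^{-2}\Exs\Vert\discretized{X}_t\Vert_2^2 - H(\discretized{\pi}_t)$ and show its time derivative is bounded by the same $A_0^2+\smooth^2\Exs\Vert\discretized{X}_t\Vert_2^2$-type quantity plus a multiple of $J(\discretized{\pi}_t)$ that can be re-absorbed, so that the entropy increase is paid for by second-moment growth rather than left dangling. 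Getting the constants to land on exactly $4$ and $16$ is bookkeeping, but choosing the Young's-inequality split and the coefficient $\sigma_0^{-2}$ on the second moment so everything closes is the delicate part.
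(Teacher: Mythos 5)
Your overall route is the same as the paper's: differentiate the (negative) entropy along the interpolated Fokker--Planck equation of Lemma~\ref{lemma-Fokker-Planck-for-interploated}, integrate by parts so that the Laplacian produces $-\tfrac12\int\discretized{\pi}_t\Vert\nabla\log\discretized{\pi}_t\Vert_2^2$, absorb the cross term $\int\discretized{\pi}_t\,\discretized{b}_t\cdot\nabla\log\discretized{\pi}_t$ by Young's inequality, bound $\Exs\Vert\discretized{b}_t(\discretized{X}_t)\Vert_2^2$ by Jensen plus Assumption~\ref{assume-lipschitz-drift}, and integrate in time. (The paper writes the identical computation with the split $\int\partial_t\discretized{\pi}_t\log\discretized{\pi}_t=\tfrac{d}{dt}\kull{\discretized{\pi}_t}{\pi_0}+\tfrac{d}{dt}\Exs_{\discretized{\pi}_t}\log\pi_0$ made at the outset, and it likewise relies on the tail estimates of Lemma~\ref{lemma-coarse-estimate} to justify the integration by parts and the $d/dt$--$\int$ interchange, as you anticipate.)

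The one step you leave genuinely unresolved is the terminal entropy $H(\discretized{\pi}_T)$, and the missing fix is a one-liner you did not name: Gibbs' inequality relative to $\pi_0$. Since $\kull{\discretized{\pi}_T}{\pi_0}\ge 0$,
\begin{align*}
H(\discretized{\pi}_T)\;=\;-\kull{\discretized{\pi}_T}{\pi_0}+\Exs_{\discretized{\pi}_T}\left[-\log\pi_0\right]\;\le\;\Exs\left[-\log\pi_0(\discretized{X}_T)\right]\;\le\;h_0+\sigma_0^{-2}\,\Exs\Vert\discretized{X}_T\Vert_2^2,
\end{align*}
the last inequality by Assumption~\ref{assume-smooth-initialize}; no heat-flow, maximum-entropy, or combined-functional argument is needed, and this is exactly how the paper closes the estimate (it carries the KL-to-$\pi_0$ term and drops it by nonnegativity at time $T$). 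Your Gaussian-max-entropy substitute would only recover the stated form up to an extra additive $O(d)$ term: it gives $\tfrac{d}{2}\log\bigl(2\pi e\,\Exs\Vert\discretized{X}_T\Vert_2^2/d\bigr)$, which has to be compared to $h_0$ via the normalization consequence $h_0\ge\tfrac{d}{2}\log(\pi\sigma_0^2)$ of Assumption~\ref{assume-smooth-initialize}, and the ``track a combined functional'' alternative is left vague. Two harmless bookkeeping remarks: your Jensen step naturally yields the second moment at the grid point $k\stepsize$ rather than at $t$, which is the same level of precision as the paper and suffices since the moments are ultimately bounded uniformly along the path; and your accounting produces $-H(\pi_0)$ on the right-hand side, which is also what the paper's own computation yields.
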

See Section~\ref{sec-appendix-regularity-time-integral} for the
proof. 


Lemma~\ref{lemma-regularity-time-integral} gives control on the
average of the second order regularity estimate. However, we want
bound for this quantity evaluated at the grid points $\{k \stepsize
\}_{k=1}^{+\infty}$. To relate back to grid points, we use to
discrete-time arguments, by splitting the transformation from
$\discretized{\pi}_{t}$ to $\discretized{\pi}_{k \stepsize}$ into two
parts, and mimic the forward Euler algorithm. The following lemma
gives control on the relative difference between the integral at time
$k \stepsize$ and $t \in[(k-1)\stepsize,k \stepsize]$. The proof is
postponed to
Section~\ref{sect:proof-regularity-relative-difference}.
\begin{lemma}
  \label{lemma-regularity-relative-difference}
For any $\stepsize \in \big(0, \frac{1}{2\smooth} \big)$ and $t_0 \in
[(k-1)\stepsize,k \stepsize]$, we have:
  \begin{align}
    \label{eq:regurelat}
    \int \discretized{\pi}_{k \stepsize} \Vert \nabla \log
    \discretized{\pi}_{k \stepsize}\Vert_2^2 \leq 8 \int \discretized{\pi}_{t_0} \Vert \nabla
    \log \discretized{\pi}_{t_0} \Vert_2^2 + 32 \stepsize^2 d^2 \hessianlip^2.
  \end{align}
\end{lemma}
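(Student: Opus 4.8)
The plan is to mimic a single step of forward Euler. Conditioning on $\discretized{\mathcal{F}}_{(k-1)\stepsize}$, the interpolated process on $[(k-1)\stepsize,k\stepsize]$ is a Brownian motion with constant drift $b(\discretized{X}_{(k-1)\stepsize})$, so for every $h\in[0,\stepsize]$ the law of $\discretized{X}_{(k-1)\stepsize+h}$ is the pushforward of $\discretized{\pi}_{(k-1)\stepsize}$ under the map $T_h(x):=x+h\,b(x)$, convolved with the $\NORMAL(0,hI_d)$ density $\varphi_h$:
\[
  \discretized{\pi}_{(k-1)\stepsize+h}=(T_h)_\#\discretized{\pi}_{(k-1)\stepsize}*\varphi_h .
\]
Writing $s_0:=t_0-(k-1)\stepsize$ and $r:=k\stepsize-t_0=\stepsize-s_0$, this gives $\discretized{\pi}_{t_0}=(T_{s_0})_\#\discretized{\pi}_{(k-1)\stepsize}*\varphi_{s_0}$ and $\discretized{\pi}_{k\stepsize}=(T_\stepsize)_\#\discretized{\pi}_{(k-1)\stepsize}*\varphi_\stepsize$. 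Abbreviate $J(\rho):=\int\rho\,\vecnorm{\nabla\log\rho}{2}^2$. I combine two facts: (i) Gaussian smoothing cannot increase Fisher information, $J(\mu*\varphi_h)\leq J(\mu)$ (Stam's inequality / monotonicity of $J$ along the heat flow); and (ii) a change-of-variables estimate for $J$ under the near-identity map $T_h$.

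For (ii): since $\stepsize\smooth<\tfrac12$, Assumption~\ref{assume-lipschitz-drift} gives $\opnorm{h\nabla b(x)}<\tfrac12$, so $T_h$ is a diffeomorphism with $\opnorm{(\nabla T_h(x))^{-1}}\leq(1-\stepsize\smooth)^{-1}<2$. For $Y=T_\stepsize(X)$ one has $\nabla\log p_Y(T_\stepsize(x))=(\nabla T_\stepsize(x))^{-\top}\nabla\log p_X(x)+g(x)$ with $g(x):=\big(\nabla_y\log\abss{\det\nabla T_\stepsize^{-1}}\big)(T_\stepsize(x))$; a change of variables together with $\vecnorm{a+b}{2}^2\leq2\vecnorm{a}{2}^2+2\vecnorm{b}{2}^2$ yields
\[
  J\big((T_\stepsize)_\#p_X\big)\leq 2\Big(\sup_x\opnorm{(\nabla T_\stepsize(x))^{-1}}^2\Big)J(p_X)+2\sup_x\vecnorm{g(x)}{2}^2\leq 8\,J(p_X)+2\sup_x\vecnorm{g(x)}{2}^2 .
\]
By Jacobi's formula $g(x)=-(\nabla T_\stepsize(x))^{-\top}\big[\stepsize\,\trace\big((\nabla T_\stepsize(x))^{-1}\partial_{x_j}\nabla b(x)\big)\big]_{j=1}^d$, and Assumption~\ref{assume-smooth-drift} controls $\opnorm{\partial_{x_j}\nabla b(x)}\leq\hessianlip$; tracking the dimension dependence carefully gives $\sup_x\vecnorm{g(x)}{2}^2\leq 16\stepsize^2\hessianlip^2 d^2$, hence the additive term $32\stepsize^2\hessianlip^2 d^2$. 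The same estimate applies to every $T_h$ with $h\leq\stepsize$ and to the composition $\Xi:=T_\stepsize\circ T_{s_0}^{-1}$, for which $\nabla\Xi-I=r\,\nabla b\,(I+s_0\nabla b)^{-1}$ (evaluated along $T_{s_0}^{-1}$), so $\opnorm{\nabla\Xi-I}\leq 2r\smooth\leq2\stepsize\smooth<1$ and $\opnorm{(\nabla\Xi)^{-1}}<2$.

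To assemble, use $\varphi_\stepsize=\varphi_{s_0}*\varphi_r$ and fact (i) to peel the excess smoothing off $\discretized{\pi}_{k\stepsize}$:
\[
  J(\discretized{\pi}_{k\stepsize})=J\big((T_\stepsize)_\#\discretized{\pi}_{(k-1)\stepsize}*\varphi_{s_0}*\varphi_r\big)\leq J\big((T_\stepsize)_\#\discretized{\pi}_{(k-1)\stepsize}*\varphi_{s_0}\big).
\]
Since $(T_\stepsize)_\#\discretized{\pi}_{(k-1)\stepsize}=\Xi_\#\nu$ with $\nu:=(T_{s_0})_\#\discretized{\pi}_{(k-1)\stepsize}$ and $\nu*\varphi_{s_0}=\discretized{\pi}_{t_0}$, it remains to compare $\Xi_\#\nu*\varphi_{s_0}$ with $\Xi_\#(\nu*\varphi_{s_0})=\Xi_\#\discretized{\pi}_{t_0}$: the change-of-variables bound applied to $\Xi$ gives $J(\Xi_\#\discretized{\pi}_{t_0})\leq 8\,J(\discretized{\pi}_{t_0})+32\stepsize^2\hessianlip^2 d^2$, and absorbing the discrepancy between "apply $\Xi$ then convolve with $\varphi_{s_0}$" and "convolve then apply $\Xi$" — which vanishes when $\Xi=\mathrm{id}$ — into the same constants yields $J(\discretized{\pi}_{k\stepsize})\leq 8\,J(\discretized{\pi}_{t_0})+32\stepsize^2\hessianlip^2 d^2$.

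The hard part is exactly this interior case $s_0>0$: for $t_0=(k-1)\stepsize$ one simply has $\discretized{\pi}_{k\stepsize}=(T_\stepsize)_\#\discretized{\pi}_{t_0}*\varphi_\stepsize$ and the bound is immediate from (i)--(ii), but for $s_0>0$ the near-identity map $\Xi$ does not commute with the $\varphi_{s_0}$-convolution, and one must verify that this non-commutation, together with the Taylor remainder of $\Xi$ against the Gaussian displacement, contributes only lower-order corrections; the dimension bookkeeping needed to bound $\vecnorm{g}{2}$ by $O(\stepsize\hessianlip d)$ rather than the crude $O(\stepsize\hessianlip d^{3/2})$ is the other delicate point. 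Finally, once the inequality holds for every $t_0\in[(k-1)\stepsize,k\stepsize]$, averaging it over $t_0$ in the step gives $J(\discretized{\pi}_{k\stepsize})\leq\frac{8}{\stepsize}\int_{(k-1)\stepsize}^{k\stepsize}J(\discretized{\pi}_t)\,dt+32\stepsize^2\hessianlip^2 d^2$, which combined with Lemma~\ref{lemma-regularity-time-integral} produces Proposition~\ref{prop-fisher-grid}.
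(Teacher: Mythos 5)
Your setup is fine up to the assembly step, but the assembly step contains a genuine gap. You factor the evolution through the grid point $(k-1)\stepsize$, writing $\discretized{\pi}_{t_0}=(T_{s_0})_\#\discretized{\pi}_{(k-1)\stepsize}*\varphi_{s_0}$ and $\discretized{\pi}_{k\stepsize}=(T_\stepsize)_\#\discretized{\pi}_{(k-1)\stepsize}*\varphi_\stepsize$, and after peeling off $\varphi_r$ you must compare $J(\Xi_\#\nu*\varphi_{s_0})$ with $J\big(\Xi_\#(\nu*\varphi_{s_0})\big)=J(\Xi_\#\discretized{\pi}_{t_0})$, where $\Xi=T_\stepsize\circ T_{s_0}^{-1}$. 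These are different densities (pushforward by a nonlinear map does not commute with convolution), and your argument simply asserts that the discrepancy ``contributes only lower-order corrections'' and can be ``absorbed into the same constants.'' That assertion is exactly the hard content of the lemma and is not justified: while the two random variables $\Xi(V)+G$ and $\Xi(V+G)$ differ only by $r\,[\,b(T_{s_0}^{-1}(V))-b(T_{s_0}^{-1}(V+G))\,]=O(\stepsize^{3/2})$ pathwise, Fisher information $J(\cdot)$ is not continuous with respect to such couplings (closeness in Wasserstein or total variation gives no control of $J$), so there is no generic perturbation bound that converts this closeness into $J(\Xi_\#\nu*\varphi_{s_0})\leq C\,J(\Xi_\#(\nu*\varphi_{s_0}))+O(\stepsize^2 d^2\hessianlip^2)$. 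As written, the chain of inequalities therefore does not close for $s_0>0$, which is the only nontrivial case.

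The paper avoids this commutation problem entirely by a different one-step representation: Lemma~\ref{lemma-mimic-forward-euler} shows that $\discretized{\pi}_{k\stepsize}$ is \emph{exactly} the law of $\discretized{X}_{t_0}+(k\stepsize-t_0)\,\discretized{b}_{t_0}(\discretized{X}_{t_0})+\sqrt{k\stepsize-t_0}\,\mathcal{N}(0,I)$, i.e.\ a single Euler-type step started from time $t_0$ whose drift is the backward conditional expectation $\discretized{b}_{t_0}(x)=\mathbb{E}\big(b(\discretized{X}_{(k-1)\stepsize})\,\big|\,\discretized{X}_{t_0}=x\big)$; this is proved via the tower identity $\mathbb{E}\big(\discretized{b}_{t_0}(\discretized{X}_{t_0})\,\big|\,\discretized{X}_s=x\big)=\discretized{b}_s(x)$, the Fokker--Planck equation of Lemma~\ref{lemma-Fokker-Planck-for-interploated}, and uniqueness of the parabolic solution. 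With that exact representation, the Gaussian convolution sits outside the (single) nonlinear map, so the change-of-variables estimate (Lemma~\ref{lemma-fisher-one-step-blow-up}) plus monotonicity of Fisher information under the heat flow (Lemma~\ref{lemma-non-increasing-fisher-heat}) give the bound with no interchange needed. To repair your argument you would essentially have to reprove this lemma (or some equivalent quantitative comparison of scores), since replacing $b(\discretized{X}_{(k-1)\stepsize})$ by a function of $\discretized{X}_{t_0}$ without changing the time-$k\stepsize$ marginal is precisely what the conditional-expectation drift accomplishes. A secondary issue: your claim $\opnorm{(\nabla\Xi)^{-1}}<2$ is not correct under the stated assumption; one only gets $\opnorm{(\nabla\Xi)^{-1}}\leq\opnorm{\nabla T_{s_0}}\,\opnorm{(\nabla T_\stepsize)^{-1}}\leq(1+s_0\smooth)/(1-\stepsize\smooth)\leq 3$ when $\stepsize<\frac{1}{2\smooth}$, so even apart from the main gap your route would not reproduce the constants $8$ and $32$ as stated.
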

Taking averages over $t_0 \!\in\![(k\!-\!1)\stepsize,k \stepsize]$ in
equation~\eqref{eq:regurelat} and then summing over $k$ completes the
proof of Proposition~\ref{prop-fisher-grid}.


\subsubsection{Proof of Lemma~\ref{lemma-regularity-time-integral}}
\label{sec-appendix-regularity-time-integral}

Our general strategy is to relate the Laplacian operator in the
semigroup generator, with the one that naturally comes from applying
integration by parts to the Fisher information. Note that in the
second step we use the divergence theorem, which is justified by
Remark~\ref{remark-integration-by-parts} in
Appendix~\ref{sect:appendix-coarse-estimates}.
\begin{align}
\label{eq:intpart}
\int \discretized{\pi}\Vert \nabla \log \discretized{\pi}\Vert_2^2 =
\int \langle \nabla \discretized{\pi},\nabla \log
\discretized{\pi}\rangle = -\int \log \discretized{\pi} \Delta
\discretized{\pi}.
\end{align}
On the other hand, the semigroup generator for the time-inhomogeneous
process is given by
\begin{align}
\label{eq:semi}
\discretized{\mathcal{L}}_t \discretized{\pi} = - \nabla \cdot
\left(\discretized{\pi}\discretized{b}\right)+\frac{1}{2}\Delta
\discretized{\pi}.
\end{align}
Putting together equations~\eqref{eq:intpart} and~\eqref{eq:semi}
yields
\begin{align*}
  \int \discretized{\pi}\Vert \nabla \log \discretized{\pi}\Vert_2^2 &
  = -2 \int (\discretized{\mathcal{L}}_t \discretized{\pi})\log
  \discretized{\pi} + 2 \int \nabla \cdot
  \left(\discretized{\pi}\discretized{b}\right)\log \discretized{\pi}
  \\
& = -2 \int \frac{\partial \discretized{\pi}}{\partial t}\log
  \discretized{\pi}+2 \int \nabla \cdot
  \left(\discretized{\pi}\discretized{b}\right)\log
  \discretized{\pi}\\ =&-2 \frac{d}{dt}\left(\int
  \discretized{\pi}\log \frac{\discretized{\pi}}{\pi_0}\right)-2
  \frac{d}{dt} \mathbb{E}_{\discretized{\pi}}\log \pi_0-2 \int
  \discretized{\pi}\left(\nabla \log \discretized{\pi}\cdot
  \discretized{b}\right) \\
  & \leq -2 \frac{d}{dt}\left(\int \discretized{\pi}\log
  \frac{\discretized{\pi}}{\pi_0}\right)-2
  \frac{d}{dt}\mathbb{E}_{\discretized{\pi}}\log \pi_0 + 4 \int
  \discretized{\pi}\Vert \discretized{b} \Vert_2^2 + \frac{1}{2} \int
  \discretized{\pi} \Vert \nabla \log \discretized{\pi} \Vert_2^2.
\end{align*}
This directly yields to:
\begin{align}
\label{eq:tempo}
\int \discretized{\pi} \Vert \nabla \log \discretized{\pi} \Vert_2^2 \leq -4
\frac{d}{dt} \left(\int \discretized{\pi} \log
\frac{\discretized{\pi}}{\pi_0}\right)-4 \frac{d}{dt} \mathbb{E}_{\discretized{\pi}}\log \pi_0 + 8
\int \discretized{\pi}\Vert \discretized{b}\Vert_2^2.
\end{align}
Under Assumption~\ref{assume-smooth-initialize}, we have
$-\mathbb{E}_{\discretized{\pi}}\log \pi_0 \leq
h_0+\sigma_0^{-2}\mathbb{E}\Vert \discretized{X}_t \Vert_2^2$. On the other
hand, $\int \discretized{\pi}\log \frac{\discretized{\pi}}{\pi_0}=\kull{\discretized{\pi}}{\pi_0} \geq 0$. Hence, plugging into \eqref{eq:tempo} and integrating
we obtain the desired result:
\begin{align*}
  \int_0^T \int \discretized{\pi}\Vert \nabla \log \discretized{\pi}\Vert_2^2dt \leq 4
  \left(h_0+\sigma_0^{-2} \mathbb{E}\Vert \discretized{X}_T
  \Vert_2^2 + H (\pi_0)\right)+16 \int_0^T \left(A_0^2 + \smooth^2 \mathbb{E}\Vert
  \discretized{X}_t \Vert_2^2 \right) dt.
\end{align*}

\subsubsection{Proof of Lemma~\ref{lemma-regularity-relative-difference}}\label{sect:proof-regularity-relative-difference}
The proof involves a sequence of auxiliary lemmas. We first show that
the transition from $\discretized{\pi}_{t}$ to $\discretized{\pi}_{k \stepsize}$ can be
viewed as a discrete-time update. 
\begin{lemma}
 \label{lemma-mimic-forward-euler}
 For a given $t_0 \in [(k-1)\stepsize,k \stepsize]$, define the random
 variable
 \begin{align} \discretized{Y}_{k \stepsize} & \defn \discretized{X}_{t_0}+(k
 \stepsize-t_0)\discretized{b}_{t_0}(\discretized{X}_{t_0})+(k
 \stepsize-t_0)^{\frac{1}{2}}\mathcal{N}(0,I).
 \end{align}
 Then we have $\discretized{Y}_{k \stepsize} \stackrel{d}{=} \discretized{X}_{k \stepsize}$.
\end{lemma}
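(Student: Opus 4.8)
The plan is to establish the distributional identity conditionally on the $\sigma$-field $\discretized{\mathcal{F}}_{(k-1)\stepsize}$ and then to remove the conditioning by Bayes' rule, using that on the block $[(k-1)\stepsize,k\stepsize]$ the interpolated process is a Brownian motion with constant drift.

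First I would record an additive representation. From the definition~\eqref{eq:def-interpolation}, for $t\in[(k-1)\stepsize,k\stepsize]$ we have $\discretized{X}_t=\discretized{X}_{(k-1)\stepsize}+(t-(k-1)\stepsize)\,b(\discretized{X}_{(k-1)\stepsize})+(\discretized{B}_t-\discretized{B}_{(k-1)\stepsize})$; evaluating this at $t=k\stepsize$ and at $t=t_0$ and subtracting gives
\[
\discretized{X}_{k\stepsize}=\discretized{X}_{t_0}+(k\stepsize-t_0)\,b(\discretized{X}_{(k-1)\stepsize})+(\discretized{B}_{k\stepsize}-\discretized{B}_{t_0}),
\]
where the increment $\discretized{B}_{k\stepsize}-\discretized{B}_{t_0}\sim\mathcal{N}(0,(k\stepsize-t_0)I)$ is independent of $\discretized{\mathcal{F}}_{t_0}$, hence of the pair $(\discretized{X}_{(k-1)\stepsize},\discretized{X}_{t_0})$. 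The corresponding representation of $\discretized{Y}_{k\stepsize}$ is simply its definition, with a fresh standard Gaussian in place of the increment.

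Since $\discretized{Y}_{k\stepsize}$ and $\discretized{X}_{k\stepsize}$ are both built on top of the same random variable $\discretized{X}_{t_0}$, whose law is common to both, it suffices to match their conditional laws given $\discretized{X}_{t_0}=x$; concretely, I would show that conditionally on $\discretized{X}_{t_0}=x$ the variable $\discretized{X}_{k\stepsize}$ is Gaussian with mean $x+(k\stepsize-t_0)\discretized{b}_{t_0}(x)$ and covariance $(k\stepsize-t_0)I$. By the additive representation, conditioning on $\discretized{X}_{t_0}=x$ makes $\discretized{X}_{k\stepsize}$ equal in law to $x+(k\stepsize-t_0)\,b(\discretized{X}_{(k-1)\stepsize})+(\discretized{B}_{k\stepsize}-\discretized{B}_{t_0})$, where $\discretized{X}_{(k-1)\stepsize}$ now carries its conditional density $p(z\mid\discretized{X}_{t_0}=x)$ and the Gaussian increment stays independent of it. I would then expand $p(z\mid\discretized{X}_{t_0}=x)$ by Bayes' rule using the Gaussian one-step transition density $p(\discretized{X}_{t_0}=x\mid\discretized{X}_{(k-1)\stepsize}=z)$ — exactly the computation carried out just before Lemma~\ref{lemma-i1-regularity}, with integrability of all the relevant integrands (products of a Gaussian and a density) guaranteed by the rapid tail decay of $\discretized{\pi}_{(k-1)\stepsize}$ recorded in Appendix~\ref{sect:appendix-coarse-estimates} — and reorganize the resulting double Gaussian integral so that $\mathbb{E}\bigl(b(\discretized{X}_{(k-1)\stepsize})\mid\discretized{X}_{t_0}=x\bigr)=\discretized{b}_{t_0}(x)$ emerges as the centre of the single surviving Gaussian factor. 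Comparing with the definition of $\discretized{Y}_{k\stepsize}$ and integrating over the law of $\discretized{X}_{t_0}$ then yields $\discretized{Y}_{k\stepsize}\stackrel{d}{=}\discretized{X}_{k\stepsize}$.

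The delicate point, and the place where the Gaussian structure of a single Euler step is genuinely used, is this last reorganization: one must check that after conditioning on $\discretized{X}_{t_0}$ the residual fluctuation of $b(\discretized{X}_{(k-1)\stepsize})$ about its conditional mean $\discretized{b}_{t_0}(\discretized{X}_{t_0})$ is absorbed into a Gaussian of covariance exactly $(k\stepsize-t_0)I$; the remaining ingredients — differentiation under the integral sign, strict positivity of $\discretized{\pi}_{t_0}$, and the boundary/divergence terms — are routine given the coarse estimates of Appendix~\ref{sect:appendix-coarse-estimates}. This representation is precisely what lets one treat the passage from $\discretized{\pi}_{t_0}$ to $\discretized{\pi}_{k\stepsize}$ as one forward-Euler update — a Lipschitz drift map $x\mapsto x+(k\stepsize-t_0)\discretized{b}_{t_0}(x)$ followed by a Gaussian convolution — which the subsequent lemmas exploit to bound the Fisher information at the grid point $k\stepsize$ by that at $t_0$ in Lemma~\ref{lemma-regularity-relative-difference}.
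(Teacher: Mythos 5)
Your reduction to conditional laws cannot be carried out: the key claim --- that conditionally on $\discretized{X}_{t_0}=x$ the variable $\discretized{X}_{k\stepsize}$ is Gaussian with mean $x+(k\stepsize-t_0)\discretized{b}_{t_0}(x)$ and covariance $(k\stepsize-t_0)I$ --- is false. From your own additive representation, conditionally on $\discretized{X}_{t_0}=x$ we have $\discretized{X}_{k\stepsize}=x+(k\stepsize-t_0)\,b(\discretized{X}_{(k-1)\stepsize})+W$ with $W\sim\mathcal{N}(0,(k\stepsize-t_0)I)$ independent of the pair $(\discretized{X}_{(k-1)\stepsize},\discretized{X}_{t_0})$, and for $t_0>(k-1)\stepsize$ the conditional law of $\discretized{X}_{(k-1)\stepsize}$ given $\discretized{X}_{t_0}=x$ is nondegenerate. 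Hence the conditional law of $\discretized{X}_{k\stepsize}$ is a mixture of the Gaussians $\mathcal{N}\bigl(x+(k\stepsize-t_0)b(z),(k\stepsize-t_0)I\bigr)$ over $z$ drawn from that conditional law: its mean is indeed $x+(k\stepsize-t_0)\discretized{b}_{t_0}(x)$, but its covariance is $(k\stepsize-t_0)I+(k\stepsize-t_0)^2\,\mathrm{Cov}\bigl(b(\discretized{X}_{(k-1)\stepsize})\mid\discretized{X}_{t_0}=x\bigr)$, and the law is in general not Gaussian. Taking the backward conditional expectation removes the fluctuation of $b(\discretized{X}_{(k-1)\stepsize})$ from the mean only, not from the spread, so the ``delicate point'' you defer is exactly the step that fails. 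A one-dimensional check makes this concrete: with $b(x)=-x$, $\pi_0=\mathcal{N}(0,1)$ and $k=1$, all laws involved are Gaussian, and the variance of $\discretized{X}_{t_0}+(\stepsize-t_0)\discretized{b}_{t_0}(\discretized{X}_{t_0})+\sqrt{\stepsize-t_0}\,\xi$ falls short of that of $\discretized{X}_{\stepsize}$ by $(\stepsize-t_0)^2t_0/\bigl((1-t_0)^2+t_0\bigr)$; since this discrepancy even persists after integrating over $x$, no reorganization of your conditional computation can produce the asserted identity, and matching conditional laws given $\discretized{X}_{t_0}$ is in any case a strictly stronger statement than the lemma.

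The paper's own proof takes a different route precisely to avoid any conditional claim: it introduces the auxiliary process $\discretized{X}'_s=\discretized{X}_{t_0}+(s-t_0)\discretized{b}_{t_0}(\discretized{X}_{t_0})+\int_{t_0}^s dB_u$ on $[t_0,k\stepsize]$, applies Lemma~\ref{lemma-Fokker-Planck-for-interploated} to obtain a Fokker--Planck equation for its one-time marginals, identifies the resulting drift with $\discretized{b}_s$ via the tower property (using that $\discretized{X}'_s$ and $\discretized{X}_{(k-1)\stepsize}$ are conditionally independent given $\discretized{X}_{t_0}$), and concludes by uniqueness of solutions of the parabolic equation started from $\discretized{\pi}_{t_0}$; whatever cancellation occurs is thus a statement about unconditional marginal densities after integrating the mixture against $\discretized{\pi}_{t_0}$, not a pathwise or conditional one. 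You should also treat your linear-Gaussian computation as a genuine warning about the crux of that argument: the drift supplied by Lemma~\ref{lemma-Fokker-Planck-for-interploated} for the auxiliary process is a backward conditional expectation under the law of $\discretized{X}'_s$, whereas $\discretized{b}_s$ is defined through the law of $\discretized{X}_s$, and identifying these two objects --- rather than the routine tower-property manipulation --- is where all the difficulty (and, in light of the example above, the need for care about exactness versus higher-order error) resides.
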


Using Lemma~\ref{lemma-mimic-forward-euler}, we can see $\discretized{\pi}_{k
  \stepsize}$ as the consequence of a nonlinear transform and heat kernel
performed on $\discretized{\pi}_{t_0}$. For the first part, we can directly
bound it, as long as $\stepsize$ is not too large:
\begin{lemma}\label{lemma-fisher-one-step-blow-up}
Let $\phi: \real^d \rightarrow \real^d, \phi(x)=x+(t-k
\stepsize_0)\discretized{b}_{t_0}(x)$. For $\stepsize<\frac{1}{2\smooth}$, Let
$Z=\discretized{X}_{t_0}+(k \stepsize-t_0)\discretized{b}_{t_0}(\discretized{X}_{t_0})$, and let
$p(\cdot)$ be the density of $Z$. We have:
 \begin{align*}
   \int p(z)\Vert \nabla_z \log p(z)\Vert_2^2dz \leq 8 \int
   \discretized{\pi}_{t_0}(x)\Vert \nabla_x \log
   \discretized{\pi}_{t_0}(x)\Vert_2^2dx+32 \stepsize^2d^2\hessianlip^2.
 \end{align*}
\end{lemma}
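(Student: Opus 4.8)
The plan is to realize $Z=\phi(\discretized{X}_{t_0})$ as the pushforward of $\discretized{X}_{t_0}$ under the map $\phi(x)=x+s\,\discretized{b}_{t_0}(x)$ with $s:=k\stepsize-t_0\in(0,\stepsize]$, and then track how the Fisher information transforms under this change of variables. The argument needs two regularity facts about the backward drift: that $\discretized{b}_{t_0}$ inherits the Lipschitz constant of $b$, i.e. $\opnorm{\nabla \discretized{b}_{t_0}(x)}\le \smooth$, and that it inherits the Hessian-Lipschitz constant, i.e. $\opnorm{\nabla \discretized{b}_{t_0}(x)-\nabla \discretized{b}_{t_0}(y)}\le \hessianlip\Vert x-y\Vert_2$. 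Granting the first of these, for $\stepsize<\frac{1}{2\smooth}$ we get $\opnorm{s\nabla \discretized{b}_{t_0}(x)}\le \stepsize\smooth<\tfrac12$, so $\nabla\phi(x)=I+s\nabla \discretized{b}_{t_0}(x)$ is invertible everywhere with $\opnorm{(\nabla\phi(x))^{-1}}\le 2$; since $\phi$ is then $(1-\stepsize\smooth)$-bi-Lipschitz and proper, it is a global $C^1$ diffeomorphism of $\real^d$.

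First I would carry out the change of variables. We have $p(z)=\discretized{\pi}_{t_0}(\phi^{-1}(z))\big/\lvert\det\nabla\phi(\phi^{-1}(z))\rvert$, so writing $x=\phi^{-1}(z)$ the score of $Z$ is $\nabla_z\log p(z)=(\nabla\phi(x))^{-\top}\big(\nabla\log\discretized{\pi}_{t_0}(x)-\nabla_x\log\lvert\det\nabla\phi(x)\rvert\big)$. Substituting $z=\phi(x)$ and using $p(\phi(x))\,\lvert\det\nabla\phi(x)\rvert=\discretized{\pi}_{t_0}(x)$ gives $\int p\,\Vert\nabla\log p\Vert_2^2 = \int \discretized{\pi}_{t_0}(x)\,\big\Vert(\nabla\phi(x))^{-\top}\big(\nabla\log\discretized{\pi}_{t_0}(x)-\nabla_x\log\lvert\det\nabla\phi(x)\rvert\big)\big\Vert_2^2\,dx$. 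Applying $\opnorm{(\nabla\phi)^{-\top}}\le 2$ and $\Vert u-v\Vert_2^2\le 2\Vert u\Vert_2^2+2\Vert v\Vert_2^2$ yields $\int p\,\Vert\nabla\log p\Vert_2^2 \le 8\int \discretized{\pi}_{t_0}\Vert\nabla\log\discretized{\pi}_{t_0}\Vert_2^2 + 8\int \discretized{\pi}_{t_0}(x)\,\Vert\nabla_x\log\lvert\det\nabla\phi(x)\rvert\Vert_2^2\,dx$, which already produces the first term of the claimed bound (the integrability needed to justify the change of variables and the integration-by-parts style manipulations follows from the coarse tail estimates, as elsewhere in the paper).

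It then remains to bound the log-determinant term by $4\stepsize^2 d^2\hessianlip^2$ pointwise. Writing $M:=\nabla \discretized{b}_{t_0}$, Jacobi's formula gives $\partial_\ell\log\lvert\det(I+sM)\rvert = s\,\trace\big((I+sM)^{-1}\partial_\ell M\big) = s\,\langle (I+sM)^{-\top},\partial_\ell M\rangle_{\mathrm F}$, hence $\Vert\nabla_x\log\lvert\det\nabla\phi(x)\rvert\Vert_2^2 = s^2\sum_{\ell=1}^d\langle (I+sM)^{-\top},\partial_\ell M\rangle_{\mathrm F}^2$. Viewing $N\mapsto(\langle N,\partial_\ell M\rangle_{\mathrm F})_{\ell=1}^d$ as a linear map $\real^{d\times d}\to\real^d$, its operator norm equals $\sup_{\Vert u\Vert_2=1}\Vert\partial_u\nabla \discretized{b}_{t_0}(x)\Vert_{\mathrm F}\le \sup_{\Vert u\Vert_2=1}\sqrt{d}\,\opnorm{\partial_u\nabla \discretized{b}_{t_0}(x)}\le \sqrt{d}\,\hessianlip$ by the inherited Hessian-Lipschitz bound; combining with $\Vert(I+sM)^{-\top}\Vert_{\mathrm F}\le\sqrt d\,\opnorm{(I+sM)^{-1}}\le 2\sqrt d$ gives $\sum_\ell\langle\cdot\rangle_{\mathrm F}^2\le d\hessianlip^2\cdot 4d=4d^2\hessianlip^2$, so that $\Vert\nabla_x\log\lvert\det\nabla\phi(x)\rvert\Vert_2^2\le 4s^2 d^2\hessianlip^2\le 4\stepsize^2 d^2\hessianlip^2$ using $s\le\stepsize$. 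Plugging this into the displayed inequality yields exactly $\int p\,\Vert\nabla\log p\Vert_2^2\le 8\int \discretized{\pi}_{t_0}\Vert\nabla\log\discretized{\pi}_{t_0}\Vert_2^2 + 32\stepsize^2 d^2\hessianlip^2$.

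The main obstacle is establishing the two regularity inputs for $\discretized{b}_{t_0}(x)=\mathbb{E}\big(b(\discretized{X}_{(k-1)\stepsize})\mid \discretized{X}_{t_0}=x\big)$. Because $\discretized{b}_{t_0}$ is a conditional expectation of $b$ through the Gaussian channel $\discretized{X}_{t_0}=\discretized{X}_{(k-1)\stepsize}+(t_0-(k-1)\stepsize)b(\discretized{X}_{(k-1)\stepsize})+\sqrt{t_0-(k-1)\stepsize}\,\xi$, it should morally be at least as smooth as $b$; the clean route is to differentiate the Bayes formula, obtaining $\nabla \discretized{b}_{t_0}(x)=\frac{1}{t_0-(k-1)\stepsize}\,\cov\big(b(\discretized{X}_{(k-1)\stepsize}),\,\discretized{X}_{(k-1)\stepsize}+(t_0-(k-1)\stepsize)b(\discretized{X}_{(k-1)\stepsize})\,\big|\,\discretized{X}_{t_0}=x\big)$, and then controlling this conditional covariance via the Lipschitz property of $b$ together with quantitative smoothness of the density $\discretized{\pi}_{(k-1)\stepsize}$ — which is smooth precisely because it is a Gaussian convolution; this is where the coarse density estimates and the step-size restriction $\stepsize<\frac{1}{2\smooth}$ enter, and a parallel second-derivative computation gives the Hessian-Lipschitz bound. (A simple but important secondary point, already built into the argument above, is the dimension bookkeeping: the $d^2$ rather than $d^3$ scaling of the log-determinant term comes from pairing the operator-norm/tensor form of Assumption~\ref{assume-smooth-drift} for $\discretized{b}_{t_0}$ with a Frobenius bound on $(\nabla\phi)^{-1}$, rather than using a crude coordinatewise bound.)
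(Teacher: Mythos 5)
Your argument is correct and is essentially the paper's own proof: the same change of variables through $\phi$ with $p(z)=\discretized{\pi}_{t_0}(\phi^{-1}(z))/\det\nabla\phi(\phi^{-1}(z))$, the same bound $\opnorm{(\nabla\phi(x))^{-1}}\le 2$ coming from $\stepsize<\frac{1}{2\smooth}$, and the same split into the Fisher-information term (factor $8$) plus the gradient-of-log-determinant term, controlled via Jacobi's formula and second-order smoothness of the drift to give $32\stepsize^2 d^2\hessianlip^2$. The only point of divergence is that you explicitly flag, as an unproven input, that the backward drift $\discretized{b}_{t_0}(x)=\mathbb{E}\bigl(b(\discretized{X}_{(k-1)\stepsize})\mid\discretized{X}_{t_0}=x\bigr)$ must inherit the constants $\smooth$ and $\hessianlip$; the paper's proof uses precisely these bounds (it asserts $\tfrac12 I\preceq\nabla\phi\preceq\tfrac32 I$ and invokes $\hessianlip$ for $\nabla\cdot\nabla\discretized{b}_{t_0}$) without further justification, so on this regularity question your write-up is, if anything, more explicit than the paper's, and your sketched covariance-representation route is a reasonable way one would try to supply it.
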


The second term is harmless because it is of order $O(\stepsize^2)$,
leading to $O(\stepsize^4)$ in the final bound, and the first term blows up
the regularity estimate by factor 8.

In our next step, we are going to relate the $L^2$ regularity integral
of $p$ to that of $\discretized{\pi}_{k \stepsize}$, and therefore finish
establishing the connection between the integral at grid points and at
arbitrary time point.

First of all, we note the fact that the transition from $p$ to
$\discretized{\pi}_{k \stepsize}'$ follows a heat equation in
$\real^d$. Concretely, consider the equation:
\begin{align*}
    \frac{\partial u_s}{\partial s}(x)=\Delta u_s(x),\quad
    u_{t_0}(x)=p(x) \qquad \mbox{for all $x \in \real^d$},
\end{align*}
with $s \in[t_0,k \stepsize]$, the unique solution satisfies $u_{k
  \stepsize}=\discretized{\pi}_{k \stepsize}'=\discretized{\pi}_{k \stepsize}$ according to
Lemma~\ref{lemma-mimic-forward-euler}. A nice property about Fisher
information is that, it is non-increasing along the flow of heat
kernel:
\begin{lemma}\label{lemma-non-increasing-fisher-heat}
For the heat equation $\frac{\partial u_t}{\partial t}=\Delta u$ with
$u_0 \geq 0$, $\int u_0(x)dx=1$, and $u_0$ satisfying the conditions
in Appendix~\ref{sect:appendix-coarse-estimates}, we have that $\int_{\real^d} u_t(x) \Vert \log u_t(x)
\Vert dx$ is non-increasing in $t$.
\end{lemma}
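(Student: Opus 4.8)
The statement to prove is the classical monotonicity of Fisher information along the heat semigroup (we read the displayed quantity as the Fisher information $I(u_t)\defn\int_{\realdim} u_t(x)\Vert\nabla\log u_t(x)\Vert_2^2\,dx$, which is precisely the object fed into Lemma~\ref{lemma-fisher-one-step-blow-up} and Lemma~\ref{lemma-regularity-relative-difference}). The plan is to avoid differentiating $I$ in time and instead use the semigroup structure directly: writing $G_s$ for the $d$-dimensional heat kernel, the solution satisfies $u_{t+s}=G_s * u_t$ for every $s\ge 0$, so it suffices to establish $I(u_{t+s})\le I(u_t)$. First I would record two routine facts: for $t>0$ the density $u_t$ is everywhere strictly positive (convolution of the nonnegative, nonzero $u_0$ with a strictly positive Gaussian), so $\nabla\log u_t$ is well defined; and the Gaussian tail bounds assumed in Appendix~\ref{sect:appendix-coarse-estimates} propagate from $u_0$ to $u_t$ and to $\nabla u_t$, which legitimizes differentiating under the convolution integral, giving $\nabla u_{t+s}(x)=\int_{\realdim} G_s(x-y)\,\nabla u_t(y)\,dy$.

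The core is a conditional-expectation / Jensen argument. Fix $s\ge 0$ and $x\in\realdim$, and let $\mu_x$ be the probability measure on $\realdim$ with density $y\mapsto G_s(x-y)u_t(y)/u_{t+s}(x)$ --- by Bayes' rule this is the conditional law of the time-$t$ location given that the time-$(t+s)$ location equals $x$. Dividing the previous display by $u_{t+s}(x)$ yields
\[
\nabla\log u_{t+s}(x)=\int_{\realdim}\nabla\log u_t(y)\,d\mu_x(y),
\]
i.e. the time-$(t+s)$ score at $x$ is the $\mu_x$-average of the time-$t$ score. Applying Jensen's inequality for the convex map $v\mapsto\Vert v\Vert_2^2$ gives $\Vert\nabla\log u_{t+s}(x)\Vert_2^2\le\int_{\realdim}\Vert\nabla\log u_t(y)\Vert_2^2\,d\mu_x(y)$; multiplying by $u_{t+s}(x)$, integrating in $x$, and interchanging the order of integration (Tonelli, again justified by the tail bounds) together with $\int_{\realdim}G_s(x-y)\,dx=1$ collapses the bound to
\[
I(u_{t+s})=\int_{\realdim}u_{t+s}(x)\Vert\nabla\log u_{t+s}(x)\Vert_2^2\,dx\le\int_{\realdim}u_t(y)\Vert\nabla\log u_t(y)\Vert_2^2\,dy=I(u_t).
\]
Since $s\ge0$ and $t>0$ are arbitrary, $t\mapsto I(u_t)$ is non-increasing on $(0,\infty)$, and finiteness and continuity at $t=0$ under the stated regularity of $u_0$ extend this to $[0,\infty)$, which is the claim.

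I expect the only genuine work to be technical bookkeeping rather than conceptual: justifying $\nabla(G_s * u_t)=G_s * \nabla u_t$ and the Tonelli interchange (both immediate from the exponential tail control of $u_t$ and $\nabla u_t$ supplied by Appendix~\ref{sect:appendix-coarse-estimates}), and handling the null set $\{u_0=0\}$ at the initial time. If a fully self-contained argument in the style of the other De Bruijn-type identities in the paper is preferred, the alternative is to differentiate $I(u_t)=\int\Vert\nabla u_t\Vert_2^2/u_t$ in time, substitute $\partial_t u_t=\Delta u_t$, and integrate by parts twice to reach $\frac{d}{dt}I(u_t)=-2\int u_t\,\Vert\nabla^2\log u_t\Vert_{\mathrm{HS}}^2\,dx\le 0$; there the obstacle shifts to verifying that the boundary terms in the two integrations by parts vanish, which again is exactly what the Appendix decay conditions are for. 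Either route yields the asserted monotonicity.
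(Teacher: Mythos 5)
Your proposal is correct and is essentially the paper's own argument: the paper writes $u_t = u_0 \star \phi_t$, expresses $\nabla\log u_t(x)$ as the ratio $\int \nabla\log u_0(x-y)\,u_0(x-y)\phi_t(y)dy \big/ \int u_0(x-y)\phi_t(y)dy$ (your conditional expectation under $\mu_x$), and applies Cauchy--Schwarz/Jensen followed by Tonelli, exactly as you do. The only cosmetic difference is that you invoke the semigroup property to compare $I(u_{t+s})$ with $I(u_t)$ for arbitrary $t$, whereas the paper compares directly with $I(u_0)$, but the mechanism is identical.
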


Since $\stepsize \in (0, \frac{1}{2\smooth}]$ by assumption, for $t_0
\in[(k-1)\stepsize,k \stepsize]$, we obtain:
\begin{align}
    \int \discretized{\pi}_{k \stepsize}\Vert \nabla \log \discretized{\pi}_{k
      \stepsize}\Vert_2^2 \leq \int p \Vert \nabla \log p \Vert_2^2 \leq 8
    \int \discretized{\pi}_{t_0}\Vert \nabla \log \discretized{\pi}_{t_0}\Vert_2^2+32
    \stepsize^2d^2\hessianlip^2,
\end{align}
which completes the proof of
Lemma~\ref{lemma-regularity-relative-difference}.


\subsection{Moment Estimate under Dissipative Assumption}
\label{sec:momentestimate}

In this section, we bound the moments of the process $\discretized X_{k \stepsize}$
along the path of the discretized Langevin diffusion.  In order to do
so, we leverage Assumption~\ref{assume-strong-dissipative}, as stated
in the following:
\begin{lemma}
  \label{lemma-tail-strong-dissipative}
  Suppose that Assumptions~\ref{assume-strong-dissipative}
  and~\ref{assume-smooth-initialize} hold for the interpolated
  process~\eqref{eq:def-interpolation}. Then there is a universal
  constant $C > 0$ such that
\begin{align} \sup_{t \geq 0} \left(
  \mathbb{E}\Vert \discretized{X}_t \Vert_2^p \right)^{\frac{1}{p}}\leq
  C\left( \sigma_0 \sqrt{pd} + \sqrt{\frac{p + \beta + d}{\mu}}
  \right) \qquad \mbox{for all $p \geq 1$.}
  \end{align}
\end{lemma}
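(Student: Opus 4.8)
The plan is to obtain the bound from a Lyapunov (drift) inequality for $\enorm{\discretized{X}_t}^2$ that survives discretization, and then to bootstrap from the second moment to all moments $p \ge 1$ using It\^o's formula together with the Burkholder--Davis--Gundy (BDG) inequality for continuous martingales. Throughout I use, besides Assumptions~\ref{assume-strong-dissipative} and~\ref{assume-smooth-initialize}, the linear-growth bound $\enorm{b(x)} \le A_0 + \smooth\enorm{x}$ coming from Assumption~\ref{assume-lipschitz-drift} and the standing restriction $\stepsize < \tfrac{1}{2\smooth}$.

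\emph{Step 1 (uniform second moment).} On each interval $t \in [k\stepsize,(k+1)\stepsize]$ the process~\eqref{eq:def-interpolation} obeys $d\discretized{X}_t = b(\discretized{X}_{k\stepsize})\,dt + dB_t$, so It\^o's formula gives $d\enorm{\discretized{X}_t}^2 = 2\inprod{\discretized{X}_t}{b(\discretized{X}_{k\stepsize})}\,dt + 2\inprod{\discretized{X}_t}{dB_t} + d\,dt$. I would decompose
\begin{align*}
\inprod{\discretized{X}_t}{b(\discretized{X}_{k\stepsize})} = \inprod{\discretized{X}_{k\stepsize}}{b(\discretized{X}_{k\stepsize})} + \inprod{\discretized{X}_t - \discretized{X}_{k\stepsize}}{b(\discretized{X}_{k\stepsize})},
\end{align*}
bound the first summand by $-\mu\enorm{\discretized{X}_{k\stepsize}}^2 + \beta$ via Assumption~\ref{assume-strong-dissipative}, and control the second using $\discretized{X}_t - \discretized{X}_{k\stepsize} = (t-k\stepsize)b(\discretized{X}_{k\stepsize}) + (B_t - B_{k\stepsize})$: after taking expectations the Brownian increment drops out (it is conditionally centered and independent of $\discretized{X}_{k\stepsize}$), and the linear-growth bound together with $\stepsize < \tfrac{1}{2\smooth}$ shows the rest is at most $O(\stepsize)\bigl(\Exs\enorm{\discretized{X}_{k\stepsize}}^2 + A_0^2\bigr)$. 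Since $\Exs\enorm{\discretized{X}_t}^2$ and $\Exs\enorm{\discretized{X}_{k\stepsize}}^2$ agree up to a $1 + O(\stepsize)$ factor, this yields $\frac{d}{dt}\Exs\enorm{\discretized{X}_t}^2 \le -c_1\mu\,\Exs\enorm{\discretized{X}_t}^2 + c_2(A_0^2 + \beta + d)$, whence by Gr\"onwall $\sup_{t\ge0}\Exs\enorm{\discretized{X}_t}^2 \lesssim \Exs\enorm{X_0}^2 + \tfrac{A_0^2 + \beta + d}{\mu}$; Assumption~\ref{assume-smooth-initialize} bounds $\Exs\enorm{X_0}^2 \lesssim \sigma_0^2 d$ (since $\pi_0$ is dominated by a Gaussian with coordinate variance $O(\sigma_0^2)$), and distant dissipativity forces $A_0^2 \lesssim \beta\smooth$, recovering the $p = 2$ case of the claim.

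\emph{Step 2 (bootstrap).} Applying It\^o's formula to $\enorm{\discretized{X}_t}^{2m}$ for $m \in \mathbb{N}_+$, the drift now contributes $-2m\mu\enorm{\discretized{X}_t}^{2m}$ plus the same frozen-drift discrepancy as before, the It\^o correction contributes at most $c\,m(m+d)\enorm{\discretized{X}_t}^{2m-2}$, and the local-martingale part $\int 2m\enorm{\discretized{X}_s}^{2m-2}\inprod{\discretized{X}_s}{dB_s}$ is handled by the BDG inequality, which justifies the passage to suprema and expectations and makes it vanish. Using Young's inequality to trade $\enorm{\discretized{X}_t}^{2m-2}$ against $\enorm{\discretized{X}_t}^{2m}$, and then the substitution $\Exs\enorm{\discretized{X}_t}^{2m} = w_t^{m}$ (which linearizes the resulting inequality into $\dot w_t \le -c_1\mu w_t + c_2(m + \beta + d)$), one gets $\sup_{t\ge0} w_t \lesssim \bigl(\Exs\enorm{X_0}^{2m}\bigr)^{1/m} + \tfrac{m+\beta+d}{\mu}$, i.e.
\begin{align*}
\sup_{t\ge0}\bigl(\Exs\enorm{\discretized{X}_t}^{2m}\bigr)^{1/(2m)} \lesssim \sigma_0\sqrt{md} + \sqrt{\tfrac{m + \beta + d}{\mu}},
\end{align*}
again using Assumption~\ref{assume-smooth-initialize} for the initial term. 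Finally, for any real $p \ge 1$ pick $m$ with $2(m-1) \le p \le 2m$; then $\bigl(\Exs\enorm{\discretized{X}_t}^p\bigr)^{1/p} \le \bigl(\Exs\enorm{\discretized{X}_t}^{2m}\bigr)^{1/(2m)}$ by monotonicity of $L^p$-norms, and $m \le p$ for $p \ge 2$ (while $p < 2$ is subsumed in the $m = 1$ bound), so the displayed bound gives exactly the asserted estimate $C\bigl(\sigma_0\sqrt{pd} + \sqrt{(p+\beta+d)/\mu}\bigr)$.

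\emph{Main obstacle.} The crux is the frozen-drift discrepancy $\inprod{\discretized{X}_t - \discretized{X}_{k\stepsize}}{b(\discretized{X}_{k\stepsize})}$: one must show it is genuinely of lower order --- $O(\stepsize)$ times the Lyapunov function --- so that the dissipative drift $-\mu\enorm{\cdot}^2$ is not overwhelmed, and one must check that the constants that appear are truly universal; this is precisely where the step-size condition $\stepsize < \tfrac{1}{2\smooth}$ and the Lipschitz/linear-growth structure of $b$ enter. A secondary point is that $B_t - B_{k\stepsize}$ is correlated with $\discretized{X}_t$ once it sits inside the power $\enorm{\discretized{X}_t}^{2m-2}$; the clean fix is to prove the recursion first at the grid times $k\stepsize$, where $\discretized{X}_{k\stepsize}$ is $\discretized{\mathcal{F}}_{k\stepsize}$-measurable and the increment is independent so all cross terms vanish, and then to fill in $t \in (k\stepsize,(k+1)\stepsize)$ using that there the process is a Brownian motion with constant drift, whose moments are explicit.
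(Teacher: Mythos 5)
Your route is genuinely different from the paper's, and it stalls exactly at the point you yourself call the main obstacle. In Step 1, after taking expectations, the frozen-drift discrepancy contributes $2(t-k\stepsize)\,\mathbb{E}\Vert b(\discretized{X}_{k\stepsize})\Vert_2^2 \leq 4\stepsize\bigl(A_0^2+\smooth^2\,\mathbb{E}\Vert \discretized{X}_{k\stepsize}\Vert_2^2\bigr)$ to the drift of $\mathbb{E}\Vert\discretized{X}_t\Vert_2^2$. Under the only available restriction $\stepsize<\tfrac{1}{2\smooth}$, the coefficient $\stepsize\smooth^2$ is bounded merely by $\smooth/2$, which can vastly exceed $\mu$; so this term is not ``$O(\stepsize)$ times the Lyapunov function'' in any sense that lets it be absorbed by $-2\mu\,\mathbb{E}\Vert\cdot\Vert_2^2$, and the inequality $\tfrac{d}{dt}\mathbb{E}\Vert\discretized{X}_t\Vert_2^2 \leq -c_1\mu\,\mathbb{E}\Vert\discretized{X}_t\Vert_2^2+c_2(A_0^2+\beta+d)$ with universal constants does not follow. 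The obstruction is not merely technical: in $d=2$ take $b(x)=L_0Rx-\mu x$ with $R$ the rotation by $\pi/2$, which satisfies Assumptions~\ref{assume-lipschitz-drift}--\ref{assume-strong-dissipative} with Lipschitz constant $\smooth\leq L_0+\mu$ and dissipativity constants $(\mu,\beta)$ for any $\beta>0$. The one-step Euler map $A=I+\stepsize(L_0R-\mu I)$ has squared operator norm $(1-\stepsize\mu)^2+\stepsize^2L_0^2>1$ as soon as $\stepsize>2\mu/(\mu^2+L_0^2)$, a range compatible with $\stepsize<\tfrac{1}{2\smooth}$ whenever $\mu$ is small relative to $L_0$; the grid covariance recursion $\Sigma_{k+1}=A\Sigma_kA^\top+\stepsize I$ then makes $\mathbb{E}\Vert\discretized{X}_{k\stepsize}\Vert_2^2$ grow geometrically, so no contraction inequality of the form you posit can hold. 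Your Step 1 can only be rescued by strengthening the step-size condition to $\stepsize\lesssim\mu/\smooth^2$, which changes the scope of the statement (note also that your argument needs Assumption~\ref{assume-lipschitz-drift}, which is not among the lemma's stated hypotheses).

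For comparison, the paper never meets this term: it first replaces the frozen-drift interpolation by the time-inhomogeneous Markov diffusion $\interpolated{X}_t$ with drift $\discretized{b}_t$ (same one-time marginals, by Lemma~\ref{lemma-Fokker-Planck-for-interploated}), applies It\^{o}'s formula to $e^{2\mu t}\Vert\interpolated{X}_t\Vert_2^2$, bounds the drift inner product directly by the dissipativity inequality, and obtains all $p$-th moments in one stroke from the Burkholder--Davis--Gundy inequality at exponent $p/2$ together with a self-bounding choice of the free parameter $A$ --- so no $\stepsize\Vert b\Vert_2^2$ term and no absorption against $\mu$ is required (one may remark that applying Assumption~\ref{assume-strong-dissipative} to the averaged drift $\discretized{b}_s$ rather than to $b$ is itself asserted there without justification, and your rotation-type example shows that step deserves scrutiny). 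Your secondary devices --- proving the recursion at grid times to kill the correlation between $\Vert\discretized{X}_t\Vert_2^{2m-2}$ and the Brownian increment, the $w_t=(\mathbb{E}\Vert\discretized{X}_t\Vert_2^{2m})^{1/m}$ linearization, monotonicity of $L^p$ norms for non-even $p$, and the observation $A_0^2\lesssim\beta\smooth$ --- are all sound, but they do not repair the Step 1 gap, which is where the proof as proposed fails.
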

The proof of this lemma is based on martingale $L^p$
estimates and the Burkholder-Davis-Gundy
inequality~\citep{MR0400380}. The details are postponed to
Appendix~\ref{sec:prooflemma-tail-strong-dissipative}. It is worth noting the bound depends
polynomially on the parameters $(\mu, \beta)$ in
Assumption~\ref{assume-strong-dissipative}.

Without Assumption~\ref{assume-strong-dissipative} and control on the
directions of the drift at a far distance, the moment of the iterates
can exponentially blow up. A simple counterexample is to let the
potential function be $U(x)=-\Vert x \Vert_2^2$ and $b(x)=x$. Then it
is easy to see that $\Vert \discretized{X}_t \Vert_2 \gtrsim e^{T}$ in this
setup. 
this exponential growth,
However Assumption~\ref{assume-strong-dissipative} can actually be significantly
weakened---as long as the potential function is non-negative. This comes at the cost of a
worse dependence (still polynomial) on $T$.

\subsection{Moment Estimates without Dissipative Assumptions}

Note that Lemma~\ref{lemma-tail-strong-dissipative} requires the
distant dissipative assumption~\ref{assume-strong-dissipative}. This
assumption can be relaxed with a slightly worse dependence on $T$, as
long as the potential function is non-negative. In this section, we
assume $b = - \nabla f$ with $f (x) \geq 0$ for any $x \in \real^d$.
Under these conditions, we have the following:
\begin{lemma}\label{lemma:moment-24-only-non-neg}
Suppose Assumption~\ref{assume-smooth-initialize}
and~\ref{assume-lipschitz-drift} holds, for the
process~\eqref{eq:def-interpolation} with $b = - \nabla f$ and $f \geq
0$, there is a universal constant $C > 0$, such that:
\begin{align*}
  \sup_{0 \leq k \leq T / \stepsize} \left( \Exs \vecnorm{
    \discretized{X}_{k \stepsize } }{2}^{4} \right) \leq C \cdot
  \left( f(0)^2 + \smooth^2 T^2 \sigma_0^4 (h_0 + d)^2 + \smooth^2
  T^4 d^2 \right),
\end{align*}
for some universal constant $C > 0$.
\end{lemma}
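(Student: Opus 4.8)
The plan is to exploit the extra structure $b=-\nabla f$ with $f\ge 0$ through an energy (Lyapunov) argument on the iterates $\discretized{X}_{k\stepsize}$, rather than a direct recursion for $\Exs\Vert\discretized{X}_{k\stepsize}\Vert_2^{2}$ (which, absent convexity, cannot avoid exponential growth in $T$). The two elementary facts I would establish first are both consequences of Assumption~\ref{assume-lipschitz-drift} together with $f\ge 0$: applying the descent lemma at $x$ with the point $x-\smooth^{-1}\nabla f(x)$ and using $f\ge 0$ gives $\Vert\nabla f(x)\Vert_2^{2}\le 2\smooth f(x)$ for all $x$; in particular $A_0^{2}=\Vert\nabla f(0)\Vert_2^{2}\le 2\smooth f(0)$, and a second descent-lemma step from the origin then yields $f(x)\le 2f(0)+\smooth\Vert x\Vert_2^{2}$. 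These let me pass back and forth between $f$ and $\Vert\cdot\Vert_2^{2}$; in particular $\Exs f(\discretized{X}_0)$ and $\Exs f(\discretized{X}_0)^{2}$ are controlled by $f(0)$ and by $\Exs\Vert\discretized{X}_0\Vert_2^{2}$, $\Exs\Vert\discretized{X}_0\Vert_2^{4}$, which under Assumption~\ref{assume-smooth-initialize} (via the coarse estimate, Lemma~\ref{lemma-coarse-estimate}) are of order $\sigma_0^{2}(h_0+d)$ and $\sigma_0^{4}(h_0+d)^{2}$.

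Next I would carry out the energy-dissipation bookkeeping. Writing one Euler step as $\discretized{X}_{(k+1)\stepsize}=\discretized{X}_{k\stepsize}-\stepsize\nabla f(\discretized{X}_{k\stepsize})+\sqrt{\stepsize}\,\xi_k$ and applying the descent lemma gives
\[
f(\discretized{X}_{(k+1)\stepsize})\le f(\discretized{X}_{k\stepsize})-\stepsize\Big(1-\tfrac{\smooth\stepsize}{2}\Big)\Vert\nabla f(\discretized{X}_{k\stepsize})\Vert_2^{2}+M_k+N_k+\tfrac{\smooth\stepsize d}{2},
\]
where the term $M_k$ linear in $\xi_k$ and $N_k=\tfrac{\smooth\stepsize}{2}(\Vert\xi_k\Vert_2^{2}-d)$ are martingale differences for the natural filtration. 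Since $\stepsize<\tfrac1{2\smooth}$ we have $1-\tfrac{\smooth\stepsize}{2}>\tfrac34$, so taking expectations gives $\Exs f(\discretized{X}_{(k+1)\stepsize})\le\Exs f(\discretized{X}_{k\stepsize})+\tfrac{\smooth\stepsize d}{2}$, hence $\sup_{k\le N}\Exs f(\discretized{X}_{k\stepsize})\le\Exs f(\discretized{X}_0)+\tfrac{\smooth dT}{2}$ and, summing the dissipation term, $\stepsize\sum_{j<N}\Exs\Vert\nabla f(\discretized{X}_{j\stepsize})\Vert_2^{2}\lesssim\Exs f(\discretized{X}_0)+\smooth dT$. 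Telescoping the inequality (now discarding the non-positive $\Vert\nabla f\Vert_2^{2}$ term), squaring, and using orthogonality of the martingale differences, $\Exs M_j^{2}\le\stepsize\,\Exs\Vert\nabla f(\discretized{X}_{j\stepsize})\Vert_2^{2}\le 2\smooth\stepsize\,\Exs f(\discretized{X}_{j\stepsize})$ and $\Exs N_j^{2}=\tfrac{\smooth^{2}\stepsize^{2}d}{2}$, yields $\sup_{k\le N}\Exs f(\discretized{X}_{k\stepsize})^{2}\lesssim\Exs f(\discretized{X}_0)^{2}+\smooth^{2}d^{2}T^{2}$ up to lower-order terms.

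Finally I would convert the energy bounds into a fourth-moment bound on position. Decompose $\discretized{X}_{k\stepsize}=\discretized{X}_0-\stepsize\sum_{j<k}\nabla f(\discretized{X}_{j\stepsize})+\sqrt{\stepsize}\sum_{j<k}\xi_j$. The Gaussian partial sum is $\NORMAL(0,k\stepsize I_d)$, with fourth moment $O(T^{2}d^{2})$; the initial term $\discretized{X}_0$ is handled by the moment bounds of the previous paragraph. For the drift partial sum, Cauchy--Schwarz gives $\Vert\stepsize\sum_{j<k}\nabla f(\discretized{X}_{j\stepsize})\Vert_2^{2}\le(k\stepsize)\big(\stepsize\sum_{j<k}\Vert\nabla f(\discretized{X}_{j\stepsize})\Vert_2^{2}\big)\le T\big(\stepsize\sum_{j<k}\Vert\nabla f(\discretized{X}_{j\stepsize})\Vert_2^{2}\big)$, and the bracketed sum is bounded, thanks to the telescoped energy inequality, by $\tfrac43\big(f(\discretized{X}_0)+\tfrac{\smooth dT}{2}+\sum_{j<k}(M_j+N_j)\big)$; squaring, taking expectations, and inserting the martingale-difference variance bounds reduces this to $\Exs f(\discretized{X}_0)^{2}$ and $\smooth^{2}d^{2}T^{2}$, which the previous step controls. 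Collecting the three pieces, substituting $\Exs f(\discretized{X}_0)\lesssim f(0)+\smooth\sigma_0^{2}(h_0+d)$ and $\Exs f(\discretized{X}_0)^{2}\lesssim f(0)^{2}+\smooth^{2}\sigma_0^{4}(h_0+d)^{2}$, and simplifying with $\smooth\stepsize\le\tfrac12$ produces the claimed bound.

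The hard part is obtaining polynomial — not exponential — dependence on $T$. Without convexity the only available bound on the radial term is $\langle x,\nabla f(x)\rangle\ge-f(0)-\tfrac{\smooth}{2}\Vert x\Vert_2^{2}$, so a naive recursion for $\Exs\Vert\discretized{X}_{k\stepsize}\Vert_2^{2}$ picks up a multiplicative factor $1+\Theta(\smooth\stepsize)$ at each step and blows up like $e^{\smooth T}$. The energy route circumvents this by controlling the total displacement $\Vert\stepsize\sum_{j<k}\nabla f(\discretized{X}_{j\stepsize})\Vert_2$ by $\sqrt{T}$ times the square root of the (dissipation-controlled) cumulative gradient energy $\stepsize\sum_{j<k}\Vert\nabla f(\discretized{X}_{j\stepsize})\Vert_2^{2}$, which is only $O(f(\discretized{X}_0)+\smooth dT)$. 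The remaining effort is routine but must be done carefully: tracking all dimension and smoothness constants through the $\chi^{2}$ and martingale second- and fourth-moment estimates, and invoking the coarse estimate of Lemma~\ref{lemma-coarse-estimate} to initialize the recursion.
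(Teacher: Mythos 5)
Your proposal is correct, and its engine is the same as the paper's: the descent-lemma energy argument along the Euler iterates, using $f \ge 0$ to telescope $f(\discretized{X}_{(k+1)\stepsize}) - f(\discretized{X}_{k\stepsize}) \le -\stepsize(1-\tfrac{\smooth\stepsize}{2})\Vert \nabla f(\discretized{X}_{k\stepsize})\Vert_2^2 + \text{(martingale)} + \tfrac{\smooth\stepsize}{2}\Vert\xi_k\Vert_2^2$ and thereby control both $\stepsize\sum_k \Exs\Vert\nabla f(\discretized{X}_{k\stepsize})\Vert_2^2$ and the second moment of that cumulative gradient energy, is exactly the content of the paper's Lemma~\ref{lemma:gradient-sum-bound-24-nonnegative}. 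Where you diverge is in converting gradient-energy control into position moments. The paper passes to the auxiliary diffusion $\interpolated{X}_t$ with drift $\discretized{b}_t$ (whose one-time marginals agree with those of $\discretized{X}_t$ by Lemma~\ref{lemma-Fokker-Planck-for-interploated}), applies It\^{o}'s formula to $\Vert\interpolated{X}_t\Vert_2^2$ and $\Vert\interpolated{X}_t\Vert_2^4$, uses Jensen to replace $\discretized{b}_s$ by the discrete gradients, and solves a self-bounding quadratic inequality for $\sup_{t\le T}\Exs\Vert\interpolated{X}_t\Vert_2^{2}$ and $\sup_{t\le T}\Exs\Vert\interpolated{X}_t\Vert_2^{4}$. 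You instead stay entirely in discrete time, writing $\discretized{X}_{k\stepsize} = \discretized{X}_0 - \stepsize\sum_{j<k}\nabla f(\discretized{X}_{j\stepsize}) + \sqrt{\stepsize}\sum_{j<k}\xi_j$ and bounding the drift sum by Cauchy--Schwarz, $\Vert\stepsize\sum_{j<k}\nabla f(\discretized{X}_{j\stepsize})\Vert_2^2 \le T\,\stepsize\sum_{j<k}\Vert\nabla f(\discretized{X}_{j\stepsize})\Vert_2^2$, whose square is exactly what the energy argument controls. This is more elementary: no interpolated process, no It\^{o} isometry, no quadratic inequality; and it suffices because the lemma only concerns the grid points $k\stepsize$. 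What the paper's route buys in exchange is a bound uniform over continuous time $t\in[0,T]$, which is what is actually consumed downstream (e.g., the term $\int_0^T \Exs\Vert\discretized{X}_t\Vert_2^2\,dt$ in Proposition~\ref{prop-fisher-grid}); your decomposition extends to intermediate times with one extra line (adding $(t-k\stepsize)b(\discretized{X}_{k\stepsize})$ plus a Brownian increment), but as written it proves only the stated grid-point bound. Your auxiliary facts $\Vert\nabla f(x)\Vert_2^2 \le 2\smooth f(x)$ and $f(x)\le 2f(0)+\smooth\Vert x\Vert_2^2$ are correct and play the same role as the paper's final substitution of $\Exs f(\discretized{X}_0)$, $\Exs f(\discretized{X}_0)^2$ in terms of $f(0)$ and the initial moments.

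One shared bookkeeping caveat, not a gap specific to you: your final collection produces terms of the form $T^2 f(0)^2$ and $T^2 d^2$ (from $T^2\Exs f(\discretized{X}_0)^2$ and the Gaussian partial sum), which match the stated bound $C(f(0)^2 + \smooth^2 T^2\sigma_0^4(h_0+d)^2 + \smooth^2 T^4 d^2)$ only after the same loose absorptions the paper itself performs in its last display; so your argument establishes the lemma to exactly the same degree of rigor as the paper's own proof.
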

By plugging the fourth moment obtained by
Lemma~\ref{lemma:moment-24-only-non-neg} into Propostion~\ref{prop-KL-derivative-fisher-moment} and Proposition~\ref{prop-fisher-grid}, Theorem~\ref{ThmWeakAssumption}
can then be established.

\subsubsection{Proof of Lemma~\ref{lemma:moment-24-only-non-neg}}
In this section, we present the proof of the moment bound given in
Lemmas~\ref{lemma:moment-24-only-non-neg}.

Let the process $\interpolated{X}_t$ be the time-inhomogeneous diffusion process defined by generator $- \inprod{\discretized{b}_t}{\nabla} + \frac{1}{2}\Delta$, starting from $\discretized{X}_0$. By Lemma~\ref{lemma-Fokker-Planck-for-interploated}, the one-time marginal laws of $\interpolated{X}_t$ and $\discretized{X}_t$ are the same. So we only need to show the moment bounds for $\interpolated{X}_t$.

    By It\^{o}'s formula, we have:
    \begin{align*}
        \vecnorm{\interpolated{X}_t}{2}^2 = \vecnorm{\interpolated{X}_{0}}{2}^2  - 2\int_0^t \inprod{\interpolated{X}_s}{\discretized{b}_s (\interpolated{X}_s)} ds + 2\int_0^t \inprod{\interpolated{X}_s}{d B_s} + t d.
    \end{align*}
    Taking expectation for both sides, we obtain:
    \begin{align*}
        \Exs \vecnorm{\interpolated{X}_t}{2}^2 \leq & \Exs \vecnorm{\interpolated{X}_0}{2}^2 + 2 \int_0^t \Exs \left( \vecnorm{\interpolated{X}_s}{2} \cdot \vecnorm{ \discretized{b}_s (\interpolated{X}_s)}{2} \right) ds + td \\
        \leq & \Exs \vecnorm{\interpolated{X}_0}{2}^2 + 2 \left(\int_0^t \Exs \vecnorm{
        \interpolated{X}_s}{2}^2 ds \right)^{\frac{1}{2}} \left(\int_0^t \Exs \vecnorm{
        \discretized{b}_s (\interpolated{X}_s) }{2}^2 ds \right)^{\frac{1}{2}} + td\\
        \leq & \Exs \vecnorm{\interpolated{X}_0}{2}^2 + 2 \left(\int_0^t \Exs \vecnorm{
        \interpolated{X}_s}{2}^2 ds \right)^{\frac{1}{2}} \left( \stepsize \sum_{k = 0}^{t / \stepsize} \Exs \vecnorm{
        \nabla f (\interpolated{X}_{k \stepsize}) }{2}^2 \right)^{\frac{1}{2}} + td.
    \end{align*}
    If an upper bound on average mean squared norm of $\nabla f(\discretized{X}_{k \stepsize})$ can be obtained, the conclusion directly follows from solving an ordinary differential inequalities using variants of Gr\"{o}nwall lemma. Therefore, we need the following lemma about the squared gradient norms:
        \begin{lemma} \label{lemma:gradient-sum-bound-24-nonnegative}
        Suppose Assumption~\ref{assume-lipschitz-drift} and Assumption~\ref{assume-smooth-initialize} holds, for the process~\eqref{eq:def-interpolation} with $b = - \nabla f, ~f \geq 0$ and $\stepsize < 1 / 2 \smooth$, there is a universal constant $C > 0$, such that:
    \begin{align*}
        \Exs \stepsize \sum_{k = 0}^{T / \stepsize} \vecnorm{ \nabla f(\discretized{X}_{k \stepsize } ) }{2}^2 \leq & 2 \Exs f(\discretized{X}_0) + \smooth T d,\\
        \Exs \left(\stepsize \sum_{k = 0}^{T / \stepsize} \vecnorm{ \nabla f(\discretized{X}_{k \stepsize } ) }{2}^2 \right)^2 \leq &  12 \Exs f(\discretized{X}_0)^2 + 24 \Exs f(\discretized{X}_0) + 12 T \smooth d + 9 \smooth^2 T^2 d^2.
    \end{align*}
    \end{lemma}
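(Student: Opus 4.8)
The plan is to recognize this as a stochastic version of the classical ``descent lemma'' for gradient descent, and then to bootstrap the first-moment bound into the second-moment bound via a short martingale computation. Since the interpolated process at grid points coincides with the Euler iterates, we may work directly with the recursion~\eqref{eq:langevin-euler}, which for $b=-\nabla f$ reads $\discretized{X}_{(k+1)\stepsize}=\discretized{X}_{k\stepsize}-\stepsize\nabla f(\discretized{X}_{k\stepsize})+\sqrt{\stepsize}\,\xi_k$, with $\xi_k$ a standard Gaussian independent of $\discretized{\mathcal{F}}_{k\stepsize}$. Writing $G_k\defn\vecnorm{\nabla f(\discretized{X}_{k\stepsize})}{2}^2$ and using that Assumption~\ref{assume-lipschitz-drift} makes $f$ be $\smooth$-smooth, the standard quadratic upper bound on $f$ along the increment gives, after expanding $\tfrac{\smooth}{2}\vecnorm{\discretized{X}_{(k+1)\stepsize}-\discretized{X}_{k\stepsize}}{2}^2$ and using $\stepsize<1/(2\smooth)$ to absorb the deterministic $\tfrac{\smooth\stepsize^2}{2}G_k$ term into $-\stepsize G_k$, the per-step inequality
\[
\tfrac{\stepsize}{2}G_k \;\le\; f(\discretized{X}_{k\stepsize}) - f(\discretized{X}_{(k+1)\stepsize}) + \zeta_k\,\inprod{\nabla f(\discretized{X}_{k\stepsize})}{\xi_k} + \tfrac{\smooth\stepsize}{2}\vecnorm{\xi_k}{2}^2,
\]
where $\zeta_k\defn\sqrt{\stepsize}-\smooth\stepsize^{3/2}\in(0,\sqrt{\stepsize})$.

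For the first bound I would take expectations in the per-step inequality: the cross term vanishes because $\xi_k$ is centered and independent of $\discretized{\mathcal{F}}_{k\stepsize}$, and $\Exs\vecnorm{\xi_k}{2}^2=d$. Summing over $k=0,\dots,T/\stepsize-1$ telescopes the $f$-terms, and discarding the nonpositive boundary term $-\Exs f(\discretized{X}_{T})$ (this is exactly where $f\ge0$ is used) gives $\tfrac{\stepsize}{2}\sum_k\Exs G_k\le\Exs f(\discretized{X}_0)+\tfrac{\smooth T d}{2}$, i.e. the claimed $\stepsize\sum_k\Exs G_k\le 2\Exs f(\discretized{X}_0)+\smooth T d$.

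For the second bound I would instead keep the per-step inequality pathwise, sum it over $k$, and again drop $-f(\discretized{X}_{T})\le 0$ pathwise, obtaining $0\le\tfrac{\stepsize}{2}\sum_k G_k\le f(\discretized{X}_0)+M+\tfrac{\smooth\stepsize}{2}\sum_k\vecnorm{\xi_k}{2}^2$, where $M\defn\sum_k\zeta_k\inprod{\nabla f(\discretized{X}_{k\stepsize})}{\xi_k}$. Since both sides are nonnegative I can square and apply $(a+b+c)^2\le 3(a^2+b^2+c^2)$, then take expectations. The $f(\discretized{X}_0)^2$ term appears directly. For the noise term, $\sum_k\vecnorm{\xi_k}{2}^2$ is a sum of i.i.d.\ $\chi^2_d$ variables, so its second moment is $(\#\text{steps})^2d^2+2(\#\text{steps})d$; multiplying by $\smooth^2\stepsize^2$ and using $\stepsize\cdot\#\text{steps}=T$ and $\smooth\stepsize<\tfrac12$ turns this into $O(\smooth^2T^2d^2+\smooth Td)$. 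The remaining term $M$ is a martingale whose summands are orthogonal with conditional second moments $\zeta_k^2\Exs[\inprod{\nabla f(\discretized{X}_{k\stepsize})}{\xi_k}^2\mid\discretized{\mathcal{F}}_{k\stepsize}]=\zeta_k^2 G_k\le\stepsize G_k$, so $\Exs M^2\le\stepsize\sum_k\Exs G_k$, which is precisely the quantity already controlled by the first bound. Assembling the three pieces and tracking the numerical constants yields the stated inequality.

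The only genuinely non-routine point is this last step: one has to observe that the martingale $M$ has conditional quadratic variation $\sum_k\zeta_k^2 G_k\le\stepsize\sum_k G_k$, so that bounding $\Exs M^2$ reduces back to the first-moment estimate already in hand --- this is what closes the induction-free loop and keeps the dimension dependence polynomial. Everything else is bookkeeping: the step-size restriction $\stepsize<1/(2\smooth)$ is used only to guarantee $\zeta_k\le\sqrt{\stepsize}$ and that the Hessian term in the descent inequality is dominated by $-\tfrac{\stepsize}{2}G_k$, and some care is needed with the exact range of summation and with keeping the constants small enough to match the numbers $12,24,12,9$ in the statement.
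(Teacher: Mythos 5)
Your proposal is correct and follows essentially the same route as the paper's own proof: the same $\smooth$-smoothness descent inequality per step with coefficient $\sqrt{\stepsize}(1-\stepsize\smooth)$ on the noise cross term, the same telescoping and use of $f\ge 0$ to drop the terminal value, and for the second moment the same pathwise squaring via $(a+b+c)^2\le 3(a^2+b^2+c^2)$ combined with the martingale orthogonality $\Exs\big(\sum_k\inprod{\nabla f(\discretized{X}_{k\stepsize})}{\xi_k}\big)^2=\sum_k\Exs\vecnorm{\nabla f(\discretized{X}_{k\stepsize})}{2}^2$, which feeds back into the first bound, plus the $\chi^2_d$ second-moment computation for the noise sum. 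The constants work out with the same bookkeeping as in the paper, so there is nothing to fix.
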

    The proof of this lemma is postponed to Appendix~\ref{sec:appendix-proof-lemma-24-grad-norm}. The main idea of the proof is straightforward: large norm of $\nabla f$ will force the value of $f$ to go down along the dynamics of Langevin algorithm. Since $f$ is non-negative, the average mean squared norm can be bounded using the initial value of $f$. However, the Gaussian noise is non-trivial to deal with. The combinatorial techniques used in the proof of Lemma~\ref{lemma:gradient-sum-bound-24-nonnegative} are only able to deal with up to fourth moment. Fortunately, this is what the proof of Theorem~\ref{ThmWeakAssumption} needs.
    
    Plugging the first bound in Lemma~\ref{lemma:moment-24-only-non-neg} into above upper bound for $\Exs \vecnorm{\interpolated{X}_t}{2}^2$, and taking supremum with $t \in [0, T]$ for both sides, we obtain:
    \begin{align*}
        \sup_{0 \leq t \leq T}  \Exs \vecnorm{\interpolated{X}_t}{2}^2 \leq & \Exs \vecnorm{\interpolated{X}_0}{2}^2 + 2 \sqrt{ ( 2 \Exs f(\interpolated{X}_0) + \smooth T d) \int_0^T \Exs \vecnorm{
        \interpolated{X}_s}{2}^2 ds } + Td\\
        \leq & \Exs \vecnorm{\interpolated{X}_0}{2}^2 + 2 \sqrt{ ( 2 \Exs f(\interpolated{X}_0) + \smooth T d) T \sup_{0 \leq t \leq T} \Exs \vecnorm{
        \interpolated{X}_s}{2}^2 ds } + Td.
    \end{align*}
    Solving the quadratic equation, we obtain:
    \begin{align*}
         \sup_{0 \leq t \leq T}  \Exs \vecnorm{\interpolated{X}_t}{2}^2 \leq 2 \Exs \vecnorm{\interpolated{X}_0}{2}^2 + 4 T (\Exs f(\interpolated{X}_0) + \smooth T d ) + 2 Td.
    \end{align*}
    
    For the fourth moment, using Young's inequality, we obtain:
    \begin{align*}
        \Exs \vecnorm{\interpolated{X}_t}{2}^4 \leq 4 \Exs \vecnorm{\interpolated{X}_0}{2}^4 + 16 \underbrace{\Exs \left( \int_0^t \inprod{\interpolated{X}_s}{\discretized{b}_s (\interpolated{X}_s)} ds \right)^2}_{T_1} + 16 \underbrace{\Exs \left( \int_0^t \inprod{\interpolated{X}_s}{d B_s} \right)^2}_{T_2} + 4 t^2 d^2.
    \end{align*}
    For $T_1$, applying the Cauchy-Schwartz inequality multiple times, we have:
    \begin{align*}
        T_1 \leq & \Exs \left( \int_0^t \vecnorm{\interpolated{X}_s}{2} \cdot \vecnorm{\discretized{b}_s (\interpolated{X}_s)}{2} ds \right)^2\\
        \leq& \Exs \left( \int_0^t \vecnorm{\interpolated{X}_s}{2}^2 ds \cdot \int_0^t \vecnorm{\discretized{b}_s (\interpolated{X}_s) }{2}^2 ds \right) \\
        \leq & \left( \Exs \left( \int_0^t \vecnorm{\interpolated{X}_s}{2}^2 ds \right)^2 \right)^{\frac{1}{2}} \cdot \left( \Exs \left( \int_0^t \vecnorm{\discretized{b}_s (\interpolated{X}_s) }{2}^2 ds \right)^2 \right)^{\frac{1}{2}} \\
        \leq & \left( t \int_0^t \Exs \vecnorm{\interpolated{X}_s}{2}^4 ds  \right)^{\frac{1}{2}} \left( \Exs \left( \stepsize \sum_{k = 0}^{t / \stepsize} \vecnorm{\nabla f (X_{k \stepsize})}{2}^2 \right)^2 \right)^{\frac{1}{2}}\\
        \leq & 12 (\sqrt{\Exs f(\interpolated{X}_0)^2} + \smooth T d)\left( t \int_0^t \Exs \vecnorm{\interpolated{X}_s}{2}^4 ds  \right)^{\frac{1}{2}}
    \end{align*}
 For $T_2$, combining the It\^{o} isometry with above bounds on the
 expected norm yields
    \begin{align*}
        T_2 = \int_0^t \Exs \vecnorm{\interpolated{X}_s}{2}^2 ds \leq
        2 t \Exs \vecnorm{\interpolated{X}_0}{2}^2 + 4 t (t \Exs
        f(\interpolated{X}_0) + t^2 \smooth d ) + 2 t^2 d.
    \end{align*}
Similar to the second moment, by taking supremum over time, we obtain
 \begin{multline*}
   \sup_{0 \leq t \leq T} \Exs \vecnorm{\interpolated{X}_t}{2}^4 \leq
   4 \Exs \vecnorm{\interpolated{X}_0}{2}^4 + 192 \left( \sqrt{\Exs
     f(\interpolated{X}_0)^2} + \smooth T d \right) \left( T \int_0^T
   \Exs \vecnorm{\interpolated{X}_s}{2}^4 ds \right)^{\frac{1}{2}} \\
 + 16 \left( 2 T \Exs \vecnorm{\interpolated{X}_0}{2}^2 + 4 T (T \Exs
 f(\interpolated{X}_0) + T^2 \smooth d ) + 2 T^2 d \right) + 4 T^2
 d^2 \\
 \leq C \cdot \Exs \vecnorm{\interpolated{X}_0}{2}^4 + C T \left(
 \sqrt{\Exs f(\interpolated{X}_0)^2} + \smooth T d \right) \sqrt{
   \sup_{0 \leq t \leq T} \Exs \vecnorm{\interpolated{X}_s}{2}^4} \\ +
 C \left( T \Exs \vecnorm{\interpolated{X}_0}{2}^2 + T^2 \Exs
 f(\interpolated{X}_0) + T^3 \smooth d + T^2 d^2 \right).
 \end{multline*}
Solving the quadratic equation yields
\begin{align*}
  \sup_{0 \leq t \leq T} \Exs \vecnorm{\interpolated{X}_s}{2}^4 & \leq
  C \cdot \Exs \vecnorm{\interpolated{X}_0}{2}^4 + C T^2 \left( \Exs
  f(\interpolated{X}_0)^2 + \smooth^2 T^2 d^2 \right) + C T \Exs
  \vecnorm{\interpolated{X}_0}{2}^2\\
& \leq C' \cdot \left( f(0)^2 + \smooth^2 T^2 \Exs
  \vecnorm{\interpolated{X}_0}{2}^4 + \smooth^2 T^4 d^2 \right) \\
& \leq C' \cdot \left( f(0)^2 + \smooth^2 T^2 \sigma_0^2 (h_0 + d)^2 +
  \smooth^2 T^4 d^2 \right),
\end{align*}
which completes the proof.


\section{Discussion}

We have presented an improved non-asymptotic analysis of the
Euler-Maruyama discretization of the Langevin diffusion. We have shown
that, as long as the drift term is second-order smooth, the KL
divergence between the Langevin diffusion and its discretization is
bounded as $O(\stepsize^2 d^2 T)$. Importantly, this analysis obtains
the tight $O(\stepsize)$ rate for the Euler-Maruyama scheme (under
Wasserstein or TV distances), without assuming global
contractivity. This result serves as a convenient tool for the future
study of Langevin algorithms for sampling, optimization, and
statistical inference, as it allows to directly translate
continuous-time results into discrete time, with tight rates.

Note that our results only apply to the Langevin
diffusion. Considering the discretization of more general diffusions,
either with location-varying covariance or second-order derivatives as
the underdamped Langevin dynamics~\citep{cheng2017underdamped} is a
promising direction for further research.

\subsection*{Acknowledgements}  
 This work was partially supported by Office of Naval Research Grant
 ONR-N00014-18-1-2640 to MJW and National Science Foundation Grant
 NSF-CCF-1909365 to PLB and MJW.  We also acknowledge support from
 National Science Foundation grant NSF-IIS-1619362 to PLB. We thank Xiang Cheng, Yi-An Ma and Andre Wibisono for helpful discussions.



\appendix





\section{Proofs omitted from Section~\ref{sec:kl}}
\label{app:proof}

In this section, we collect the proofs of results from
Section~\ref{sec:kl}; in particular, these results involve bounds on
the derivative $\frac{d}{dt}\kull{\discretized{\pi}_t}{\pi_t }$.


\subsection{Proof of Lemma~\ref{lemma-kl-time-derivative}}

In order to bound the derivative of the KL divergence, we first need
to interchange the order of time derivative and the integration for KL
divergence. Note that
\begin{align*}
  \frac{\partial}{\partial t}\left(\discretized{\pi}\log
  \frac{\discretized{\pi}}{\pi}\right) & = \discretized{\pi}\left(\frac{\partial \log
    \discretized{\pi}}{\partial t}(\log \discretized{\pi}+1-\log \pi) - \frac{\partial
    \log \pi}{\partial t}\right) \\
  & =\discretized{\pi}\cdot \mathrm{poly}(\nabla \log \discretized{\pi},\Delta \log
  \discretized{\pi},\nabla \log{\pi},\Delta \log \pi,b,\discretized{b}).
\end{align*}
From Lemma~\ref{lemma-coarse-estimate}, the density
$\discretized{\pi}$ has a rapidly decaying tail, and the factor grows
polynomially with $y$, uniformly in a small neighborhood of
$t$. Therefore, the integrand admits a $L^1$ integrable dominating
function over a small neighborhood of $t$. By the dominated
convergence theorem, we can exchange the order of derivative and
integration, thereby obtaining
\begin{align}
  \frac{d}{dt}\kull{\discretized{\pi} }{ \pi } =
  \int_{\real^d}\frac{\partial}{\partial t}\left(\discretized{\pi}\log
  \frac{\discretized{\pi}}{\pi}\right)dx =
  \int_{\real^d}\frac{\partial \discretized{\pi}}{\partial t}(\log
  \discretized{\pi} + 1 - \log \pi) dx- \int_{\real^d} \frac{\partial
    \pi}{\partial t}\frac{\discretized{\pi}}{\pi} dx.
\end{align}
For the first term, by Remark~\ref{remark-integration-by-parts}, we
can apply the divergence theorem, and obtain:
\begin{align*}
  \int_{\real^d} \frac{\partial \discretized{\pi}}{\partial t}(\log
  \discretized{\pi}+1-\log \pi)dx=& \int_{\real^d}\left(\nabla
  \cdot(\discretized{- \pi}\discretized{b})+\frac{1}{2}\Delta
  \discretized{\pi}\right)(\log \discretized{\pi} + 1 -\log \pi)dx
  \\
& = -\int_{\real^d} \left( - \discretized{\pi} \discretized{b} +
  \frac{1}{2} \nabla \discretized{\pi} \right) \cdot ( \nabla \log
  \discretized{\pi} - \nabla \log \pi) dx.
\end{align*}
Turning to the second term, by divergence theorem justified in
Remark~\ref{remark-integration-by-parts}, we also have:
\begin{align*}
\int_{\real^d} \frac{\partial \pi}{\partial t}
\frac{\discretized{\pi}}{\pi} dx & = \int_{\real^d} \left(- \nabla
\cdot(\pi b) + \frac{1}{2} \Delta \pi
\right)\frac{\discretized{\pi}}{\pi} dx \\
& = -\int_{\real^d}\left( - \pi b+\frac{1}{2}\nabla \pi \right)\cdot
(\nabla \log \discretized{\pi} - \nabla \log \pi) \frac{\discretized{\pi}}{\pi} dx.
\end{align*}
Putting together the pieces yields
\begin{align*}
\frac{d}{dt} \kull { \discretized{\pi}}{ \pi } & = - \int_{\real^d}
\left( - \discretized{\pi} \discretized{b} + \frac{1}{2} \nabla
\discretized{\pi}\right) \cdot (\nabla \log \discretized{\pi} - \nabla
\log \pi)-\discretized{\pi}\left(- b + \frac{1}{2}\nabla \log \pi
\right) \cdot (\nabla \log \discretized{\pi} - \nabla \log \pi) dx \\
& = \int_{\real^d} \discretized{\pi} (\nabla \log \discretized{\pi}-\nabla \log \pi)
\cdot (\discretized{b}-b) dx - \frac{1}{2} \int_{\real^d}\discretized{\pi} \Vert
\nabla \log \discretized{\pi}-\nabla \log \pi \Vert_2^2 dx \\
& \overset{(i)}{\leq} \frac{1}{2} \int_{\real^d} \discretized{\pi}
\Vert \discretized{b}-b \Vert_2^2 dx,
\end{align*}
where step (i) uses Young's inequality (namely, $u\cdot v\leq
\frac{1}{2}u^2+\frac{1}{2}v^2$).


\subsection{Proof of Lemma~\ref{lemma-remainder}}

We prove now Lemma~\ref{lemma-remainder}, which gives a bound on the
reminder term $\discretized{r}_t$ of the Taylor series
expansion~\eqref{eq:taylor}.  Let us consider the norm of
$\discretized{r}_t$ and apply the triangle inequality:
\begin{align*}
  \Vert \discretized{r}_t(x)\Vert_2 & = \left \Vert
  \mathbb{E}\left(\int_0^1 s \nabla ^2b((1-s)\discretized{X}_t+s
  \discretized{X}_{k \stepsize})[\discretized{X}_{k
      \stepsize}-\discretized{X}_t,\discretized{X}_{k
      \stepsize}-\discretized{X}_t]ds \Big|\discretized{X}_t=x
  \right)\right \Vert_2 \\
& \leq \int_0^1 \mathbb{E}\left( \opnorm{\nabla
    ^2b((1-s)\discretized{X}_t+s \discretized{X}_{k \stepsize}) }
  \cdot \Vert \discretized{X}_{k \stepsize}-\discretized{X}_t
  \Vert_2^2 \Big|\discretized{X}_t=x \right) \\
  & \leq \frac{\hessianlip}{2}\mathbb{E}\left(\Vert \discretized{X}_{k
    \stepsize}-\discretized{X}_t \Vert_2^2 \Big|\discretized{X}_t=x
  \right)\\ \leq&\hessianlip \mathbb{E}\left(\Vert (t-k
  \stepsize)b(\discretized{X}_{k \stepsize})\Vert_2^2+\left \Vert
  \int_{k \stepsize}^t dB_s \right \Vert_2^2 \Big|\discretized{X}_t=x
  \right).
\end{align*}
Taking the global expectation leads to
\begin{align*}
  \mathbb{E}\Vert \discretized{r}_t(\discretized{X}_t)\Vert_2^2 \leq& \hessianlip^2
  \mathbb{E}\mathbb{E}\left(\Vert (t-k \stepsize)b(\discretized{X}_{k
    \stepsize})\Vert_2^2+\left \Vert \int_{k \stepsize}^t dB_s \right \Vert_2^2
  \Big|\discretized{X}_t \right)^2 \\ 
  \stackrel{(i)}{\leq} &\hessianlip^2 \mathbb{E}\left(\Vert (t-k
  \stepsize)b(\discretized{X}_{k \stepsize})\Vert_2^2+\left \Vert \int_{k \stepsize}^t dB_s
  \right \Vert_2^2 \right)^2 \\ 
  \stackrel{(ii)}{\leq}& 2\hessianlip^2 \mathbb{E}\left(\Vert
  (t-k \stepsize)b(\discretized{X}_{k \stepsize})\Vert_2^4+\left \Vert \int_{k \stepsize}^t
  dB_s \right \Vert_2^4 \right)\\ \leq& 8(t-k \stepsize)^4\hessianlip^2
  \left(A_0^4+\smooth^4 \mathbb{E}\Vert \discretized{X}_{k \stepsize}\Vert_2^4
  \right)+24(t-k \stepsize)^2\hessianlip^2 d^2,
\end{align*}
where step (i) follows from the Cauchy-Schwartz inequality; and step
(ii) follows from a variant of Young's inequality (namely, $(a + b)^2
\leq 2( a^2 + b^2)$).


\section{Proofs Omitted from Section~\ref{sec:regu}}
\label{sec-appendix-regularity}

In this section, we present the proofs omitted from
Section~\ref{sec:regu}.  In particular, these results involve upper
bounds on the Fisher information and the moments.


\subsection{Proof of Lemma~\ref{lemma-mimic-forward-euler}}

We first construct an interpolated process:
\begin{align}
\discretized{X}_s' = \discretized{X}_{t_0}+ (s-t_0)
\discretized{b}_{t_0} (\discretized{X}_{t_0}) + \int_{t_0}^s
dB_t,\quad \forall s \in[ t_0, k \stepsize].
  \end{align}
According to Lemma~\ref{lemma-Fokker-Planck-for-interploated}, the
density $\discretized{\pi}'_s$ of $\discretized{X}_s'$ satisfies the following
Fokker-Planck equation:
\begin{align}
  \frac{\partial \discretized{\pi}_t'}{\partial s}(x)=\nabla \cdot
  \left(\discretized{\pi}_t'(x)\mathbb{E}\left(\discretized{b}_{t_0}(\discretized{X}_{t_0})\big|\discretized{X}_s=x
  \right)\right)+\frac{1}{2}\Delta \discretized{\pi}_t'(x).
\end{align}
Note that for $(k-1)\stepsize \leq t_0 \leq s$, we have:
\begin{align}
  \mathbb{E}\left(\discretized{b}_{t_0}(\discretized{X}_{t_0})\big|\discretized{X}_s=x
  \right)=\mathbb{E}\left(\mathbb{E}\left(b(\discretized{X}_{(k-1)\stepsize})\big|
  \discretized{X}_{t_0}\right)\Big|\discretized{X}_s=x
  \right)=\mathbb{E}\left(b(\discretized{X}_{(k-1)\stepsize})\big|\discretized{X}_s=x
  \right)=\discretized{b}_s(x).
\end{align}
Plugging back into the Fokker-Planck equation, we get:
\begin{align}
  \frac{\partial \discretized{\pi}_t'}{\partial s}(x)=\nabla \cdot
  \left(\discretized{\pi}_t'(x)\discretized{b}_{s}(x)\right)+\frac{1}{2}\Delta
  \discretized{\pi}_t'(x),
\end{align}
which is exactly the same PDE as in
Lemma~\ref{lemma-Fokker-Planck-for-interploated}. Due to the
uniqueness of solution to parabolic equations, we have:
\begin{align}
  \discretized{\pi}_s=\discretized{\pi}_s',\quad \forall s \in[t_0,k \stepsize],
\end{align}
which proves the lemma.


\subsection{Proof of Lemma~\ref{lemma-fisher-one-step-blow-up}}

For $\stepsize \in (0, \frac{1}{2\smooth})$, it is easy to see that
$\phi$ is a one-to-one mapping, and $\frac{1}{2}I \preceq \nabla \phi
\preceq \frac{3}{2}I$.
\begin{align*}
 \int p(z)\Vert \nabla_z \log p(z)\Vert_2^2dz=&\int p(\phi(x))\Vert
 \nabla_z \log p(\phi(x))\Vert_2^2 \det(\nabla \phi(x))dx \\ =&\int
 p(\phi(x))\Vert \nabla \phi(x)^{-1}\nabla_x \log p(\phi(x))\Vert_2^2
 \det(\nabla \phi(x))dx \\ =&\int \discretized{\pi}_{t_0}(x)\Vert \nabla
 \phi(x)^{-1}\left(\nabla_x \log \discretized{\pi}_{t_0}(x)-\nabla \log
 \det(\nabla \phi(x))\right)\Vert_2^2dx \\ \leq&2 \int
 \discretized{\pi}_{t_0}(x)\Vert \nabla \phi(x)^{-1}\nabla_x \log
 \discretized{\pi}_{t_0}(x)\Vert_2^2dx \\ & \quad +2 \int
 \discretized{\pi}_{t_0}(x)\Vert \nabla \phi(x)^{-1}\nabla \log \det(\nabla
 \phi(x))\Vert_2^2dx \\ \leq& 8 \int \discretized{\pi}_{t_0}(x)\Vert \nabla_x
 \log \discretized{\pi}_{t_0}(x)\Vert_2^2dx+32 \stepsize^2d^2\hessianlip^2.
\end{align*}
For the last inequality, the first term is due to $\opnorm{\nabla
  \phi(x)^{-1}} \leq 2$, and the bound for second term can be derived
as:
\begin{align*}
\int \discretized{\pi}_{t_0}(x) \Vert \nabla \phi(x)^{-1}\nabla \log
\det(\nabla \phi(x)) \Vert_2^2 dx & = \int \discretized{\pi}_{t_0}(x)\Vert
\nabla \phi(x)^{-2} \nabla \cdot(I+ (k \stepsize-t_0)\nabla
\discretized{b}_{t_0}(x)) \Vert_2^2dx \\ & \leq 16 \stepsize^2d^2 \hessianlip^2.
\end{align*}


\subsubsection{Proof of Lemma~\ref{lemma-non-increasing-fisher-heat}}

Let $\phi_t$ be the density of $d$-dimensional Gaussian with mean 0 and covariance $t I_d$. Apparently $u_t = u_0 \star \phi_t$. Note that by Cauchy-Schwartz inequality,
\begin{align*}
    \int u_t \vecnorm{\nabla \log u_t}{2}^2 dx &= \int u_0 \star \phi_t (x) \vecnorm{\frac{\int \nabla \log u_0 (x - y) u_0 (x - y)\phi_t (y) dy}{\int u_0 (x - y) \phi_t (y) dy }}{2}^2 dx\\
    &\leq \int u_0 \star \phi_t (x) \frac{\int \vecnorm{ \nabla \log u_0 (x - y)}{2}^2 u_0 (x - y)\phi_t (y) dy}{\int u_0 (x - y) \phi_t (y) dy } dx\\
    &= \int \int \vecnorm{ \nabla \log u_0 (x - y)}{2}^2 u_0 (x - y)\phi_t (y) dy dx\\
    &= \int \vecnorm{\nabla \log u_0 (z)}{2}^2 u_0 (z) \left(\int \phi_t (y) dy \right) dz\\
    &= \int \vecnorm{\nabla \log u_0 (z)}{2}^2 u_0 (z) dz,
\end{align*}
which finishes the proof.


\subsection{Proof of Lemma~\ref{lemma-tail-strong-dissipative}}
\label{sec:prooflemma-tail-strong-dissipative}

In this section, we present the proof of the moment bound given in
Lemmas~\ref{lemma-tail-strong-dissipative}.  Let the process
$\interpolated{X}_t$ be the time-inhomogeneous diffusion process
defined by generator $- \inprod{\discretized{b}_t}{\nabla} +
\frac{1}{2}\Delta$, starting from $\discretized{X}_0$. By
Lemma~\ref{lemma-Fokker-Planck-for-interploated}, the one-time
marginal laws of $\interpolated{X}_t$ and $\discretized{X}_t$ are the
same. So we only need to show the moment bounds for
$\interpolated{X}_t$.

We first apply It\^{o}'s formula, for some $c>0$, and obtain:
\begin{multline*}
\frac{1}{2} e^{ct} \Vert \interpolated{X}_t \Vert_2^2-
\frac{1}{2}\Vert \interpolated{X}_0 \Vert_2^2 =\int_0^t \langle
\interpolated{X}_s,\discretized{b}(\interpolated{X}_s)e^{cs}\rangle
ds+\frac{d}{2}\int_0^t e^{cs}ds \\
 + \int_0^t e^{cs}\interpolated{X}_s^T dB_s+\frac{1}{2}\int_0^t c
 e^{cs}\Vert \interpolated{X}_s \Vert_2^2ds.
\end{multline*}
Letting $\discretized{M}_t \defn \int_0^t \interpolated{X}_s^Te^{cs}
dB_s$ be the martingale term, using the Burkholder-Gundy-Davis
inequality~\citep{MR0400380}, for $p \geq 4$, we have:
\begin{align*}
  \mathbb{E}\sup_{0 \leq t \leq T} |M_t|^{\frac{p}{2}} & \leq (p
  C)^{\frac{p}{4}}\mathbb{E}\langle M,M \rangle_T^{\frac{p}{4}}
  \\ \leq & (p C)^{\frac{p}{4}} \mathbb{E}\left(\int_0^T e^{2cs}\Vert
  \interpolated{X}_s \Vert_2^2ds \right)^{\frac{p}{4}} \\ & \leq
  (pC)^{\frac{p}{4}}\mathbb{E}\left(\sup_{0 \leq s \leq T}e^{cs}\Vert
  \interpolated{X}_s \Vert_2^2 \cdot \int_0^T e^{cs}ds
  \right)^{\frac{p}{4}} & \leq \left(\frac{Cp
    e^{cT}}{c}\right)^{\frac{p}{4}}\left(A+\frac{1}{A}\mathbb{E}\left(\sup_{0
    \leq t \leq T}e^{ct}\Vert \interpolated{X}_t \Vert_2^2
  \right)^{\frac{p}{2}}\right).
\end{align*}
Note that this bound holds for an arbitrary value of $A > 0$; we make
a specific choice later in the argument.  On the other hand, by
Assumption~\ref{assume-strong-dissipative}, we have:
\begin{align}
  \int_0^t \langle
  \interpolated{X}_s,\discretized{b}(\interpolated{X}_s)e^{cs}\rangle
  ds \leq \int_0^t \left(-\mu \Vert \interpolated{X}_s \Vert_2^2+\beta
  \right)e^{cs}ds.
\end{align}
Putting these bounds together with $c=2 \mu$, we find that
\begin{align*}
  \mathbb{E}\left(\sup_{0 \leq t \leq T}e^{2 \mu t}\Vert
  \interpolated{X}_t \Vert_2^2 \right)^{\frac{p}{2}}\leq&
  3^{\frac{p}{2}-1}\mathbb{E}\Vert X_0
  \Vert_2^p+3^{\frac{p}{2}-1}\mathbb{E}\sup_{0 \leq t \leq T}
  |M_t|^{\frac{p}{2}}\\ &+3^{\frac{p}{2}-1} \mathbb{E}\left(\sup_{0
    \leq t \leq T}\int_0^t \left(2 \langle
  \interpolated{X}_s,\discretized{b}(\interpolated{X}_s)\rangle+d+c
  \Vert \interpolated{X}_s \Vert_2^2 \right)e^{cs}ds
  \right)^{\frac{p}{2}}\\ \leq& (C
  \sigma_0^2pd)^{\frac{p}{2}}+\left(\frac{Cp e^{2 \mu
      T}}{\mu}\right)^{\frac{p}{4}}\left(A+\frac{1}{A}\mathbb{E}\left(\sup_{0
    \leq t \leq T}e^{2 \mu t}\Vert \interpolated{X}_t \Vert_2^2
  \right)^{\frac{p}{2}}\right)\\ &+3^{\frac{p}{2}-1}
  \mathbb{E}\left(\sup_{0 \leq t \leq T}\int_0^t \left(2 \beta+d
  \right)e^{2 \mu s}ds \right)^{\frac{p}{2}},
\end{align*}
for some universal constant $C>0$.

Setting $A=2 \left(\frac{C p e^{2 \mu T}}{\mu}\right)^{\frac{p}{4}}$
and substiuting into the inequality above, we find that
\begin{align}
  \left(\mathbb{E}\Vert \interpolated{X}_T \Vert_2^p
  \right)^{\frac{1}{p}}\leq e^{-\mu T}\left(\mathbb{E}\left(\sup_{0
    \leq t \leq T}e^{2 \mu t}\Vert \discretized{X}_t \Vert_2^2
  \right)^{\frac{p}{2}}\right)^{\frac{1}{p}}\leq C'\left(e^{-\mu
    T}\sigma_0 \sqrt{pd}+\sqrt{\frac{p}{\mu}}+\sqrt{\frac{2
      \beta+d}{\mu}}\right),
\end{align}
for universal constant $C'>0$, which proves the claim.


\subsection{Proof of Lemma~\ref{lemma:gradient-sum-bound-24-nonnegative}}
\label{sec:appendix-proof-lemma-24-grad-norm}

A key technical ingredient in the proof of
Lemma~\ref{lemma:moment-24-only-non-neg} is
Lemma~\ref{lemma:gradient-sum-bound-24-nonnegative}, which bound the
second and fourth moments of gradient along the path of the
Euler-Maruyama scheme with non-negative potential functions. It is
worth noticing that the exact cancellation needed in the proof only
happens with the first and second moment of average squared gradient
norm, which is exactly what we need for the proof of
Theorem~\ref{ThmWeakAssumption}.

\begin{proof}
  For each step of the algorithm, there is:
  \begin{align*}
    f (\discretized{X}_{(k + 1) \stepsize}) - f(\discretized{X}_{k
      \stepsize}) = & f(\discretized{X}_{k \stepsize} - \stepsize
    \nabla f(\discretized{X}_{k \stepsize}) + \sqrt{\stepsize} \xi_k)
    - f(\discretized{X}_{k \stepsize})\\ \overset{(i)}{\leq} & -
    \stepsize \vecnorm{\nabla f(\discretized{X}_{k \stepsize})}{2}^2 +
    \sqrt{\stepsize} \inprod{\nabla f(\discretized{X}_{k \stepsize})
    }{\xi_k} + \frac{\smooth}{2}\vecnorm{- \stepsize \nabla
      f(\discretized{X}_{k \stepsize}) + \sqrt{\stepsize}
      \xi_k}{2}^2\\ \overset{(ii)}{\leq} & - \frac{\stepsize}{2}
    \vecnorm{\nabla f(\discretized{X}_{k \stepsize})}{2}^2 +
    \sqrt{\stepsize} (1 - \stepsize \smooth) \inprod{\nabla
      f(\discretized{X}_{k \stepsize}) }{\xi_k} + \frac{\smooth
      \stepsize}{2} \vecnorm{\xi_k}{2}^2
        \end{align*}
where step (i) follows from Assumption~\ref{assume-lipschitz-drift}
and step (ii) uses the fact that $\stepsize \leq 1 / 2\smooth$
        
Summing them together, we obtain:
\begin{align*}
  f(\discretized{X}_0) \geq f (\discretized{X}_0) -
  f(\discretized{X}_{T}) \geq \frac{\stepsize}{2} \sum_{k = 0}^{T /
    \stepsize} \vecnorm{\nabla f(\discretized{X}_{k \stepsize})}{2}^2
  - \sqrt{\stepsize} (1 - \stepsize \smooth) \sum_{k = 0}^{T /
    \stepsize} \inprod{\nabla f (\discretized{X}_{k
      \stepsize})}{\xi_k} - \frac{\smooth \stepsize}{2} \sum_{k =
    0}^{T / \stepsize} \vecnorm{\xi_k}{2}^2.
\end{align*}
Taking expectations yields
\begin{align*}
  \stepsize \sum_{k = 0}^{T / \stepsize} \Exs \vecnorm{\nabla f (\discretized{X}_{k \stepsize})}{2}^2 \leq 2 \Exs f(\discretized{X}_0) + \smooth T d. 
\end{align*}
For the higher-order moment, note that:
\begin{align*}
  \Exs \left( \sum_{k = 0}^{T / \stepsize} \inprod{\nabla f
    (\discretized{X}_{k \stepsize})}{\xi_k} \right)^2 & = \sum_{k =
    0}^{T / \stepsize} \Exs \inprod{\nabla f (\discretized{X}_{k
      \stepsize})}{\xi_k}^2 + 2 \sum_{0 \leq k < j \leq T / \stepsize}
  \Exs \left(\inprod{\nabla f (\discretized{X}_{k \stepsize})}{\xi_k}
  \cdot \inprod{\nabla f (\discretized{X}_{j \stepsize})}{\xi_j}
  \right)\\
& = \sum_{k = 0}^{T / \stepsize} \Exs \vecnorm{\nabla
    f(\discretized{X}_{k \stepsize})}{2}^2 \leq \frac{2}{\stepsize}
  \Exs f(\discretized{X}_0) + \frac{ T \smooth d}{\stepsize},
\end{align*}
where the cross term is exactly 0 for $k < j$, because
\begin{align*}
  \Exs \left(\inprod{\nabla f (\discretized{X}_{k
      \stepsize})}{\xi_k} \cdot \inprod{\nabla f
    (\discretized{X}_{j \stepsize})}{\xi_j} \right) = &
  \Exs~ \Exs \left(\inprod{\nabla f (\discretized{X}_{k
      \stepsize})}{\xi_k} \cdot \inprod{\nabla f
    (\discretized{X}_{j \stepsize})}{\xi_j} | \mathcal{F}_{j
    \stepsize}\right) \\ = & \Exs \left(\inprod{\nabla f
    (\discretized{X}_{k \stepsize})}{\xi_k} \cdot
  \inprod{\nabla f (\discretized{X}_{j \stepsize})}{ \Exs
    (\xi_j | \mathcal{F}_{j \stepsize} )} \right) = 0.
\end{align*}

So we obtain:
\begin{align*}
  \Exs \left( \stepsize \sum_{k = 0}^{T / \stepsize} \vecnorm{\nabla f
    (\discretized{X}_{k \stepsize})}{2}^2 \right)^2 \leq & 12 \Exs
  f(\discretized{X}_0)^2 + 12 \stepsize (1 - \stepsize \smooth)^2
  \left( \sum_{k = 0}^{T / \stepsize} \inprod{\nabla
    f(\discretized{X}_{k \stepsize})}{\xi_k} \right)^2 + 3 \smooth^2
  \stepsize^2 \Exs \left( \sum_{k = 0}^{T / \stepsize}
  \vecnorm{\xi_k}{2}^2 \right)^2\\ \leq & 12 \Exs
  f(\discretized{X}_0)^2 + 24 \Exs f(\discretized{X}_0) + 12 T \smooth
  d + 9 \smooth^2 T^2 d^2,
\end{align*}
which completes the proof.
\end{proof}

    
\section{Proof of Corollary~\ref{cor:main}} 

In this section, we prove the bounds on the mixing time of the
unadjusted Langevin algorithm, as stated in Corollary~\ref{cor:main}.
Recall that $\gamma(x) \propto e^{-U(x)}$ and $b(x)=-\frac{1}{2}\nabla
U(x)$.  Using these representations, we can calculate the time
derivative of $\kull{\discretized{\pi}_t}{ \gamma}$ as follows:
\begin{align*}
  \frac{d}{dt}\kull{\discretized{\pi}_t}{\gamma} & = \int
  \frac{\partial \discretized{\pi}_t}{\partial t}(1+\log
  \discretized{\pi}_t-\log \gamma)\\
& = \int \left( - \nabla \cdot(\discretized{\pi}_t
  \discretized{b}_t)+\frac{1}{2}\Delta \discretized{\pi}_t
  \right)(1+\log \discretized{\pi}_t-\log \gamma) \\
& \stackrel{(i)}{=}  \int \left(\discretized{\pi}_t
  \discretized{b}_t - \frac{1}{2}\nabla \discretized{\pi}_t \right)^T
  (\nabla \log \discretized{\pi}_t-\nabla \log \gamma) \\ & = -
  \frac{1}{2}\int \discretized{\pi}_t \Vert \nabla \log
  \discretized{\pi}_t-\nabla \log \gamma \Vert_2^2 - \int
  \discretized{\pi}_t \langle \nabla \log \discretized{\pi}_t-\nabla
  \log \gamma, \discretized{b}+\frac{1}{2}\nabla U \rangle\\
 & \leq -\frac{1}{2}\int \discretized{\pi}_t \Vert \nabla \log
  \discretized{\pi}_t-\nabla \log \gamma \Vert_2^2 + \int
  \discretized{\pi}_t \Vert \nabla \log \discretized{\pi}_t-\nabla
  \log \gamma \Vert_2 \cdot \Vert \discretized{b}+\frac{1}{2}\nabla U
  \Vert_2 \\
  & \stackrel{(ii)}{\leq} - \frac{1}{4}\int \discretized{\pi}_t \Vert
  \nabla \log \discretized{\pi}_t-\nabla \log \gamma \Vert_2^2+\int
  \discretized{\pi}_t \Vert \discretized{b}_t(x) - b(x) \Vert_2^2 dx,
\end{align*}
where step (i) follows from the divergence theorem, and step (ii)
follows from Young's inequality (that is, $a b \leq a^2 / 4 +
b^2$).  
 Step (i) justified by Lemma~\ref{lemma-green-formula} in
Appendix~\ref{sect:appendix-coarse-estimates}, where the exponential
tail condition directly follows Lemma~\ref{lemma-coarse-estimate}.

The first term gives a minus KL term based on log-Sobolev inequality,
whereas the second term corresponds to what we have estimated in
previous sections. Using same type of analysis, we find that
\begin{multline*}
  \kull{\discretized{\pi}_T}{\gamma} \leq e^{-\frac{1}{4}\rho
    T}\kull{\discretized{\pi}_0}{\gamma} +
  \stepsize^4\hessianlip^2(d^2+A_0^4+\smooth^4(\sigma^2d+\frac{1}{\mu}+\beta)^2)
  \\
  + \stepsize^2 \left(h_0 + A_0^2 + T( \sigma_0^2 d + \frac{1}{\mu} +
  \beta) (\sigma^{-2} + \smooth^2) + \hessianlip^2 d^2 \right),
\end{multline*}
where $\rho$ is the log-Sobolev constant. Regarding all the smoothness
parameters as constants, and using the codnition
$\kull{\discretized{\pi}}{\gamma}\leq \varepsilon$, we find that
\begin{align}
 N = \tilde{O} \left( \frac{d}{\rho} \left(\frac{1}{\rho
   \varepsilon}\right)^{\frac{1}{2}} \right).
\end{align}
In order to translate this result into TV distance, we simply apply
Pinsker's inequality. In order to otbtain Wasserstein distance bound,
we can use the Talagrand transportation
inequality~\citep{talagrand1991new,otto2000generalization}.
\begin{align*}
 \Wass_2 ( \discretized{\pi}_T, \gamma ) \leq \sqrt{ \frac{2}{\rho}
   \kull { \discretized{\pi}_T}{ \gamma }}.
\end{align*}
Since the log-Sobolev constant is potentially large, we can also use
the weighted Csisz\'{a}r-Kullback-Pinsker inequality
of~\citet{bolley2005weighted} to relate the KL divergence to the
Wasserstein $\Wass_1$ distance.  In particular, for any choice of
$\gamma$ such that
\begin{subequations}
\begin{align}
  \label{eq:cstvil}
C_\gamma \defn 2 \inf_{\alpha > 0 } \left( \frac{1}{2\alpha} \left( 1
+ \log \int e^{ \alpha \Vert x\Vert_2^2}
d\gamma(x)\right)\right)^{\frac{1}{2}} < +\infty
\end{align}
is finite, we have
\begin{align}
  \Wass_1 ( \discretized{\pi}_T, \gamma)\leq C_\gamma
  \sqrt{\kull { \discretized{\pi}_T}{ \gamma }},
\end{align}
\end{subequations}

We compute here an upper bound on the constant $C_\gamma$ for
stationary distribution $\gamma$. Note that the proof of
Lemma~\ref{lemma-tail-strong-dissipative} in
Section~\ref{sec:prooflemma-tail-strong-dissipative} also goes through
for the Langevin diffusion itself, which converges asymptotically to
$\gamma$. Therefore adapting Lemma~\ref{lemma-tail-strong-dissipative}
to the Langevin process $X_t$ we obtain for any $p>1$:
\begin{align*}
\int_{\real^d} \Vert x\Vert_2^p d \gamma (x) = \limsup_{T \rightarrow
  +\infty } \mathbb{E} \Vert X_t\Vert_2^p \leq \left( C'\frac{p +
  \beta + d}{\mu}\right)^{\frac{p}{2}}.
\end{align*}
Expanding $e^{ \alpha \Vert x\Vert_2^2}$ for $\alpha = \frac{ \mu }{ 8
  C' e}$ in a Taylor series, we find that
\begin{align*}
    \int e^{ \alpha \Vert x\Vert_2^2} d\gamma(x) \leq & 1+
    \sum_{p=1}^{+\infty} \frac{1}{p!} \int_{\real^d} \alpha^p \Vert
    x\Vert_2^{2p} d \gamma (x) \\ \stackrel{(i)}{\leq }& 1 +
    \sum_{p=1}^{+\infty} \frac{1}{p!} \left( \frac{2p + \beta + d}{ 8
      e } \right)^p \leq 1 + \sum_{ p = 1}^{+\infty} \frac{1}{p!}
    \left( \frac{ p }{ 2 e } \right)^p + \sum_{ p = 1}^{+\infty}
    \frac{1}{p!} \left( \frac{\beta + d }{ 4 e } \right)^p
    \\ \stackrel{ (ii) }{ \leq}& 1 + \sum_{ p = 1}^{+\infty}
    \frac{1}{\sqrt{2 \pi p}} \left( \frac{e}{p} \right)^p \left(
    \frac{ p }{ 2 e } \right)^p + \sum_{ p = 1}^{+\infty} \frac{1}{p!}
    \left( \frac{\beta + d }{ 4 e } \right)^p \\ \leq &1+
    \sum_{p=1}^{+\infty} \left( 2^{-p} + \frac{1}{p!} (\beta + d)^p
    \right) = 1+ e^{ \beta + d}.
\end{align*}
In obtaining step (i), we plug in the moment estimate for $X$ under
$\gamma$, and in step (ii), we use the Stirling's lower bound on the
factorial function.  Plugging into equation~\eqref{eq:cstvil}, we
obtain $C_\gamma \leq C' \sqrt{ \frac{ \beta + d }{ \mu }}$ for some
universal constant $C'>0$.


\section{Coarse Tail and Smoothness Control}
\label{sect:appendix-coarse-estimates}

First, we state and prove a lemma that gives bounds on the behavior of
the densities $\pi$ and $\discretized{\pi}$ defined by the
Fokker-Planck equations~\eqref{eq:fokker_langevin}
and~\eqref{eq:fokker_inter}, respectively.

\begin{lemma}
\label{lemma-coarse-estimate}
For the densities defined by Fokker-Planck
equations~\eqref{eq:fokker_langevin} and~\eqref{eq:fokker_inter}, the
following bounds hold for all $x \in \real^d$:
\begin{subequations}
  \begin{align}
    \label{EqnDensityBound}
  \max(\discretized{\pi}(x),\pi(x)) &\leq A e^{-r \Vert x \Vert_2^2},
  \\
  \label{EqnScoreBound}
  \max \left \{ \Vert \nabla \log \discretized{\pi}(x) \Vert_2, \Vert
  \nabla \log \pi(x) \Vert_2 \right \} & \leq C (1 + \Vert x \Vert_2),
  \quad \mbox{and} \\
  \label{EqnFisherBound}
\max \left \{ \opnorm{ \nabla^2 \log \discretized{\pi}(x)} , \opnorm{
  \nabla^2 \log \pi(x)}\right \} & \leq C (1 + \Vert x \Vert_2^2).
\end{align}
\end{subequations}
Here the constants $(A, r, C)$ are independent of $x$, uniform in an
arbitrarily small neighborhood of $t$, but may dependent on other
parameters of the diffusions.
\end{lemma}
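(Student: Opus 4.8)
The plan is to reduce all three bounds to the explicit short-time Gaussian convolution structure of the interpolated density, and to treat the genuine Langevin marginal $\pi_t$ via the classical parabolic heat-kernel estimates for $\tfrac12\Delta-b\cdot\nabla$ with Lipschitz drift. Fix $t$ and let $k$ be the integer with $t\in[k\stepsize,(k+1)\stepsize]$. Conditionally on $\discretized{\mathcal F}_{k\stepsize}$ the interpolated process is a Brownian motion with constant drift, so for $t>k\stepsize$
\begin{align*}
\discretized{\pi}_t(x)=\int_{\real^d}\discretized{\pi}_{k\stepsize}(y)\,q_t(x\mid y)\,dy,\qquad
q_t(x\mid y):=\bigl(2\pi(t-k\stepsize)\bigr)^{-d/2}\exp\!\Bigl(-\tfrac{\|x-y-(t-k\stepsize)b(y)\|_2^2}{2(t-k\stepsize)}\Bigr).
\end{align*}
I would first prove, by induction on $k$, two-sided Gaussian bounds $c\,e^{-R\|x\|_2^2}\le\discretized{\pi}_{k\stepsize}(x)\le A\,e^{-r\|x\|_2^2}$ at the grid points, with constants allowed to deteriorate by a fixed factor per step (hence they may depend on $k$, equivalently $T$, which the statement permits). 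The base case is the initialization: the lower bound is exactly Assumption~\ref{assume-smooth-initialize}, and the matching upper bound holds for the initializations under consideration (e.g.\ the standard Gaussian), and the same for bounds~\eqref{EqnScoreBound}--\eqref{EqnFisherBound} at $t=0$. For the inductive step, since $b$ is $L$-Lipschitz the map $y\mapsto y+(t-k\stepsize)b(y)$ is bi-Lipschitz for $\stepsize<1/(2L)$. The upper bound follows by splitting the $y$-integral into the region $\{\|y\|_2\gtrsim\|x\|_2\}$, where $\discretized{\pi}_{k\stepsize}(y)$ is already exponentially small, and its complement, where $\|y+(t-k\stepsize)b(y)\|_2$ is bounded away from $\|x\|_2$ so $q_t(x\mid y)$ is exponentially small once $\|x\|_2$ is large; the lower bound follows by restricting the integral to a ball of radius $\asymp\sqrt{t-k\stepsize}$ around the preimage $y_x$ of $x$, on which both factors are bounded below, using the inductive Gaussian lower bound and $\|y_x\|_2\le 2\|x\|_2+C$.

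Given the two-sided bounds, the derivative estimates follow by differentiating the convolution under the integral. A direct computation gives, for $t\in(k\stepsize,(k+1)\stepsize]$,
\begin{align*}
\nabla_x\log\discretized{\pi}_t(x)
=-\frac{1}{t-k\stepsize}\,\Exs\!\Bigl[\int_{k\stepsize}^t d\discretized{B}_s\ \Big|\ \discretized{X}_t=x\Bigr]
=-\frac{1}{t-k\stepsize}\Bigl(x-\Exs\bigl[\discretized{X}_{k\stepsize}+(t-k\stepsize)b(\discretized{X}_{k\stepsize})\ \big|\ \discretized{X}_t=x\bigr]\Bigr),
\end{align*}
so it suffices to show $\Exs[\|\discretized{X}_{k\stepsize}\|_2\mid\discretized{X}_t=x]\le C(1+\|x\|_2)$ and that this difference is of order $(t-k\stepsize)(1+\|x\|_2)$. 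By Bayes' rule the conditional moment equals $\int\|y\|_2\,\discretized{\pi}_{k\stepsize}(y)q_t(x\mid y)\,dy\big/\discretized{\pi}_t(x)$; the numerator is controlled via the Gaussian \emph{upper} bound on $\discretized{\pi}_{k\stepsize}$ and the concentration of $q_t(x\mid\cdot)$ near $y_x$, and the denominator via the Gaussian \emph{lower} bound on $\discretized{\pi}_t$ just established. Equivalently, one can integrate by parts exactly as in the proof of Lemma~\ref{lemma-i1-regularity} to write $\nabla_x\log\discretized{\pi}_t(x)=\Exs[(I+(t-k\stepsize)\nabla b(\discretized{X}_{k\stepsize}))^{-\top}\nabla\log\discretized{\pi}_{k\stepsize}(\discretized{X}_{k\stepsize})+(\text{lower order})\mid\discretized{X}_t=x]$ and invoke the inductive bound $\|\nabla\log\discretized{\pi}_{k\stepsize}(y)\|_2\le C(1+\|y\|_2)$ together with the conditional-moment estimate. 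Differentiating once more gives $\nabla^2_x\log\discretized{\pi}_t=\nabla^2_x\discretized{\pi}_t/\discretized{\pi}_t-(\nabla_x\log\discretized{\pi}_t)^{\otimes2}$, whose first term equals $(t-k\stepsize)^{-2}\Exs[(\int_{k\stepsize}^t d\discretized{B}_s)^{\otimes2}\mid\discretized{X}_t=x]-(t-k\stepsize)^{-1}I$ and is handled by the same Bayes estimate applied to $\Exs[\|\discretized{X}_{k\stepsize}\|_2^2\mid\discretized{X}_t=x]\le C(1+\|x\|_2^2)$, while the second term is $O((1+\|x\|_2)^2)$ by the gradient bound; Assumptions~\ref{assume-lipschitz-drift} and~\ref{assume-smooth-drift} enter only through differentiating the $b$-dependence in $q_t$, contributing lower-order corrections.

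For the genuine diffusion $\pi_t$ the same three bounds follow from the classical Gaussian two-sided heat-kernel estimates—and their first and second spatial derivative analogues—for $\tfrac12\Delta-b\cdot\nabla$ with Lipschitz $b$, composed with the Gaussian tails of $\pi_0$. The asserted uniformity in a neighborhood of $t$ is then immediate: every estimate above is continuous in $t$ on each half-open interval $(k\stepsize,(k+1)\stepsize]$, so its constants are uniform on compact subintervals, and at a grid point $t=k\stepsize$ the bounds hold from the left with the $k$-th interval's constants and from the right by the induction, so the worse of the two gives a uniform bound on a two-sided neighborhood. I expect the main obstacle to be the Gaussian \emph{lower} bound on the density: the Bayes representations for $\nabla\log\discretized{\pi}_t$ and $\nabla^2\log\discretized{\pi}_t$ divide by $\discretized{\pi}_t(x)$, so without a matching lower bound the conditional moments of $\discretized{X}_{k\stepsize}$ cannot be controlled. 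For the interpolated process this is a clean consequence of propagating Assumption~\ref{assume-smooth-initialize} through the Gaussian convolutions as above; for $\pi_t$ it requires the Aronson-type lower bound for non-degenerate uniformly parabolic equations with Lipschitz drift, with care taken to keep all constants finite on the relevant time neighborhood.
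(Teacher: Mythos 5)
Your proposal is correct in substance, and its skeleton---induction over the grid points, exploiting that each step of the interpolation is a Gaussian kernel with $y$-dependent mean $y+(t-k\stepsize)b(y)$, with constants allowed to deteriorate by a fixed factor per step---is the same as the paper's. Where you genuinely diverge is the mechanism for the derivative bounds. The paper factors each step as the bi-Lipschitz pushforward $\phi(y)=y+(t-k\stepsize)b(y)$ followed by convolution with an isotropic Gaussian $\mu$, propagates \eqref{EqnScoreBound}--\eqref{EqnFisherBound} through the pushforward by the chain rule (the $\nabla\log\det\nabla\phi$ terms being bounded via Assumptions~\ref{assume-lipschitz-drift} and~\ref{assume-smooth-drift}), and through the convolution by the ratio estimate $\Vert\nabla\log(p*\mu)\Vert_2\leq \int\Vert\nabla\log p\Vert_2\,p\,\mu\,/\int p\,\mu$, whose denominator is lower-bounded crudely by placing half the mass of $p$ in a fixed ball; no Gaussian lower bound on the density itself is ever needed. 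You instead keep the one-step kernel whole and express $\nabla\log\discretized{\pi}_t$ and $\nabla^2\log\discretized{\pi}_t$ through backward conditional moments of $\discretized{X}_{k\stepsize}$, which forces you to carry a matching Gaussian \emph{lower} bound on $\discretized{\pi}_{k\stepsize}$ through the induction. That is a heavier ingredient, but it is available (Assumption~\ref{assume-smooth-initialize} is exactly a Gaussian lower bound at $t=0$, and it propagates through the bi-Lipschitz-plus-convolution step as you describe, with constants uniform in $t-k\stepsize\in(0,\stepsize]$), and it buys cleaner identities: your integration-by-parts representation of $\nabla\log\discretized{\pi}_t$ is the same computation as Lemma~\ref{lemma-i1-regularity} and correctly removes the $(t-k\stepsize)^{-1}$ singularity, which your first, explicit conditional-expectation formula alone would not---this matters because the constants must be uniform in a time-neighborhood that may touch a grid point, and for \eqref{EqnFisherBound} you should likewise integrate by parts rather than rely on the $(t-k\stepsize)^{-2}$ identity. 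Two caveats, both at parity with the paper rather than against you: your base case assumes a Gaussian upper bound and log-derivative bounds on $\pi_0$, which Assumption~\ref{assume-smooth-initialize} does not literally supply (the paper's base case has the same gap); and for the true marginal $\pi_t$ you invoke Aronson-type two-sided kernel estimates, which classically assume bounded coefficients, whereas $b$ here grows linearly, so the citation needs the variants for Lipschitz unbounded drift---no less precise than the paper's bare appeal to classical results.
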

\noindent We split the proof into different parts, corresponding to
the different bounds claimed.

\paragraph{Proof of equation~\eqref{EqnDensityBound}:}

The claim for the Fokker-Planck equation of the original SDE is a
classical result~\citep[see, e.g.,][]{Pav14}. For the density
$\discretized{\pi}_t(x)$, we exploit the properties of the underlying
discrete-time update from which it arose; in particular, we prove the
claimed bound~\eqref{EqnDensityBound} via induction on the index $k$.
For $k=0$, the density $\discretized{\pi}_0$ satisfies the claimed
bound by Assumption~\ref{assume-smooth-initialize}.

Suppose that the bound~\eqref{EqnDensityBound} holds for $t=k
\stepsize$; we need to prove that it also holds for any any $t \in [k
  \stepsize,(k+1)\stepsize]$. Given this value of $t$ fixed, by the
definition of interpolated process, $\discretized{\pi}_t$ is the
consequence of push-forward measure under a non-linear transformation
on $\discretized{\pi}_{k \stepsize}$, with a Gaussian noise convoluted
with it. Suppose $Z=\phi(X_{k \stepsize})\defn \discretized{X}_{k
  \stepsize}+(t-k \stepsize)b(\discretized{X}_{k \stepsize})$ and let
$p(\cdot)$ be its density, and suppose $\stepsize<\frac{1}{2\smooth}$,
by the change of variable formula, we have
\begin{align}
    p(z) = \frac{\discretized{\pi}_{k \stepsize} (\phi^{-1}(z))}{\det
      \left((\nabla \phi)(\phi^{-1}(z))\right)}.\label{eq:defp}
\end{align}
Using Assumption~\ref{assume-lipschitz-drift} and the induction
hypothesis, we obtain the following rough bounds:
\begin{align*}
  &\discretized{\pi}_{k \stepsize}(\phi^{-1}(z))\leq Ae^{-r \Vert
    \phi^{-1}(z)\Vert_2^2}\leq Ae^{-r \Vert 3z/2 \Vert_2^2},\\ &\det
  \left((\nabla \phi)(\phi^{-1}(z))\right)\geq \frac{1}{2^d}.
\end{align*}
Therefore the density $p$ also satisfies~\eqref{EqnDensityBound} (The
constant can increase with time, but we only need it to be bounded for
finite number of steps and do not require explicit bound.)

The convolution with the Gaussian density is easy to control. Let
$\mu$ be the density of $\mathcal{N}(0,t-k \stepsize)$, we have:
\begin{align*}
  \discretized{\pi}_t(x)=p*\mu(x)=\int p(y)\mu(x-y)dy \leq \int Ae^{-r
    \Vert y \Vert_2^2}(2 \pi(t-k
  \stepsize))^{-\frac{d}{2}}e^{-\frac{\Vert x-y \Vert_2^2}{2(t-k
      \stepsize)}}dy \leq A'e^{-r'\Vert x \Vert_2^2},
\end{align*}
which completes the inductive proof.

\paragraph{Proof of the bound~\eqref{EqnScoreBound}:} The claim for $\pi_t$ is a
classical result~\citep[see, e.g.,][]{Pav14}. It remains to prove the
result for the interpolated process.  As before, we proceed via
induction.

Beginning with the base case $k = 0$, assume the result holds true for
$\discretized{\pi}_0$.  Defining $\phi$, $Z$ and $p$ as in the proof
of equation~\eqref{EqnDensityBound}, we observe that
\begin{align*}
\nabla \log p(z) =\nabla_z \log \discretized{\pi}_{k
  \stepsize}(\phi^{-1}(z))-\nabla_z \log \det \left((\nabla
\phi)(\phi^{-1}(z))\right).
\end{align*}
The first term can be controlled easily using induction hypothesis:
\begin{align*}
  \Vert \nabla_z \log \discretized{\pi}_{k \stepsize} ( \phi^{-1} (z)
  ) \Vert_2 \leq \Vert \nabla_z \phi^{-1} (z) \Vert_2 \cdot C(1 +
  \Vert \phi^{-1} (x) \Vert_2 ) \leq 2C (1 + 2 \Vert x \Vert_2 ).
\end{align*}
For the second term, note that:
\begin{align*}
   \Vert \nabla_z \log \det \left((\nabla \phi)(\phi^{-1}(z))\right)
   \Vert_2 \leq d \opnorm{ (\nabla \phi (\phi^{-1}(z)) )^{-1}} \cdot
   \Big(\sum_{i=1}^d \opnorm{ \frac{\partial }{\partial z_i} \nabla
     \phi (\phi^{-1 } (z)) }\Big) \leq 4d^2 \hessianlip,
\end{align*}
where we used Assumption~\ref{assume-lipschitz-drift} and
Assumption~\ref{assume-smooth-drift}.

For convolution with Gaussian density, we have:
\begin{align*}
  \Vert \nabla \log(p*\mu)(x)\Vert_2 = \left \Vert \nabla_x \log \int
  p(x-y) \mu(y)dy \right \Vert_2 &\leq \frac{\int \Vert \nabla \log
    p(x-y) \Vert_2 p(x-y) \mu(y) dy}{\int p(x-y) \mu(y)dy} \\ & \leq 2
  C \frac{\int (1+\Vert y \Vert_2) p(y)\mu(x-y)dy}{\int p(y)
    \mu(x-y)dy}.
  \end{align*}
  
As we have shown $p(y)$ decays with $e^{-r\Vert y\Vert_2^2}$ as $\Vert
y\Vert_2\rightarrow +\infty$. And for this fixed density function,
there exists a constant $A>0$ such that $p(\mathcal{B}(0,A)) \geq
\frac{1}{2}$. So we have:
\begin{align*}
  \int_{\mathcal{B} (0, A)}  p(y) \mu( x - y) dy \geq C \exp( - (\Vert x\Vert_2^2 +A^2) ).
\end{align*}
Since both $p$ and $\mu$ have tail decaying with $e^{- r \Vert
  x\Vert_2^2}$, there exists $K > 0$, such that:
\begin{align*}
  \int_{\mathcal{B} (0, nK (A + \Vert x\Vert_2))^C} p(y)\mu(x-y)dy
  \leq \exp( - (\Vert x\Vert_2^2 + A^2 + n)).
\end{align*}
Putting them together, we obtain:
\begin{align*}
 & \frac{\int (1+\Vert y \Vert_2) p(y)\mu(x-y)dy}{\int
    p(y)\mu(x-y)dy}\\ \leq & K ( A+ \Vert x\Vert_2 ) + \sum_{ n = 1}^{
    + \infty} C e^{ \Vert x\Vert_2^2 +A^2} (n+1)K (A + \Vert x\Vert_2)
  \int_{\mathcal{B} (0, nK (A + \Vert x\Vert_2))^C} p(y)\mu(x-y)dy
  \\ \leq & C_1 + C_2 \Vert x \Vert_2,
\end{align*}
which finishes the induction proof.


\paragraph{Proof of the bound~\eqref{EqnFisherBound}:}

The result of the Fokker-Planck equation for the original SDE is known
as well~\cite{Pav14}. Now we show the result for the interpolated
process. Once again, we proceed by induction. The result for the
initial distribution is assumed to be true. As in the proof of the
bound~\eqref{EqnDensityBound}, we consider the auxiliary variable $Z$
with the density function $p$ defined in \eqref{eq:defp}. Taking one
more derivative, we obtain:
\begin{align*}
  \nabla^2 \log p(z) =& \nabla^2_z \log \discretized{\pi}_{k
    \stepsize}(\phi^{-1}(z))-\nabla^2_z \log \det \left((\nabla
  \phi)(\phi^{-1}(z))\right)\\ = & (\nabla_z \nabla_z \phi^{-1} (z))
  \cdot \nabla_z \log \discretized{\pi}_{k\stepsize} + \nabla_z
  \phi^{-1} (z) \nabla^2_z \log \discretized{\pi}_{k \stepsize} \\ &-
  \nabla_z (\nabla \phi (\phi^{-1} (z)))^{-1}\cdot \nabla_z (\nabla
  \phi (\phi^{-1} (z))) - (\nabla \phi (\phi^{-1} (z)))^{-1} \cdot
  \nabla_z^2 (\nabla \phi (\phi^{-1} (z))).
\end{align*}
In order to bound $\opnorm{\nabla^2 \log p(z)}$, we directly control
the tensor norm of each tensor appearing in the above expression. We
bound the first and second terms using the induction hypothesis, which
leads to terms that grow linearly with $\Vert x\Vert$. The rest of the
terms are controlled simply by uniform upper bounds on the
higher-order smoothness of $b$, at a price of additional dimension
factors.

For convolution with Gaussian, note that:
\begin{align*}
  \opnorm{ \nabla_z^2 \log (p * \mu) (x) } = \opnorm{ \frac{
      \nabla^2(p * \mu) } { p * \mu } (x) } + \opnorm{\frac{(\nabla
      p* \mu )(\nabla p* \mu )^T}{ (p * \mu)^2 } (x)}.
\end{align*}
The second term is actually $\Vert \frac{\nabla p* \mu }{ p * \mu
}\Vert_2^2$, so it is already controlled by $C (1 + \Vert x
\Vert_2^2)$ ( following~\eqref{EqnScoreBound}). For the first term,
note that:
\begin{align*}
    \opnorm{ \frac{ \nabla^2(p * \mu) } { p * \mu } (x) } = &\opnorm{
      \frac{ \int \nabla \log p (y) \nabla \log \mu (x - y) p (y) \mu
        ( x - y) dy} { p * \mu (x)} }\\ \leq & C \frac{\int (1 + \Vert
      x\Vert_2^2+\Vert y \Vert_2^2) p(y)\mu(x-y)dy}{\int
      p(y)\mu(x-y)dy}.
\end{align*}
Since we have shown that the tail of $p(x)$ decays as $e^{-r \Vert
  x\Vert_2^2}$, the same argument as in the proof
of~\eqref{EqnFisherBound} also holds for this integral, and the term
$\opnorm{ \frac{ \nabla^2(p * \mu) } { p * \mu } (x) }$ is upper
bounded by $C ( 1 + \Vert x\Vert_2^2)$ for constant $C$ independent of
$x$. This finishes the induction proof.

Lemma~\ref{lemma-coarse-estimate} can be combined with the following
lemma to justify the Green formula used throughout the paper:
\begin{lemma}
  \label{lemma-green-formula}
  For functions $f, g: \real^d\rightarrow \real$ with continuous
  gradients, if there exists constants $C, r>0$ such that $\max (|
  f(x) g(x) |, \Vert \nabla f(x) g(x)\Vert_2 , \Vert f(x) \nabla
  g(x)\Vert_2 )\leq C e^{- r \Vert x\Vert_2^2}$, we have:
  \begin{align*}
    \int_{\real^d} f(x) \nabla g(x) dx = -\int_{\real^d}
    \nabla f(x) g(x) dx.
    \end{align*}
\end{lemma}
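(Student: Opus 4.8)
The plan is to reduce the claim to the classical Gauss--Green theorem on Euclidean balls and then let the radius tend to infinity, using the Gaussian decay to annihilate the boundary term.

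First I would fix a coordinate index $i \in \{1,\dots,d\}$ and a radius $R>0$. Since $f$ and $g$ have continuous gradients, the product $fg$ belongs to $C^1(\overline{\ball(0,R)})$, so the componentwise divergence theorem on the ball $\ball(0,R)$ gives
\[
  \int_{\ball(0,R)} \partial_i (fg)(x)\, dx = \int_{\partial \ball(0,R)} (fg)(x)\, \nu_i(x)\, dS(x),
\]
where $\nu$ is the outward unit normal and $dS$ the surface measure. Writing $\partial_i(fg) = (\partial_i f)\,g + f\,(\partial_i g)$ and collecting the $d$ coordinates, this is the vector identity
\[
  \int_{\ball(0,R)} \bigl( (\nabla f)(x)\,g(x) + f(x)\,(\nabla g)(x) \bigr)\, dx = \int_{\partial \ball(0,R)} (fg)(x)\, \nu(x)\, dS(x).
\]

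Next I would send $R \to \infty$ in both sides. On the right, the hypothesis $|f(x)g(x)| \le C e^{-r \Vert x\Vert_2^2}$ gives $|f(x)g(x)| \le C e^{-rR^2}$ for $x \in \partial \ball(0,R)$, while the surface measure of $\partial\ball(0,R)$ equals $\omega_{d-1} R^{d-1}$ for a dimensional constant $\omega_{d-1}$; hence the boundary integral has Euclidean norm at most $\omega_{d-1}\, C\, R^{d-1} e^{-rR^2}$, which tends to $0$. On the left, the remaining hypotheses $\Vert (\nabla f)(x)\,g(x)\Vert_2 \le C e^{-r\Vert x\Vert_2^2}$ and $\Vert f(x)\,(\nabla g)(x)\Vert_2 \le C e^{-r\Vert x\Vert_2^2}$ show that both integrands are absolutely integrable over $\real^d$, the Gaussian $e^{-r\Vert x\Vert_2^2}$ being integrable; so by dominated convergence the integral over $\ball(0,R)$ converges to the integral over $\real^d$. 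Passing to the limit therefore yields $\int_{\real^d}(\nabla f)\,g + \int_{\real^d} f\,\nabla g = 0$, which is exactly the asserted identity.

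The only point that needs a word of care is invoking Gauss--Green with a merely $C^1$ integrand on the ball; this is standard, but if one prefers to avoid any boundary bookkeeping one can instead choose a smooth cutoff $\chi_R$ that equals $1$ on $\ball(0,R)$, is supported in $\ball(0,2R)$, and satisfies $\Vert \nabla \chi_R\Vert_\infty \le c/R$. Then $\chi_R fg$ is $C^1$ with compact support, so $\int_{\real^d}\nabla(\chi_R fg) = 0$ exactly; expanding $\nabla(\chi_R fg) = \chi_R\bigl((\nabla f)g + f\nabla g\bigr) + fg\,\nabla \chi_R$ and rearranging gives $\int_{\real^d}\chi_R\bigl((\nabla f)g + f\nabla g\bigr) = -\int_{\real^d} fg\,\nabla\chi_R$, whose right-hand side has norm at most $(c/R)\,\omega_d\,(2R)^d\, C e^{-rR^2} \to 0$ (here $\omega_d$ denotes the volume of the unit ball), while the left-hand side converges to $\int_{\real^d}\bigl((\nabla f)g + f\nabla g\bigr)$ by dominated convergence. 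Either route delivers the lemma, and I expect no real obstacle beyond this routine truncation.
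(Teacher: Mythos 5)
Your proof is correct and follows essentially the same route as the paper: apply the divergence (Green) theorem on the ball $\ball(0,R)$, bound the boundary flux by $C R^{d-1} e^{-rR^2} \to 0$ using the Gaussian decay of $fg$, and pass to the limit in the interior integrals via integrability of the Gaussian-dominated integrands. The additional cutoff-function variant you sketch is a harmless alternative but not needed.
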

\begin{proof}
 The integratability is justified by the tail assumption on $g\nabla f
 $ and $f \nabla g$. For any $R>0$, using the Green formula for
 bounded set, we have:
 \begin{align*}
   \int_{\mathcal{B} (0, R) } f(x) \nabla g(x) dx =
   \oint_{\partial \mathcal{B} (0, R)} f(x) g(x) \nu dS -
   \int_{\mathcal{B} (0, R) } f(x) \nabla g(x) dx,
 \end{align*}
 where $\nu$ is the normal vector of the boundary at $x$. Note that:
\begin{align*}
  \Big\Vert \oint_{\partial \mathcal{B} (0, R)} f(x) g(x) \nu dS
  \Big\Vert_2 \leq \oint_{\partial \mathcal{B} (0, R)} |f(x) g(x)|
  dS \leq C e^{-r R^2} C' R^{d-1}.
\end{align*}
Letting $R \rightarrow +\infty$, we obtain:
\begin{align*}
  \int_{\real^d} f(x) \nabla g(x) dx =& \lim_{R \rightarrow
    +\infty} \int_{\mathcal{B} (0, R) } f(x) \nabla g(x) dx \\ =&
  \lim_{R \rightarrow +\infty} \oint_{\partial \mathcal{B} (0, R)}
  f(x) g(x) \nu dS - \lim_{R \rightarrow +\infty} \int_{\mathcal{B}
    (0, R) } f(x) \nabla g(x) dx \\ =& -\int_{\real^d} \nabla
  f(x) g(x) dx.
    \end{align*}
\end{proof}
\begin{remark}
  \label{remark-integration-by-parts}
Combining Lemma~\ref{lemma-coarse-estimate} and
Lemma~\ref{lemma-green-formula}, we can justify the divergence
theorems used throughout this paper. Specifically, when one of the
functions $f$ and $g$ is in the form $\pi \cdot\mathrm{poly} (\nabla
\log \pi, \nabla \log \discretized{\pi}, b, \discretized{b})$ or
$\discretized{\pi} \cdot \mathrm{poly}(\nabla \log \pi, \nabla \log
\discretized{\pi}, b, \discretized{b})$, while the other one is of the
form $\mathrm{poly}(\nabla \log \pi, \nabla \log \discretized{\pi}, b,
\discretized{b})$, the integration by parts can go through.
\end{remark}




\bibliographystyle{plainnat} \bibliography{LangevinKL}

\end{document}